\newcommand{\+}{\protect\nobreakdash-}
\renewcommand{\:}{\colon}
\newcommand{\rarrow}{\longrightarrow}
\newcommand{\lrarrow}{\mskip.5\thinmuskip\relbar\joinrel\relbar\joinrel
 \rightarrow\mskip.5\thinmuskip\relax}
\DeclareMathOperator{\Ext}{Ext}
\DeclareMathOperator{\Hom}{Hom}
\DeclareMathOperator{\Gen}{Gen}
\DeclareMathOperator{\Rel}{Rel}
\DeclareMathOperator{\Ind}{\mathsf{Ind}}
\DeclareMathOperator{\Fun}{\mathsf{Fun}}
\DeclareMathOperator{\Fil}{\mathsf{Fil}}
\DeclareMathOperator{\rk}{rk}
\DeclareMathOperator{\im}{im}
\DeclareMathOperator{\coker}{coker}
\newcommand{\FP}{\mathrm{FP}}
\newcommand{\sA}{\mathsf A}
\newcommand{\sB}{\mathsf B}
\newcommand{\sC}{\mathsf C}
\newcommand{\sD}{\mathsf D}
\newcommand{\sE}{\mathsf E}
\newcommand{\sF}{\mathsf F}
\newcommand{\sJ}{\mathsf J}
\newcommand{\sK}{\mathsf K}
\newcommand{\sL}{\mathsf L}
\newcommand{\sM}{\mathsf M}
\newcommand{\sN}{\mathsf N}
\newcommand{\sP}{\mathsf P}
\newcommand{\sS}{\mathsf S}
\newcommand{\sT}{\mathsf T}
\newcommand{\sU}{\mathsf U}
\newcommand{\Sets}{\mathsf{Sets}}
\newcommand{\Ab}{\mathsf{Ab}}
\newcommand{\cS}{\mathcal S}
\newcommand{\bC}{\mathbf C}
\newcommand{\bM}{\mathbf M}
\newcommand{\fp}{\mathsf{fp}}
\newcommand{\sop}{\mathsf{op}}
\newcommand{\id}{\mathrm{id}}
\newcommand{\boZ}{\mathbb Z}
\newcommand{\modr}{{\operatorname{\mathsf{mod--}}}}
\newcommand{\Modr}{{\operatorname{\mathsf{Mod--}}}}
\newcommand{\Modrfp}{{\operatorname{\mathsf{Mod_{fp}--}}}}
\newcommand{\Modrfl}{{\operatorname{\mathsf{Mod_{fl}--}}}}
\newcommand{\Modrcot}{{\operatorname{\mathsf{Mod^{cot}--}}}}
\newcommand{\Section}[1]{\bigskip\section{#1}\medskip}
\theoremstyle{plain}
\newtheorem{thm}{Theorem}[section]
\newtheorem{lem}[thm]{Lemma}
\newtheorem{prop}[thm]{Proposition}
\newtheorem{cor}[thm]{Corollary}
\theoremstyle{definition}
\newtheorem{ex}[thm]{Example}
\newtheorem{exs}[thm]{Examples}
\newtheorem{rem}[thm]{Remark}
\newtheorem{rems}[thm]{Remarks}
\begin{document}

\author{Leonid Positselski}

\address{Institute of Mathematics, Czech Academy of Sciences \\
\v Zitn\'a~25, 115~67 Praha~1 \\ Czech Republic}

\email{positselski@math.cas.cz}

\title{Locally coherent exact categories}

\begin{abstract}
 A locally coherent exact category is a finitely accessible additive
category endowed with an exact structure in which the admissible
short exact sequences are the directed colimits of admissible
short exact sequences of finitely presentable objects.
 We show that any exact structure on a small idempotent-complete
additive category extends uniquely to a locally coherent exact structure
on the category of ind-objects; in particular, any finitely accessible
category has the unique maximal and the unique minimal locally coherent
exact category structures.
 All locally coherent exact categories are of Grothendieck type in
the sense of \v St\!'ov\'\i\v cek.
 We also discuss the canonical embedding of a small exact category into
the abelian category of additive sheaves in connection with the locally
coherent exact structure on the ind-objects, and deduce two periodicity
theorems as applications.
\end{abstract}

\maketitle

\tableofcontents

\section*{Introduction}
\medskip

 Noetherian rings and modules over them play an important role in
commutative algebra and representation theory.
 Over a Noetherian ring $R$, the category of finitely generated
modules $\modr R$ is abelian, and the abelian category of arbitrary
modules $\Modr R$ is locally Noetherian.
 Coherent rings are a natural generalization of Noetherian ones:
over a coherent ring, the category of finitely presentable modules
$\modr R$ is abelian, and the abelian category of arbitrary
modules $\Modr R$ is locally coherent.
 Still, there are many rings that are not coherent.

 The aim of this paper is to suggest a point of view allowing to
consider the category of modules over an arbitrary ring as ``locally
coherent'' in some sense.
 This comes at the cost of passing from abelian to exact categories
(in Quillen's sense).

 We explain that, for any ring $R$, the abelian category $\Modr R$
(as, more generally, any finitely accessible additive category) has
a complete lattice of  \emph{locally coherent exact structures}.
 The \emph{minimal} locally coherent exact structure is the \emph{pure}
exact structure.
 The \emph{maximal locally coherent exact structure} is the closest
locally coherent approximation to the abelian exact structure on
$\Modr R$.

 The maximal locally coherent exact structure keeps track of the short
exact sequences of finitely presentable $R$\+modules, but disregards
the short exact sequences in which the middle term and the cokernel
are finitely presentable while the kernel is not.
 All information about short exact sequences of the latter type is
destroyed by considering $\Modr R$ as an exact category with
the maximal (or any other) locally coherent exact structure.
 That is the price one has to pay for viewing the category of modules
over an arbitrary ring through coherent lens. 

 More generally, for any regular cardinal~$\kappa$ and any
$\kappa$\+accessible additive category $\sA$, an exact structure
on $\sA$ is said to be \emph{locally\/ $\kappa$\+coherent} if
the admissible short exact sequences in $\sA$ are precisely
the $\kappa$\+directed colimits of admissible short exact sequences
of $\kappa$\+presentable objects.
 We show that locally $\kappa$\+coherent exact structures on $\sA$
correspond bijectively to arbitrary exact structures on the full
subcategory $\sA_{<\kappa}$ of $\kappa$\+presentable objects in~$\sA$.
 (Notice that an abelian category $\sA$ is locally coherent if and
only if it is locally finitely presentable and its full subcategory of
finitely presentable objects is abelian; in other words, this means
that $\sA$ is the category of ind-objects of a small abelian category.)

 Beyond the abelian category $\sA=\Modr R$ of arbitrary $R$\+modules,
the additive category $\sA=\Modrfl R$ of flat $R$\+modules provides
an important example for our theory.
 There is a phenomenon of \emph{flat coherence}: for any regular
cardinal~$\kappa$, the kernel of any surjective morphism of
$\kappa$\+presentable flat $R$\+modules is again a $\kappa$\+presentable
flat $R$\+module~\cite[Lemma~4.1]{Pflcc}, \cite[Corollary~10.12]{Pacc}.
 In the language of the present paper, this observation appears as
a corollary of the fact that the category $\Modrfl R$ with its natural
exact structure is locally $\kappa$\+coherent for any ring~$R$.
 The exact structure on $\Modrfl R$ is pure, but it is not
$\kappa$\+pure, of course.
 
 So, for any small exact category $\sE$, the accessible category
$\Ind\sE$ of ind-objects of $\sE$ acquires the induced locally
coherent exact category structure.
 As a digression, we discuss the classical construction of
the canonical embedding of $\sE$ into the abelian category $\sK$
of left exact functors $\sE^\sop\rarrow\Ab$ in
Section~\ref{canonical-embedding-secn}.
 We observe that the category $\sK$ is locally finitely presentable
(and in fact, locally type $\FP_\infty$), and the locally coherent
exact category structure on $\Ind\sE$ is inherited from the abelian
exact structure on $\sK$ via a natural fully faithful embedding
$\Ind\sE\rarrow\sK$.

 The main results of this paper are two \emph{periodicity theorems}.
 Theorem~\ref{first-periodicity-theorem} generalizes
the flat/projective periodicity theorem of Benson--Goodearl
and Neeman~\cite{BG,Neem} and the fp\+projective periodicity theorem
of \v Saroch--\v St\!'ov\'\i\v cek~\cite{SarSt}, while
Theorem~\ref{second-periodicity-theorem} generalizes the cotorsion
periodicity theorem of Bazzoni, Cort\'es-Izurdiaga,
and Estrada~\cite{BCE}.
 The idea of these generalizations is to replace the split exact
category of finitely generated projective $R$\+modules (or
the abelian category of finitely presentable modules over
a coherent ring~$R$) by an arbitrary small exact category.

 As a particular case of Theorem~\ref{first-periodicity-theorem},
we deduce in Theorem~\ref{mlc-periodicity-theorem} a new version
of fp\+projective periodicity theorem applicable to an arbitrary
(and not necessarily coherent) ring~$R$.
 Another such version of fp\+projective periodicity was previously
obtained in the paper~\cite{BHP}.

 Beyond periodicity theorems, what can one do with locally
coherent exact categories?
 Let us explain the idea which motivated the present research.
 The results of~\cite[Theorem~9.39 and Corollary~9.42]{Pedg}, and
particularly~\cite[Theorem~8.19 and Corollary~8.20]{PS5}, describe
compact generators of the coderived categories of locally coherent
abelian DG\+categories, such as the category of curved DG\+modules
over a curved DG\+ring with a coherent underlying graded ring.
 The coherence condition involved is somewhat restrictive, as
mentioned in the beginning of this introduction.

 The definition of an \emph{exact DG\+category} was given and studied
in the paper~\cite{Pedg}; the \emph{abelian
DG\+categories}~\cite{Pedg,PS5} are a particular case.
 The definition of a locally coherent abelian DG\+category can be
found in~\cite[Section~9.5]{Pedg} and~\cite[Section~8.3]{PS5}.
 We hope that a suitable definition of a locally coherent exact
(rather than abelian) DG\+category can be worked out so that
the results concerning compact generators of coderived categories 
mentioned above would be generalizable to locally coherent exact
DG\+categories, such as the maximal locally coherent exact DG\+category
structure on the category of CDG\+modules over an arbitrary CDG\+ring.

\subsection*{Acknowledgement}
 I~am grateful to Silvana Bazzoni for a stimulating conversation.
 I~also want to thank Jan \v St\!'ov\'\i\v cek, Michal Hrbek, and
Jan Trlifaj for helpful discussions.
 Finally, I~wish to thank the anonymous referee for careful reading of
the manuscript and some helpful suggestions.
 The author is supported by the GA\v CR project 23-05148S and
the Czech Academy of Sciences (RVO~67985840).

\Section{Basic Properties of Locally $\kappa$-Coherent Exact Categories}
\label{basic-properties-secn}

 Let $\kappa$~be a regular cardinal.
 We refer to the beginning paragraphs of the Appendix for
the definitions, notation, and references concerning
$\kappa$\+presentable objects and $\kappa$\+accessible categories.

 Given an (additive) category $\sA$, consider the category $\bM^3(\sA)$
of composable pairs of morphisms in $\sA$, and its full subcategory
$\bC^3(\sA)\subset\bM^3(\sA)$ of three-term complexes in~$\sA$
(that is, composable pairs of morphisms with zero composition).
 We also consider the category $\bC^2(\sA)=\bM^2(\sA)$ of morphisms
in~$\sA$.

\begin{prop} \label{simple-diagrams-accessible}
 Let\/ $\sA$ be a $\kappa$\+accessible category.
 In this context: \par
\textup{(a)} The category of morphisms\/ $\bM^2(\sA)$ is
$\kappa$\+accessible.
 A morphism $A^1\rarrow A^2$ in\/ $\sA$ is $\kappa$\+presentable as
an object of\/ $\bM^2(\sA)$ if and only if both the objects $A^1$
and $A^2$ are $\kappa$\+presentable in\/~$\sA$. \par
\textup{(b)} The category of composable pairs of morphisms\/
$\bM^3(\sA)$ is $\kappa$\+accessible.
 A pair of morphisms $A^1\rarrow A^2\rarrow A^3$ in\/ $\sA$ is
$\kappa$\+presentable as an object of\/ $\bM^3(\sA)$ if and only if
all the three objects $A^1$, $A^2$, and $A^3$ are $\kappa$\+presentable
in\/~$\sA$. \par
\textup{(c)} Assume that the category\/ $\sA$ is additive.
 Then the category of three-term complexes\/ $\bC^3(\sA)$ is
$\kappa$\+accessible.
 A three-term complex $A^1\rarrow A^2\rarrow A^3$ in\/ $\sA$ is
$\kappa$\+presentable as an object of\/ $\bC^3(\sA)$ if and only if
all the three objects $A^1$, $A^2$, and $A^3$ are $\kappa$\+presentable
in\/~$\sA$.
\end{prop}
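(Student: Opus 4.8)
The plan is to take part~(a) as the prototype and reduce~(b) and~(c) to the same four points. In each case the category in question is, or sits as a full subcategory in, a diagram category $\Fun(\sD,\sA)$ --- with $\sD$ the arrow category for $\bM^2(\sA)$ and the three-object chain $0\to1\to2$ for $\bM^3(\sA)$ --- and the scheme is: (1) the relevant $\kappa$\+directed colimits exist and are computed pointwise, since they are so in $\sA$; (2) a diagram all of whose values are $\kappa$\+presentable in $\sA$ is $\kappa$\+presentable in the diagram category; (3) conversely, a $\kappa$\+presentable object of the diagram category has all its values $\kappa$\+presentable in $\sA$; (4) every diagram is a $\kappa$\+directed colimit of diagrams of $\kappa$\+presentable objects of $\sA$. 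Once a skeleton of $\sA_{<\kappa}$ is fixed, the diagrams of $\kappa$\+presentable objects form a set, so (1) and~(4) give $\kappa$\+accessibility and (2)--(3) identify the $\kappa$\+presentable objects.

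For~(a), point~(2) holds because a morphism $(A^1\xrightarrow{a}A^2)\rarrow(B^1\xrightarrow{b}B^2)$ in $\bM^2(\sA)$ is a pair $(h^1,h^2)\in\Hom(A^1,B^1)\times\Hom(A^2,B^2)$ with $bh^1=h^2a$, so $\Hom_{\bM^2(\sA)}((A^1\to A^2),{-})$ is an equalizer of two natural transformations built from the functors $\Hom(A^i,\mathrm{ev}_j({-}))$; for $A^1,A^2$ $\kappa$\+presentable each of these preserves $\kappa$\+directed colimits, and a finite limit of such functors preserves $\kappa$\+directed colimits because finite limits commute with $\kappa$\+directed colimits in $\Sets$. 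For point~(3), given $M=(A^1\xrightarrow{a}A^2)$ $\kappa$\+presentable in $\bM^2(\sA)$, write $A^1=\varinjlim_{i\in I}P_i$ as a $\kappa$\+directed colimit of $\kappa$\+presentable objects with structure maps $\sigma_i$, and form the $\kappa$\+directed diagram $N_i=(P_i\xrightarrow{a\sigma_i}A^2)$ (transition maps from $I$ on the source, $\id_{A^2}$ on the target); its pointwise colimit is $M$, so $\id_M$ factors through some $N_{i_0}$, and the source component of the factorization is a section of $\sigma_{i_0}$, exhibiting $A^1$ as a retract of a $\kappa$\+presentable object. That $A^2$ is $\kappa$\+presentable follows from the adjunction $\mathrm{ev}_1\dashv\Delta$, $\Delta(X)=(X\xrightarrow{\id}X)$, whose right adjoint preserves $\kappa$\+directed colimits, or by the same retract argument applied to a resolution of $A^2$, which is legitimate once $A^1$ is known $\kappa$\+presentable so that $a$ factors through that resolution. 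Point~(4) is the standard computation of the canonical diagram: the category of $\kappa$\+presentable arrows equipped with a morphism to $(B^1\to B^2)$ is $\kappa$\+filtered, by the usual manipulations with $\kappa$\+presentability and $\kappa$\+directedness, and its pointwise colimit is $(B^1\to B^2)$.

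Part~(b) is obtained by the same argument with the three-object chain replacing the arrow category: in~(2) the $\Hom$\+functor out of $(P^1\to P^2\to P^3)$ is again a finite limit of the $\Hom(P^i,{-})$; in~(3) one proves $A^1$, then $A^2$, then $A^3$ to be $\kappa$\+presentable in that order, at each stage building from a resolution of the relevant object a $\kappa$\+directed diagram of composable pairs with colimit $M$ and reading off a retraction, the factorizations needed to construct those diagrams being available precisely because the earlier objects are already known to be $\kappa$\+presentable; (1) and~(4) are as before.

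For~(c), which invokes the additivity of $\sA$ (through the availability of a zero object and zero morphisms), one first checks that $\bC^3(\sA)$ is closed under $\kappa$\+directed colimits in $\bM^3(\sA)$: the composite of the colimit of a $\kappa$\+directed system of three-term complexes is the colimit of the (zero) composites, and a colimit of zero morphisms is zero; so $\kappa$\+directed colimits in $\bC^3(\sA)$ exist and are computed as in $\bM^3(\sA)$, whence point~(1), and point~(2) is immediate since a three-term complex of $\kappa$\+presentable objects is $\kappa$\+presentable already in $\bM^3(\sA)$, hence in the full subcategory $\bC^3(\sA)$. Point~(3) for $\bC^3(\sA)$ is proved by the same retract argument as for $\bM^3(\sA)$, carried out inside $\bC^3(\sA)$: when one resolves a component of a complex $M=(A^1\xrightarrow{f}A^2\xrightarrow{g}A^3)$ and keeps the other objects and maps, the resulting composable pairs $N_i$ are automatically three-term complexes (their composites are restrictions of $gf=0$), so the argument never leaves $\bC^3(\sA)$. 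The remaining point~(4) is the one place where something genuinely new is needed, and I expect it to be the main obstacle: every three-term complex $A^1\xrightarrow{f}A^2\xrightarrow{g}A^3$ must be exhibited as a $\kappa$\+directed colimit of three-term complexes of $\kappa$\+presentable objects. Starting from a presentation of it as a $\kappa$\+directed colimit of $\kappa$\+presentable composable pairs $(P^1_i\to P^2_i\to P^3_i)$ supplied by part~(b), one sees that for each $i$ the composite $P^1_i\to P^3_i$ vanishes after the structure map to $A^3$, hence, by $\kappa$\+presentability of $P^1_i$ and $\kappa$\+directedness, vanishes at some later index; the subtlety is that merely passing to a later index does not make the composite identically zero, so one must enlarge the indexing category --- the standard device is to construct, by transfinite recursion along a $\kappa$\+directed poset, a cofinal subsystem in which the composites have actually been annihilated, whose colimit is therefore unchanged and whose vertices remain $\kappa$\+presentable. (Alternatively, one may describe $\bC^3(\sA)$ as the strict equalizer in $\mathrm{Cat}$ of the two $\kappa$\+directed\+colimit\+preserving functors $\bM^3(\sA)\rightrightarrows\bM^2(\sA)$ sending $(A^1\to A^2\to A^3)$ to $(A^1\xrightarrow{gf}A^3)$ and to $(A^1\xrightarrow{0}A^3)$, from which accessibility can also be extracted.) With~(4) in hand, $\bC^3(\sA)$ has a small generating family of $\kappa$\+presentable objects together with $\kappa$\+directed colimits, hence is $\kappa$\+accessible, and its $\kappa$\+presentable objects are precisely the three-term complexes of $\kappa$\+presentable objects of $\sA$.
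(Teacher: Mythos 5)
Your argument is correct in substance but takes a genuinely different route from the paper. The paper proves all three parts by specializing general results from its appendix: parts~(a) and~(b) are instances of Proposition~\ref{nonadditive-functors-prop} (or of the comma-category Proposition~\ref{comma-accessible}), and part~(c) is an instance of the $k$-linear Proposition~\ref{k-linear-functors-prop}, whose proof runs through the ``comma-category with equations'' Theorem~\ref{comma-category-with-equations-theorem} --- essentially the equalizer description you mention parenthetically at the end. The only direct argument the paper gives is for the ``if and only if'' clause of~(c): preservation of $\kappa$-presentability by the forgetful functor $F^3\colon\bC^3(\sA)\to\sA^3$ is deduced from the existence of a $\kappa$-directed-colimit-preserving right adjoint $G^{3-}$ built from finite direct sums, and the converse from commutation of directed colimits with finite limits in $\Sets$. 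Your point~(2) coincides with that converse; for point~(3) you replace the adjoint-functor trick by retract arguments, and for the accessibility statement in~(c) (your point~(4)) you give the direct ``annihilate the composites at a later index'' construction instead of invoking the appendix machinery. This is a legitimate, more self-contained proof; what it costs is that the index-juggling in~(4) must be written out carefully --- the corrected system is not a cofinal subsystem of the original one but a new $\kappa$-directed diagram (indexed, say, by pairs $(i,j)$ with $j\ge i$ and transition map killing the composite) mapping cofinally onto the original in each component --- which the general theorem does once and for all.

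One local slip to repair: in point~(3) for $\bC^3(\sA)$ you assert that resolving any one component of $M=(A^1\xrightarrow{f}A^2\xrightarrow{g}A^3)$ and keeping the remaining data automatically yields three-term complexes. That is true when you resolve $A^1$ or $A^2$, but not when you resolve $A^3=\varinjlim_k R_k$: if $g$ factors as $\rho_{k_0}\circ g_{k_0}$ with $g_{k_0}\colon A^2\to R_{k_0}$, the composite $g_{k_0}f$ is only known to vanish after further composition with $\rho_{k_0}$, so $(A^1\to A^2\to R_{k_0})$ need not be a complex. The fix is exactly the device of your point~(4): since $A^1$ is by then known to be $\kappa$-presentable, $g_{k_0}f$ is annihilated by some transition map $R_{k_0}\to R_{k_1}$, and one restricts to indices $k\ge k_1$. (Alternatively, evaluation at the third spot admits the colimit-preserving right adjoint $X\mapsto(0\to X\xrightarrow{\id}X)$ landing in $\bC^3(\sA)$, which yields $\kappa$-presentability of $A^3$ at once; this is a fragment of the paper's $G^{3-}$.)
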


\begin{proof}
 Part~(a) is an easy particular case of
Proposition~\ref{nonadditive-functors-prop}, or alternatively,
a special case of Proposition~\ref{comma-accessible}.
 Part~(b) is also an easy particular case of
Proposition~\ref{nonadditive-functors-prop}; it can be also obtained
by an iterated application of Proposition~\ref{comma-accessible}.
 Part~(c) is an easy particular case of
Proposition~\ref{k-linear-functors-prop} (for $k=\boZ$).

 Alternatively, here is a simple direct proof of the ``if and only if''
assertion in part~(c), not depending on the assumption that $\sA$
is $\kappa$\+accessible, but only that $\kappa$\+directed colimits
exist in~$\sA$.
 Denoting by $\sA^3$ the Cartesian product of three copies of $\sA$,
we have to show that the forgetful functor $F^3\:\bC^3(\sA)\rarrow\sA^3$ 
preserves and reflects $\kappa$\+presentability of objects (cf.\
Lemma~\ref{product-accessible}).
 Indeed, the functor $F^3$ preserves $\kappa$\+presentability, since
its right adjoint functor $G^{3-}\:\sA^3\rarrow\bC^3(\sA)$, which
is easily constructed explicitly in terms of finite direct sums of
objects in $\sA$, preserves $\kappa$\+directed colimits
(the assumption that $\sA$ is additive is used here).
 The converse assertion follows from the fact that directed colimits
commute with finite limits in the category of sets, and holds in any
finite diagram category~\cite[Proposition~2.1]{Hen}.
\end{proof}

 We refer to the survey paper~\cite{Bueh} for the background material
on exact categories (in the sense of Quillen).
 The expositions in~\cite[Appendix~A]{Kel} and~\cite[Appendix~A]{Partin}
can be used as supplements.
 Notice that any $\kappa$\+accessible additive category $\sA$ and its
full subcategory of $\kappa$\+presentable objects $\sA_{<\kappa}$
are idempotent-complete by~\cite[Observation~2.4 and
Proposition~1.16]{AR}.
 Hence both $\sA$ and $\sA_{<\kappa}$ are weakly idempotent-complete
in the sense of~\cite[Section~7]{Bueh}.

 Let $\sA$ be a $\kappa$\+accessible additive category.
 We will say that an exact category structure on $\sA$ is \emph{locally
$\kappa$\+coherent} if the admissible short exact sequences in $\sA$
are precisely the $\kappa$\+directed colimits of admissible short
exact sequences in $\sA$ with the terms belonging to\/~$\sA_{<\kappa}$.
 The $\kappa$\+directed colimits here are taken in the category of
three-term complexes~$\bC^3(\sA)$.

\begin{lem} \label{short-exact-preserved-by-directed-colimits}
 In any locally $\kappa$\+coherent exact category, the class of all
(admissible) short exact sequences is preserved by
$\kappa$\+directed colimits.
\end{lem}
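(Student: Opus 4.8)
The plan is to reduce the claim directly to the definition of a locally $\kappa$\+coherent exact structure, by an interchange of two $\kappa$\+directed colimits. Let $(X^{(i)})_{i\in I}$ be a $\kappa$\+directed diagram in $\bC^3(\sA)$, indexed over a $\kappa$\+directed poset~$I$, all of whose terms $X^{(i)}$ are admissible short exact sequences, and let $X=\varinjlim_{i\in I}X^{(i)}$ be its colimit, formed in $\bC^3(\sA)$; we must show that $X$ is an admissible short exact sequence. Write $\sS\subseteq\bC^3(\sA)$ for the (essentially small) full subcategory of admissible short exact sequences all of whose three terms lie in $\sA_{<\kappa}$. By Proposition~\ref{simple-diagrams-accessible}(c), the category $\bC^3(\sA)$ is $\kappa$\+accessible and an object of it is $\kappa$\+presentable if and only if its three terms are $\kappa$\+presentable in~$\sA$; in particular every object of $\sS$ is $\kappa$\+presentable in $\bC^3(\sA)$. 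By the definition of a locally $\kappa$\+coherent exact structure, it therefore suffices to exhibit $X$ as a $\kappa$\+directed colimit, taken in $\bC^3(\sA)$, of a diagram all of whose vertices belong to~$\sS$.

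The main step is to unfold each $X^{(i)}$. By the same definition, $X^{(i)}=\varinjlim_{j\in J_i}Y^{(i)}_j$ for a $\kappa$\+directed poset $J_i$ and a diagram with vertices $Y^{(i)}_j\in\sS$, the colimit again being formed in $\bC^3(\sA)$. Thus $X$ is a $\kappa$\+directed colimit of $\kappa$\+directed colimits of objects of~$\sS$, and I would now invoke the standard fact on $\kappa$\+accessible categories --- recalled in the Appendix, see also~\cite{AR} --- that such an iterated colimit may be reorganized as a single $\kappa$\+directed colimit whose vertices are again among the $Y^{(i)}_j$. Concretely, since each $Y^{(i)}_j$ is $\kappa$\+presentable in $\bC^3(\sA)$, every transition arrow $i\le i'$ of $I$ yields, for each $j\in J_i$, a composite $Y^{(i)}_j\rarrow X^{(i)}\rarrow X^{(i')}$ into the $\kappa$\+directed colimit $X^{(i')}=\varinjlim_{j'\in J_{i'}}Y^{(i')}_{j'}$, hence a factorization through some $Y^{(i')}_{j'}$; assembling such factorizations compatibly, after a routine cofinal reindexing, produces a $\kappa$\+directed diagram in $\sS$, with vertices among the $Y^{(i)}_j$, whose colimit in $\bC^3(\sA)$ is~$X$. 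Since all of its vertices lie in $\sS$, the definition of a locally $\kappa$\+coherent exact structure now tells us that $X$ is an admissible short exact sequence, as required.

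The one point that would require actual work, rather than a mere appeal to definitions, is this last reorganization of the doubly-indexed family $(Y^{(i)}_j)_{i\in I,\,j\in J_i}$ into a single $\kappa$\+directed system with colimit~$X$: the factorizations above must be chosen coherently enough that the resulting index category is $\kappa$\+directed --- or at least $\kappa$\+filtered, which suffices here since a $\kappa$\+filtered colimit can be computed as a $\kappa$\+directed one --- and that the tautological cocone under the assembled diagram is colimiting. This is exactly where the $\kappa$\+presentability of the objects of $\sS$ inside $\bC^3(\sA)$ enters, through the characterization of $\kappa$\+presentable objects by factorization of morphisms into $\kappa$\+directed colimits; it amounts to the routine but slightly technical assertion that, in a $\kappa$\+accessible category, the closure of a set of $\kappa$\+presentable objects under $\kappa$\+directed colimits is itself closed under $\kappa$\+directed colimits, which I would quote from the Appendix (or~\cite{AR}) rather than reprove here.
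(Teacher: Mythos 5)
Your argument is correct and is essentially the paper's own proof: the paper likewise combines Proposition~\ref{simple-diagrams-accessible}(c) with Proposition~\ref{accessible-subcategory}, applied to the $\kappa$\+accessible category $\bC^3(\sA)$ and the set $\sT$ of short exact sequences with terms in $\sA_{<\kappa}$, whose first assertion is exactly the closure of $\varinjlim_{(\kappa)}\sT$ under $\kappa$\+directed colimits that you invoke. The reorganization of the iterated colimit that you flag as the one point requiring actual work is precisely the content of that cited proposition, so nothing further is needed.
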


\begin{proof}
 Follows from Proposition~\ref{simple-diagrams-accessible}(c)
together with Proposition~\ref{accessible-subcategory}
(applied to the $\kappa$\+accessible category $\bC^3(\sA)$ and
the class $\sT$ of all short exact sequences in $\sA$ with
the terms belonging to~$\sA_{<\kappa}$).

 Notice that we have shown more than the lemma claims.
 Applied to the situation at hand,
Proposition~\ref{accessible-subcategory} tells us that the category
of short exact sequences in $\sA$ is $\kappa$\+accessible, and its
full subcategory of $\kappa$\+presentable objects consists of all
the short exact sequences of $\kappa$\+presentable objects in~$\sA$.
\end{proof}

\begin{rem} \label{accessible-monos-epis-as-dir-colims-remark}
 Let $\sA$ be a locally $\kappa$\+coherent exact category.
 Then it follows from
Lemma~\ref{short-exact-preserved-by-directed-colimits} that the classes
of admissible monomorphisms and admissible epimorphisms in $\sA$ are
also preserved by $\kappa$\+directed colimits.
 Therefore, the admissible monomorphisms in $\sA$ are precisely all
the $\kappa$\+directed colimits of admissible monomorphisms
with $\kappa$\+presentable domain and codomain.
 Similarly, the admissible epimorphisms in $\sA$ are precisely all
the $\kappa$\+directed colimits of admissible epimorphisms
with $\kappa$\+presentable domain and codomain.
\end{rem}

\begin{prop} \label{finitely-presentables-extension-closed}
 In any locally $\kappa$\+coherent exact structure on
a $\kappa$\+accessible additive category\/ $\sA$, the full subcategory
of $\kappa$\+presentable objects\/ $\sA_{<\kappa}$ is closed under
extensions in\/~$\sA$.
 So\/ $\sA_{<\kappa}$ inherits an exact category structure from\/~$\sA$.
\end{prop}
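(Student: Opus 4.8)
The plan is to exhibit $A^2$ as a direct summand of a $\kappa$\+presentable object constructed as a pushout, using that $\sA_{<\kappa}$ is closed under $\kappa$\+small colimits in $\sA$ (so that a finite colimit of $\kappa$\+presentable objects is $\kappa$\+presentable) and that a direct summand of a $\kappa$\+presentable object is $\kappa$\+presentable.

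Let $S=(0\rarrow A^1\rarrow A^2\rarrow A^3\rarrow 0)$ be an admissible short exact sequence with $A^1,A^3\in\sA_{<\kappa}$. By local $\kappa$\+coherence, $S=\varinjlim_{i\in I}S_i$ is a $\kappa$\+directed colimit in $\bC^3(\sA)$ (hence computed termwise in $\sA$) of admissible short exact sequences $S_i=(0\rarrow B_i^1\rarrow B_i^2\rarrow B_i^3\rarrow 0)$ with all terms in $\sA_{<\kappa}$; let $\phi_i=(\phi_i^1,\phi_i^2,\phi_i^3)\:S_i\rarrow S$ be the colimit cocone. Since $A^3$ is $\kappa$\+presentable and $A^3=\varinjlim_iB_i^3$, the identity of $A^3$ factors through some $\phi_a^3\:B_a^3\rarrow A^3$, so $\phi_a^3$ is a split epimorphism, hence an admissible epimorphism since $\sA$ is weakly idempotent-complete, and $B_a^3\cong A^3\oplus N$ with $N=\ker\phi_a^3\in\sA_{<\kappa}$.

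Next, form the pushout $G$ of the admissible monomorphism $B_a^1\rarrow B_a^2$ along $\phi_a^1\:B_a^1\rarrow A^1$ (it exists in the exact category $\sA$). On the one hand, $G$ is a finite colimit of the $\kappa$\+presentable objects $A^1$, $B_a^1$, $B_a^2$, so $G\in\sA_{<\kappa}$, and the pushout yields an admissible short exact sequence $S'=(0\rarrow A^1\rarrow G\rarrow B_a^3\rarrow 0)$ together with a morphism $\Psi\:S'\rarrow S$ which is the identity on the kernel terms (because $A^1\rarrow A^2$ is monic) and $\phi_a^3$ on the cokernel terms. On the other hand, pulling $S$ back along the admissible epimorphism $\phi_a^3$ yields an admissible short exact sequence $0\rarrow A^1\rarrow A^2\times_{A^3}B_a^3\rarrow B_a^3\rarrow 0$ whose middle term is $A^2\times_{A^3}B_a^3\cong A^2\oplus N$, since $\phi_a^3$ is split with kernel $N$. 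The morphism $\Psi$ factors through this pullback by a morphism of admissible short exact sequences that is the identity on the kernel and the cokernel terms, hence an isomorphism by the short five lemma in exact categories~\cite{Bueh}. Thus $A^2\oplus N\cong G\in\sA_{<\kappa}$, and $A^2$, being a direct summand of $G$, is $\kappa$\+presentable. So $\sA_{<\kappa}$ is closed under extensions in $\sA$; being moreover a full, weakly idempotent-complete additive subcategory, it inherits the induced exact structure~\cite{Bueh}.

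The main point I expect to require care is the comparison of the pushout $G$ with the pullback $A^2\times_{A^3}B_a^3$ --- that is, verifying that $\Psi$ is genuinely the identity on the kernel and cokernel terms, so that the short five lemma applies. It is exactly in forming $G$ that the locally $\kappa$\+coherent hypothesis is used in full strength: not merely that $S$ is a termwise $\kappa$\+directed colimit of three-term complexes, but that the terms of this colimit are admissible short exact sequences, so that $B_a^1\rarrow B_a^2$ is an admissible monomorphism and $G$ itself sits in an admissible short exact sequence. One may also observe that the hypothesis on $A^3$ is what produces the splitting $B_a^3\cong A^3\oplus N$, while the hypothesis on $A^1$ is needed only to ensure that $G$ is $\kappa$\+presentable.
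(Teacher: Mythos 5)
Your proof is correct, and while it opens exactly as the paper's does---presenting the sequence as a $\kappa$-directed colimit of admissible short exact sequences with $\kappa$-presentable terms and using $\kappa$-presentability of an end term to split a comparison map---the second half takes a genuinely different route. The paper splits at \emph{both} ends, concludes that $S^2_\eta\rarrow A^2$ is an admissible epimorphism (as an extension of two admissible epimorphisms), passes to the kernel sequence, runs the whole argument a second time, and finally exhibits $A^2$ as the cokernel of a morphism between $\kappa$-presentable objects. You split only at $A^3$, push out $B^1_a\rightarrowtail B^2_a$ along $\phi^1_a$ to obtain a $\kappa$-presentable object $G$, and identify $G\cong A^2\oplus N$ by comparing with the pullback of $A^2\twoheadrightarrow A^3$ along the split epimorphism $\phi^3_a$ and applying the five lemma \cite{Bueh}. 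Your version needs only one pass through the colimit presentation and exhibits $A^2$ directly as a direct summand of a $\kappa$-presentable object, and it isolates neatly which hypothesis ($A^1$ versus $A^3$ being $\kappa$-presentable) enters where; the paper's version avoids the pushout--pullback comparison, trading it for the closure of admissible epimorphisms under extensions and an iteration. Both rest on the same two facts about $\sA_{<\kappa}$: closure under finite colimits existing in $\sA$ and under direct summands. Two cosmetic points: the parenthetical ``because $A^1\rarrow A^2$ is monic'' is not what makes $\Psi$ the identity on kernel terms (that is immediate from the universal property of the pushout), and weak idempotent-completeness of $\sA_{<\kappa}$ is not needed for the final sentence---a full additive subcategory closed under extensions always inherits an exact structure.
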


\begin{proof}
 Let $0\rarrow A^1\rarrow A^2\rarrow A^3\rarrow0$ be a short exact
sequence in~$\sA$.
 Then there exists a $\kappa$\+directed poset $\Xi$ and
a $\Xi$\+indexed diagram of short exact sequences $0\rarrow S^1_\xi
\rarrow S^2_\xi\rarrow S^3_\xi\rarrow0$ with $S^i_\xi\in\sA_{<\kappa}$
whose $\kappa$\+directed colimit over $\xi\in\Xi$ is the short exact
sequence $0\rarrow A^1\rarrow A^2 \rarrow A^3\rarrow0$.

 Now assume that $A^1\in\sA_{<\kappa}$ and $A^3\in\sA_{<\kappa}$.
 Then, for $i=1$ and~$3$, the isomorphism $A^i\rarrow
\varinjlim_{\xi\in\Xi}S^i_\xi$ factorizes through some objects in
the diagram.
 So there exists an index $\eta\in\Xi$ for which both the morphisms
$S^1_\eta\rarrow A^1$ and $S^3_\eta\rarrow A^3$ are split
epimorphisms in~$\sA_{<\kappa}$.
 As all split epimorphisms are admissible (since $\sA$ is weakly
idempotent-complete), and an extension of two admissible epimorphisms
in an exact category is an admissible
epimorphism~\cite[Corollary~3.6]{Bueh}, it follows that the morphism
$S^2_\eta\rarrow A^2$ is also an admissible epimorphism in~$\sA$.

 For every $i=1$, $2$, $3$, denote by $B^i$ the kernel of
the admissible epimorphism $S^i_\eta\rarrow A^i$.
 As the kernel of a termwise admissible epimorphism of short exact
sequences in an exact category is a short exact
sequence~\cite[Corollary~3.6]{Bueh}, we get a short exact sequence
$0\rarrow B^1\rarrow B^2\rarrow B^3\rarrow0$ in~$\sA$.
 Now a direct summand of a $\kappa$\+presentable object is
$\kappa$\+presentable, so $B^1$ and $B^3$ belong to~$\sA_{<\kappa}$.

 Applying the same argument to the short exact sequence $0\rarrow B^1
\rarrow B^2\rarrow B^3\rarrow0$, we obtain a termwise admissible
epimorphism from a short exact sequence $0\rarrow T^1\rarrow T^2
\rarrow T^3\rarrow0$ with $T^i\in\sA_{<\kappa}$ to the short
exact sequence $0\rarrow B^1\rarrow B^2\rarrow B^3\rarrow0$.
 Finally, we can conclude that the object $A^2$ belongs to
$\sA_{<\kappa}$, because $A^2$ is the cokernel of the composition
$T^2\rarrow B^2\rarrow S^2_\eta$ and finite colimits of
$\kappa$\+presentable objects are
$\kappa$\+presentable~\cite[Proposition~1.16]{AR}.
\end{proof}

\begin{lem} \label{admissible-epis-characterized}
 Let\/ $\sA$ be a locally $\kappa$\+coherent exact category and
$D\rarrow E$ be a morphism in\/~$\sA$.
 Then the following conditions are equivalent:
\begin{enumerate}
\item $D\rarrow E$ is an admissible epimorphism in\/~$\sA$;
\item for any object $S\in\sA_{<\kappa}$, any morphism $S\rarrow E$
in\/ $\sA$ can be included into a commutative square diagram
$$
 \xymatrix{
  D \ar[r] & E \\
  T \ar@{..>}[u] \ar@{..>>}[r] & S \ar[u]
 }
$$
\end{enumerate}
with an object $T\in\sA_{<\kappa}$ and an admissible epimorphism
$T\rarrow S$ in\/~$\sA$.
\end{lem}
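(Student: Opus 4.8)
The plan is to treat the two implications separately; the easy one is (1)\,$\Rightarrow$\,(2), and the work is in (2)\,$\Rightarrow$\,(1).

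\smallskip

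\emph{(1)\,$\Rightarrow$\,(2).}
Complete the admissible epimorphism $D\rarrow E$ to an admissible short exact sequence $0\rarrow C\rarrow D\rarrow E\rarrow 0$ and, using the definition of a locally $\kappa$\+coherent structure, write it as a $\kappa$\+directed colimit $\varinjlim_{\xi\in\Xi}(0\rarrow S^1_\xi\rarrow S^2_\xi\rarrow S^3_\xi\rarrow 0)$ of admissible short exact sequences with terms in~$\sA_{<\kappa}$; then $D=\varinjlim_\xi S^2_\xi$ and $E=\varinjlim_\xi S^3_\xi$.
Given $S\in\sA_{<\kappa}$ and a morphism $S\rarrow E$, its $\kappa$\+presentability yields a factorization $S\rarrow S^3_\eta\rarrow E$ for some $\eta\in\Xi$.
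Pulling back the admissible short exact sequence $0\rarrow S^1_\eta\rarrow S^2_\eta\rarrow S^3_\eta\rarrow 0$ along $S\rarrow S^3_\eta$ produces an admissible short exact sequence $0\rarrow S^1_\eta\rarrow T\rarrow S\rarrow 0$ (so $T\rarrow S$ is an admissible epimorphism) together with a morphism $T\rarrow S^2_\eta$; here $T\in\sA_{<\kappa}$ by Proposition~\ref{finitely-presentables-extension-closed}, being an extension of $S$ by $S^1_\eta$.
The composite $T\rarrow S^2_\eta\rarrow D$ with the colimit coprojection fills the required square, as one checks from the pullback property and the compatibility of the two colimit cocones with $D\rarrow E$.

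\smallskip

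\emph{(2)\,$\Rightarrow$\,(1).}
Write $\delta$ for the morphism $D\rarrow E$ regarded as an object of the $\kappa$\+accessible category $\bM^2(\sA)$ (Proposition~\ref{simple-diagrams-accessible}(a)).
The plan is to present $\delta$ as a $\kappa$\+directed colimit of admissible epimorphisms between $\kappa$\+presentable objects.
Once this is done, lift the diagram to $\bC^3(\sA)$ by taking kernels, obtaining a $\kappa$\+directed diagram of admissible short exact sequences, and note --- by Lemma~\ref{short-exact-preserved-by-directed-colimits}, together with the fact that $\kappa$\+directed colimits in $\bC^3(\sA)$ and $\bM^2(\sA)$ are computed termwise --- that its colimit $0\rarrow C\rarrow D\rarrow E\rarrow 0$ is again an admissible short exact sequence, so $D\rarrow E$ is an admissible epimorphism.
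To produce the presentation, let $\mathcal I$ be the comma category of morphisms $\phi\rarrow\delta$ in $\bM^2(\sA)$ with $\phi$ a $\kappa$\+presentable object of $\bM^2(\sA)$, i.e.\ a morphism between two objects of $\sA_{<\kappa}$; it is $\kappa$\+filtered and $\delta$ is the colimit of the tautological diagram $\mathcal I\rarrow\bM^2(\sA)$.
Let $\mathcal J\subseteq\mathcal I$ be the full subcategory on those $\phi\rarrow\delta$ for which $\phi$ is an admissible epimorphism.
It then suffices to prove that $\mathcal J\hookrightarrow\mathcal I$ is cofinal: $\delta$ is then also the colimit over $\mathcal J$, and replacing the (automatically $\kappa$\+filtered, being a cofinal full subcategory of $\mathcal I$) category $\mathcal J$ by a cofinal $\kappa$\+directed poset~\cite{AR} gives the desired presentation, its kernels supplying the associated $\kappa$\+directed diagram of admissible short exact sequences.

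The main device is a fattening construction, resting on the elementary observation that in a weakly idempotent-complete exact category the morphism $A\oplus B\rarrow C$ with components $f\:A\rarrow C$ and $g\:B\rarrow C$ is an admissible epimorphism whenever $g$ is --- it factors as $A\oplus B\xrightarrow{\,\id_A\oplus\,g\,}A\oplus C\rarrow C$, a direct sum of admissible short exact sequences followed by the split epimorphism with components $f$ and $\id_C$~\cite[Section~7]{Bueh}.
Now, given an object $(D'\rarrow E')\rarrow\delta$ of $\mathcal I$, apply hypothesis~(2) to the component $E'\rarrow E$ to obtain $T\in\sA_{<\kappa}$, an admissible epimorphism $T\rarrow E'$, and $T\rarrow D$ with $(D\rarrow E)\,(T\rarrow D)=(E'\rarrow E)\,(T\rarrow E')$.
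Then the morphism $D'\oplus T\rarrow E'$ with components $D'\rarrow E'$ and $T\rarrow E'$ is an admissible epimorphism by the observation, and together with the morphism $D'\oplus T\rarrow D$ with components $D'\rarrow D$ and $T\rarrow D$ it forms an object of $\mathcal J$ receiving a morphism from $(D'\rarrow E')\rarrow\delta$ in~$\mathcal I$.
This gives nonemptiness of every comma category $k\!\downarrow\!\mathcal J$; connectedness follows formally, since given two objects of $\mathcal J$ under some $k\in\mathcal I$, the $\kappa$\+filteredness of $\mathcal I$ supplies a further object $\chi\in\mathcal I$ under both with equal composites out of $k$, and the fattening construction applied to $\chi$ produces an object of $\mathcal J$ below both of the original ones compatibly with the morphisms out of~$k$.

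\smallskip

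I expect the connectedness clause to be the only delicate point: a direct attempt to amalgamate the two square diagrams must match up their source and target objects simultaneously across the admissible epimorphisms, which is awkward, whereas first passing to a common successor inside the ambient category $\mathcal I$ and only then fattening circumvents this.
Everything else --- $\kappa$\+accessibility of $\bM^2(\sA)$ and $\bC^3(\sA)$, the passage to a cofinal $\kappa$\+directed poset, functoriality of kernels, and the standard exact-category manipulations of~\cite{Bueh} --- is routine bookkeeping.
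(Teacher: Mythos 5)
Your proof is correct and follows essentially the same route as the paper: the same colimit-plus-pullback argument for (1)$\Rightarrow$(2), and for (2)$\Rightarrow$(1) the same key device of replacing a square $(T'\to S')\to(D\to E)$ by the admissible epimorphism $T'\oplus T\twoheadrightarrow S'$ (the dual of~\cite[Exercise~3.11(i)]{Bueh}). The only difference is organizational: you spell out the cofinality/connectedness argument for the subcategory of admissible-epimorphism squares inside the canonical comma category over $\delta$ in $\bM^2(\sA)$, whereas the paper delegates exactly that step to (the proof of) Proposition~\ref{accessible-subcategory}.
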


\begin{proof}
 (1)~$\Longrightarrow$~(2)
 By the definition, there exists a $\kappa$\+directed poset $\Xi$ and
a $\Xi$\+indexed diagram of short exact sequences $0\rarrow U_\xi
\rarrow T_\xi\rarrow S_\xi\rarrow0$ in $\sA$ with the terms
$U_\xi$, $T_\xi$, $S_\xi$ belonging to $\sA_{<\kappa}$ such that
the morphism $D\rarrow E$ is the colimit of the morphisms
$T_\xi\rarrow S_\xi$ over $\xi\in\Xi$.
 Now the morphism $S\rarrow E$ factorizes as $S\rarrow S_\xi\rarrow E$
for some $\xi\in\Xi$.
 It remains to consider the pullback diagram
$$
 \xymatrix{
  U_\xi \ar@{>->}[r] & T_\xi \ar@{->>}[r] & S_\xi \\
  U_\xi \ar@{>->}[r] \ar@{=}[u] & T \ar@{->>}[r] \ar[u] & S \ar[u]
 }
$$
where the pullback object $T$ exists in $\sA$, since all pullbacks of
admissible epimorphisms exist by the definition of an exact category.
 By Proposition~\ref{finitely-presentables-extension-closed}, we have
$T\in\sA_{<\kappa}$, since $U_\xi\in\sA_{<\kappa}$ and
$S\in\sA_{<\kappa}$.

 (2)~$\Longrightarrow$~(1)
 The following observation plays the key role.
 Let $T'\rarrow S'$ be a morphism in $\sA_{<\kappa}$, and let
$(T'\to S')\rarrow (D\to E)$ be a morphism of morphisms in~$\sA$.
 By~(2), there exist an object $T''\in\sA_{<\kappa}$, an admissible
epimorphism $T''\twoheadrightarrow S'$ in $\sA$, and a morphism of
morphisms $(T''\twoheadrightarrow S')\rarrow (D\to E)$ in~$\sA$.
 Then the morphism $T'\oplus T''\rarrow S'$ with the components
$T'\rarrow S'$ and $T''\rarrow S'$ is an admissible epimorphism
in~$\sA$ by the dual version of~\cite[Exercise~3.11(i)]{Bueh}.
 There is a commutative triangular diagram of morphisms of morphisms
$(T'\to S')\rarrow (T'\oplus\nobreak T''\twoheadrightarrow
\nobreak S')\rarrow(D\to E)$; so the morphism $(T'\to S')\rarrow
(D\to E)$ factorizes through the object $(T'\oplus\nobreak T''
\twoheadrightarrow\nobreak S')$ in the category of morphisms in~$\sA$.

 Now, for the given fixed morphism $D\rarrow E$, denote by $\Xi$
the small category formed by all the commutative squares as in~(2)
with $T$, $S\in\sA_{<\kappa}$ and admissible epimorphisms
$T\rarrow S$.
 Then $\Xi$ is a $\kappa$\+filtered category and the colimit of
the morphisms $T_\xi\rarrow S_\xi$ over all $\xi\in\Xi$ is
the morphism $D\rarrow E$.
 These assertions follow from
Proposition~\ref{simple-diagrams-accessible}(a) and
(the proof of) Proposition~\ref{accessible-subcategory} applied
to the $\kappa$\+accessible category $\bM^2(\sA)$ of morphisms in $\sA$
and the class $\sT$ of all admissible epimorphisms in $\sA$ between
objects from~$\sA_{<\kappa}$.

 Finally, the class of all admissible epimorphisms in $\sA$ is
preserved by $\kappa$\+directed colimits in view of
Lemma~\ref{short-exact-preserved-by-directed-colimits}.
\end{proof}

\begin{prop} \label{finitely-presentables-adm-co-kernel-closed}
 In any locally $\kappa$\+coherent exact structure on
a $\kappa$\+accessible additive category $\sA$, the full subcategory
of $\kappa$\+presentable objects\/ $\sA_{<\kappa}\subset\sA$ is closed
under the kernels of admissible epimorphisms and the cokernels of
admissible monomorphisms in\/~$\sA$.
\end{prop}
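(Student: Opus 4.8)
The plan is to derive both assertions from the accessibility of the category of admissible short exact sequences. Recall from the proof of Lemma~\ref{short-exact-preserved-by-directed-colimits} that the category $\sS$ of admissible short exact sequences in $\sA$ is $\kappa$\+accessible, that its $\kappa$\+directed colimits are those computed termwise in $\bC^3(\sA)$, and that its $\kappa$\+presentable objects are precisely the short exact sequences all of whose three terms lie in $\sA_{<\kappa}$. I would treat the two claims of the proposition in parallel rather than by duality (duality being unavailable here, since the opposite of a $\kappa$\+accessible category need not be $\kappa$\+accessible), using two ``forgetful'' functors from $\sS$ to $\bM^2(\sA)$.

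For the kernels of admissible epimorphisms, consider the functor $\Phi\:\sS\rarrow\bM^2(\sA)$ sending a short exact sequence $0\to A^1\to A^2\to A^3\to0$ to its right-hand morphism $A^2\to A^3$. This $\Phi$ is fully faithful, since a commutative square between the right-hand morphisms of two short exact sequences induces a unique morphism between their kernels, and the resulting ladder is automatically commutative; and $\Phi$ preserves $\kappa$\+directed colimits, since colimits in both $\sS$ and $\bM^2(\sA)$ are computed termwise. Now, given an admissible epimorphism $D\to E$ in $\sA$ with $D$, $E\in\sA_{<\kappa}$, Proposition~\ref{simple-diagrams-accessible}(a) shows that $(D\to E)$ is $\kappa$\+presentable in $\bM^2(\sA)$; since $\Phi$ is fully faithful and preserves $\kappa$\+directed colimits, and $\Phi$ sends the short exact sequence $0\to K\to D\to E\to0$ to this $\kappa$\+presentable object, the sequence $0\to K\to D\to E\to0$ is itself $\kappa$\+presentable in $\sS$. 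By the description of $\kappa$\+presentable objects of $\sS$ recalled above, all of its terms, in particular $K$, then lie in $\sA_{<\kappa}$. For the cokernels of admissible monomorphisms I would run the identical argument with the functor $\sS\rarrow\bM^2(\sA)$ recording instead the left-hand morphism $A^1\to A^2$ of a short exact sequence, which is fully faithful because the induced morphism on cokernels is uniquely determined.

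The only point requiring care is the claim that $\kappa$\+directed colimits in $\sS$ are genuinely computed termwise, so that the two functors into $\bM^2(\sA)$ preserve them; but this is exactly what Lemma~\ref{short-exact-preserved-by-directed-colimits} provides, namely that the termwise $\kappa$\+directed colimit of admissible short exact sequences is again an admissible short exact sequence. I would deliberately avoid the more pedestrian route of writing $0\to K\to D\to E\to0$ as a $\kappa$\+directed colimit of short exact sequences of $\kappa$\+presentable objects and extracting a term admitting split epimorphisms onto $D$ and $E$, in the manner of the proof of Proposition~\ref{finitely-presentables-extension-closed}: taking kernels there only reduces the problem to another kernel of an admissible epimorphism between $\kappa$\+presentable objects, so the induction does not obviously terminate.
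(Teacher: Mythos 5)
Your proof is correct, and it takes a genuinely different route from the paper's. The paper dispatches the cokernel half in one line ($\sA_{<\kappa}$ is closed under all cokernels existing in $\sA$ by~\cite[Proposition~1.16]{AR}) and proves the kernel half by a hands-on exact-category argument: it applies Lemma~\ref{admissible-epis-characterized}\,(1)\,$\Rightarrow$\,(2) with $S=E$ to produce a termwise admissible epimorphism onto $0\to C\to D\to E\to0$ from a short exact sequence $0\to U\to T\to E\to0$ with $U$, $T\in\sA_{<\kappa}$, then invokes \cite[Proposition~2.12\,(iv)\,$\Rightarrow$\,(ii)]{Bueh} to obtain a short exact sequence $0\to U\to C\oplus T\to D\to0$, and concludes by extension-closure (Proposition~\ref{finitely-presentables-extension-closed}). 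Your route instead exploits the $\kappa$-accessibility of the category $\sS$ of admissible short exact sequences (established in the proof of Lemma~\ref{short-exact-preserved-by-directed-colimits}, together with the identification of $\sS_{<\kappa}$ as the short exact sequences with all terms in $\sA_{<\kappa}$), plus the observation that the two partial forgetful functors $\sS\to\bM^2(\sA)$ are fully faithful and preserve $\kappa$-directed colimits, hence reflect $\kappa$-presentability of any object whose image is $\kappa$-presentable. All the ingredients check out: full faithfulness follows from the kernel/cokernel universal properties built into the exact-category axioms, preservation of colimits from the fact that $\kappa$-directed colimits in $\sS$ are computed termwise, and the standard Hom-set computation then transports $\kappa$-presentability of $(D\to E)$ (Proposition~\ref{simple-diagrams-accessible}(a)) back to the whole sequence. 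What your approach buys is a uniform treatment of both halves with no appeal to Bühler's ``$3\times3$-type'' Proposition~2.12 or to Proposition~\ref{finitely-presentables-extension-closed}; what the paper's approach buys is that it stays entirely within elementary exact-category manipulations once Lemma~\ref{admissible-epis-characterized} is available. One small remark: your closing caveat about the ``pedestrian route'' failing to terminate is a fair criticism of a naive iteration, but the paper's actual argument does not iterate --- the passage to $0\to U\to C\oplus T\to D\to0$ closes the loop in a single step.
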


\begin{proof}
 The full subcategory of $\kappa$\+presentable objects is closed under
the cokernels of all morphisms (more generally, under all
$\kappa$\+small colimits) that exist in $\sA$, for any category~$\sA$
with $\kappa$\+directed colimits~\cite[Proposition~1.16]{AR}.
 The assertion concerning the kernels of admissible epimorphisms is
nontrivial (and needs the epimorphism to be admissible).

 Let $0\rarrow C\rarrow D\rarrow E\rarrow0$ be an (admissible) short
exact sequence in $\sA$ with $\kappa$\+presentable objects $D$ and~$E$.
 Put $S=E$, and consider the identity morphism $S\rarrow E$.
 By Lemma~\ref{admissible-epis-characterized}\,(1)\,$\Rightarrow$\,(2)
there exists a commutative diagram of a morphism of short exact
sequences
$$
 \xymatrix{
  C \ar@{>->}[r] & D \ar@{->>}[r] & E \\
  U \ar@{>->}[r] \ar[u] & T \ar@{->>}[r] \ar[u] & S \ar@{=}[u] 
 }
$$
with $T\in\sA_{<\kappa}$.
 Moreover, following the proof of
Lemma~\ref{admissible-epis-characterized}\,(1)\,$\Rightarrow$\,(2),
the diagram can be constructed in such a way that the object $U$
is $\kappa$\+presentable.
 By~\cite[Proposition~2.12\,(iv)\,$\Rightarrow$\,(ii)]{Bueh}, we
have a short exact sequence $0\rarrow U\rarrow C\oplus T\rarrow D
\rarrow0$ in~$\sA$.
 Now $C\oplus T\in\sA_{<\kappa}$ by
Proposition~\ref{finitely-presentables-extension-closed}, hence
$C\in\sA_{<\kappa}$.
\end{proof}

\begin{rems}
 (1)~Proposition~\ref{simple-diagrams-accessible}(c) together with
Proposition~\ref{accessible-subcategory} (applied to
the $\kappa$\+accessible category $\bC^3(\sA)$ and
the class $\sT$ of all short exact sequences in~$\sA_{<\kappa}$) provide
the following description of the class of all short exact sequences
in~$\sA$.
  A three-term complex in a locally $\kappa$\+coherent exact category
$\sA$ is a short exact sequence if and only if any morphism into it
from a three-term complex in $\sA_{<\kappa}$ factorizes through a short
exact sequence in~$\sA_{<\kappa}$.

 (2)~Moreover, Proposition~\ref{simple-diagrams-accessible}(b) together
with Proposition~\ref{accessible-subcategory}
(applied to the $\kappa$\+accessible category $\bM^3(\sA)$ and
the class $\sT$ of all short exact sequences in~$\sA_{<\kappa}$, viewed
as objects of $\bM^3(\sA)$) provide the following description.
 A pair of composable morphisms in $\sA$ is a short exact sequence if
and only if any morphism into it from a pair of composable morphisms
in $\sA_{<\kappa}$ factorizes through a short exact sequence
in~$\sA_{<\kappa}$.

 (3)~Remark~\ref{accessible-monos-epis-as-dir-colims-remark} together
with Proposition~\ref{finitely-presentables-extension-closed}
(see also Proposition~\ref{finitely-presentables-adm-co-kernel-closed})
imply that the admissible epimorphisms in~$\sA$ are precisely
the $\kappa$\+directed colimits of admissible epimorphisms in
(or between the objects of) $\sA_{<\kappa}$, and the admissible
monomorphisms in $\sA$ are precisely the $\kappa$\+directed
colimits of admissible monomorphisms in (or between the objects
of)~$\sA_{<\kappa}$.
 Hence one can similarly characterize the admissible epimorphisms and
the admissible monomorphisms in $\sA$ by factorization properties.
 In particular, we obtain the following lemma.
\end{rems}

\begin{lem} \label{admissible-monos-characterized}
 Let\/ $\sA$ be a locally $\kappa$\+coherent exact category and
$C\rarrow D$ be a morphism in\/~$\sA$.
 Then the following conditions are equivalent:
\begin{enumerate}
\item $C\rarrow D$ is an admissible monomorphism in\/~$\sA$;
\item any morphism into $C\rarrow D$ from a morphism in\/
$\sA_{<\kappa}$ factorizes through an admissible monomorphism in
(the inherited exact category structure on)\/~$\sA_{<\kappa}$.
\end{enumerate}
\end{lem}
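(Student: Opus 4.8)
The plan is to prove Lemma~\ref{admissible-monos-characterized} by reduction to the characterization of admissible epimorphisms already established in Lemma~\ref{admissible-epis-characterized}, together with the description of short exact sequences in the preceding Remarks. First I would spell out what ``a morphism into $C\rarrow D$ from a morphism in $\sA_{<\kappa}$'' means: it is a commutative square with a morphism $(C'\to D')\rarrow(C\to D)$ where $C'$, $D'\in\sA_{<\kappa}$, and condition~(2) asks that this morphism of morphisms factor as $(C'\to D')\rarrow(C''\rightarrowtail D'')\rarrow(C\to D)$ through an admissible monomorphism $C''\rightarrowtail D''$ in $\sA_{<\kappa}$.

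For the implication (1)~$\Longrightarrow$~(2): since the exact structure is locally $\kappa$\+coherent, the short exact sequence $0\rarrow C\rarrow D\rarrow E\rarrow 0$ (where $E=\coker(C\to D)$) is a $\kappa$\+directed colimit of short exact sequences $0\rarrow C_\xi\rarrow D_\xi\rarrow E_\xi\rarrow 0$ with terms in $\sA_{<\kappa}$. Given a morphism $(C'\to D')\rarrow(C\to D)$ with $C'$, $D'\in\sA_{<\kappa}$, the map $D'\rarrow D=\varinjlim D_\xi$ factors through some $D_\eta$, and after enlarging $\eta$ we may assume $C'\rarrow C=\varinjlim C_\xi$ also factors compatibly through $C_\eta$ (using that $C'$ is $\kappa$\+presentable and the square commutes in the colimit); then $C_\eta\rightarrowtail D_\eta$ is the desired admissible monomorphism in $\sA_{<\kappa}$, and the factorization $(C'\to D')\rarrow(C_\eta\rightarrowtail D_\eta)\rarrow(C\to D)$ follows. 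One must be slightly careful that the two triangles of the square-of-squares commute after passing to a large enough index, but this is the standard colimit-interchange argument and poses no real difficulty.

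For the converse (2)~$\Longrightarrow$~(1): I would argue that condition~(2) implies $C\rarrow D$ has a cokernel $E$ and that the three-term complex $C\rarrow D\rarrow E$ is a short exact sequence, by appealing to Remark~(1) (or Remark~(2)): it suffices to check that every morphism into this three-term complex from a three-term complex in $\sA_{<\kappa}$ factorizes through a short exact sequence in $\sA_{<\kappa}$. Given such a morphism $(C'\to D'\to E')\rarrow(C\to D\to E)$, restrict attention to the morphism-of-morphisms part $(C'\to D')\rarrow(C\to D)$; by~(2) it factors through an admissible monomorphism $C''\rightarrowtail D''$ in $\sA_{<\kappa}$, which extends to a short exact sequence $0\rarrow C''\rarrow D''\rarrow E''\rarrow 0$ with $E''\in\sA_{<\kappa}$ by Proposition~\ref{finitely-presentables-adm-co-kernel-closed}. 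The induced map $E'\rarrow E$ must then factor through $E''$ compatibly, since $E'=\coker(C'\to D')$ maps to $\coker(C''\to D'')=E''$ functorially. Thus the original morphism factors through the short exact sequence $0\rarrow C''\rarrow D''\rarrow E''\rarrow 0$, and Remark~(1) gives that $C\rarrow D\rarrow E$ is admissible exact; in particular $C\rarrow D$ is an admissible monomorphism.

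The main obstacle I anticipate is bookkeeping in the (2)~$\Longrightarrow$~(1) direction: one needs to know in advance that $C\rarrow D$ actually has a cokernel in $\sA$ (so that it makes sense to speak of the three-term complex $C\to D\to E$ and invoke Remark~(1)). This should follow because $\sA$, being $\kappa$\+accessible and idempotent-complete, has $\kappa$\+directed colimits, and condition~(2) exhibits $C\rarrow D$ as the $\kappa$\+filtered colimit of the admissible monomorphisms $C''\rightarrowtail D''$ appearing in the factorizations (exactly as in the proof of Lemma~\ref{admissible-epis-characterized}, using Proposition~\ref{simple-diagrams-accessible}(a) and the proof of Proposition~\ref{accessible-subcategory} applied to $\bM^2(\sA)$ and the class of admissible monomorphisms between objects of $\sA_{<\kappa}$); the cokernel is then the colimit of the $E''$'s. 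Once this colimit presentation is in hand, preservation of short exact sequences under $\kappa$\+directed colimits (Lemma~\ref{short-exact-preserved-by-directed-colimits}) finishes the argument directly, and one need not even route through Remark~(1). In fact this dual-of-Lemma~\ref{admissible-epis-characterized} route is cleaner, so I would present the proof that way, merely remarking that it is the formal dual of the proof of Lemma~\ref{admissible-epis-characterized} with admissible epimorphisms replaced by admissible monomorphisms and the roles of kernel and cokernel interchanged, as already indicated in Remark~(3).
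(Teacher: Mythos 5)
Your proposal is correct and follows essentially the same route as the paper: both directions reduce to the observation that morphisms between $\kappa$\+presentable objects are $\kappa$\+presentable in $\bM^2(\sA)$ (Proposition~\ref{simple-diagrams-accessible}(a)) together with Proposition~\ref{accessible-subcategory}, exhibiting $C\rarrow D$ as a $\kappa$\+directed colimit of admissible monomorphisms in $\sA_{<\kappa}$ and then passing to the colimit of the associated short exact sequences. The detour through Remark~(1) that you consider first is unnecessary, as you yourself conclude; the colimit presentation you settle on is exactly the paper's argument, and your handling of the cokernel-existence point (cokernels commute with the colimit) is a correct filling-in of what the paper leaves implicit.
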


\begin{proof}
 (1)~$\Longrightarrow$~(2)
 By definition, $C\rarrow D$ is a $\kappa$\+directed colimit of
admissible monomorphisms in~$\sA_{<\kappa}$.
 It remains to use Proposition~\ref{simple-diagrams-accessible}(a)
to the effect that all morphisms between objects from $\sA_{<\kappa}$
are $\kappa$\+presentable in $\bM^2(\sA)$.

 (2)~$\Longrightarrow$~(1)
 Applying Proposition~\ref{accessible-subcategory} to
the $\kappa$\+accessible category of morphisms in $\sA$, we conclude
that $C\rarrow D$ is a $\kappa$\+directed colimit of admissible
monomorphisms in~$\sA_{<\kappa}$.
 It follows immediately by the definition of a locally
$\kappa$\+coherent exact category structure that $C\rarrow D$ is
an admissible monomorphism in~$\sA$.
\end{proof}

\begin{lem} \label{sharply-larger-lemma}
 Let\/ $\lambda$ and\/ $\mu$ be a pair of regular cardinals such that\/
$\lambda\triangleleft\mu$ in the sense of\/~\cite[Definition~2.12]{AR}.
 Then any\/ locally\/ $\lambda$\+coherent exact category is locally\/
$\mu$\+coherent.
\end{lem}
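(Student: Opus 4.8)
The plan is to verify directly the two inclusions in the definition of a locally $\mu$\+coherent exact structure on~$\sA$. First note that, since $\lambda\triangleleft\mu$, the $\lambda$\+accessible additive category $\sA$ is also $\mu$\+accessible (this is the principal consequence of the relation $\triangleleft$; see \cite{AR}), so that it makes sense to ask whether its given exact structure is locally $\mu$\+coherent. Two elementary observations are used throughout: every $\mu$\+directed poset is in particular $\lambda$\+directed (because $\lambda\le\mu$), and hence every $\lambda$\+presentable object of $\sA$ is $\mu$\+presentable, so that $\sA_{<\lambda}\subseteq\sA_{<\mu}$. The first inclusion is then immediate: a $\mu$\+directed colimit in $\bC^3(\sA)$ of admissible short exact sequences is in particular a $\lambda$\+directed colimit of such, hence an admissible short exact sequence by Lemma~\ref{short-exact-preserved-by-directed-colimits}; this applies in particular to $\mu$\+directed colimits of admissible short exact sequences with terms in~$\sA_{<\mu}$.

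For the converse inclusion, let $\Sigma=(0\rarrow A^1\rarrow A^2\rarrow A^3\rarrow0)$ be an admissible short exact sequence in~$\sA$. By local $\lambda$\+coherence, there are a $\lambda$\+directed poset $\Xi$ and a $\Xi$\+indexed diagram of admissible short exact sequences $\Sigma_\xi=(0\rarrow S^1_\xi\rarrow S^2_\xi\rarrow S^3_\xi\rarrow0)$ with all $S^i_\xi\in\sA_{<\lambda}$ such that $\Sigma=\varinjlim_{\xi\in\Xi}\Sigma_\xi$ in $\bC^3(\sA)$. Let $\Phi$ be the poset, ordered by inclusion, of all $\lambda$\+directed sub-posets $\Theta\subseteq\Xi$ of cardinality~$<\mu$. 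The crucial point---and the only place where the hypothesis $\lambda\triangleleft\mu$ (rather than merely $\lambda\le\mu$) enters---is the combinatorial fact that $\Phi$ is $\mu$\+directed and $\bigcup_{\Theta\in\Phi}\Theta=\Xi$; equivalently, every subset of $\Xi$ of cardinality $<\mu$ is contained in a $\lambda$\+directed subset of $\Xi$ of cardinality $<\mu$. This is precisely what the relation $\lambda\triangleleft\mu$ asserts (see \cite[Definition~2.12]{AR}), and it is the combinatorial heart of the proof that $\lambda$\+accessible categories are $\mu$\+accessible.

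Granting this, for each $\Theta\in\Phi$ put $\Sigma_\Theta=\varinjlim_{\xi\in\Theta}\Sigma_\xi\in\bC^3(\sA)$ (the colimit being computed termwise in~$\sA$). Since $\Theta$ is $\lambda$\+directed, $\Sigma_\Theta$ is an admissible short exact sequence by Lemma~\ref{short-exact-preserved-by-directed-colimits}; since $|\Theta|<\mu$, each of its terms $\varinjlim_{\xi\in\Theta}S^i_\xi$ is a $\mu$\+small colimit of $\lambda$\+presentable, hence $\mu$\+presentable, objects of $\sA$, and so is itself $\mu$\+presentable by \cite[Proposition~1.16]{AR}; thus the terms of $\Sigma_\Theta$ lie in~$\sA_{<\mu}$. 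The inclusions $\Theta\subseteq\Theta'$ make $\Theta\mapsto\Sigma_\Theta$ a $\Phi$\+indexed diagram in $\bC^3(\sA)$, and since the $\Theta\in\Phi$ form a directed cover of $\Xi$ ($\Phi$ contains every singleton and every finite subset of $\Xi$ possessing a largest element) one has $\varinjlim_{\Theta\in\Phi}\Sigma_\Theta=\varinjlim_{\xi\in\Xi}\Sigma_\xi=\Sigma$. Hence $\Sigma$ is a $\mu$\+directed colimit of admissible short exact sequences with terms in~$\sA_{<\mu}$, which finishes the argument. The only nonroutine ingredient is the combinatorial statement about~$\Phi$, for which the relation $\lambda\triangleleft\mu$ was devised; everything else reduces to Lemma~\ref{short-exact-preserved-by-directed-colimits} and standard facts about accessible categories from \cite{AR}.
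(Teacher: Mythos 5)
Your proof is correct and takes essentially the same route as the paper: the first inclusion is handled identically via Lemma~\ref{short-exact-preserved-by-directed-colimits}, and for the converse the paper simply cites the construction from the proof of \cite[Theorem~2.11\,(iv)\,$\Rightarrow$\,(i)]{AR}, which is exactly the diagram of subcolimits over $\lambda$\+directed subposets of cardinality $<\mu$ that you spell out in detail. The only difference is one of exposition, not of substance.
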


\begin{proof}
 Any $\lambda$\+accessible category is $\mu$\+accessible
by~\cite[Theorem~2.11]{AR}.
 The class of all admissible short exact sequences in a locally
$\lambda$\+coherent exact category\/ $\sA$ is closed under
$\lambda$\+directed colimits by 
Lemma~\ref{short-exact-preserved-by-directed-colimits}; hence it is
also closed under $\mu$\+directed colimits.
 It remains to obtain any short exact sequence $0\rarrow C\rarrow D
\rarrow E\rarrow0$ in $\sA$ as a $\mu$\+directed colimit of short
exact sequences of $\mu$\+presentable objects.
 The construction from~\cite[proof of
Theorem~2.11\,(iv)\,$\Rightarrow$\,(i)]{AR} fulfills the task,
producing the desired representation from a given representation
of $0\rarrow C\rarrow D\rarrow E\rarrow0$ as a $\lambda$\+directed
colimit of short exact sequences of $\lambda$\+presentable objects.
\end{proof}

\Section{The Induced Exact Category Structure on the Ind-Objects}
\label{ind-objects-secn}

 In this section, as in the previous one, $\kappa$~denotes
a regular cardinal.
 Let $\sS$ be a small additive category.
 Consider the category $\sA=\Ind_{(\kappa)}\sS$ of ind-objects
representable by $\kappa$\+directed diagrams of objects of~$\sS$.
 Then the additive category $\sA$ is $\kappa$\+accessible, and
the full subcategory of $\kappa$\+presentable objects in $\sA$ is
naturally equivalent to the idempotent completion $\overline\sS$ of
the category~$\sS$.
 Conversely, for any $\kappa$\+accessible additive category $\sA$,
and any full additive subcategory $\sS\subset\sA_{<\kappa}$ such that
all the objects of $\sA_{<\kappa}$ are direct summands of objects
from $\sS$, one has a natural equivalence of categories $\sA\simeq
\Ind_{(\kappa)}\sS$.
 In the case of the countable cardinal $\kappa=\aleph_0$, we will use
the notation $\Ind\sS$ instead of $\Ind_{(\aleph_0)}\sS$.

 The aim of this section is to show that any exact structure on $\sS$
extends uniquely to a locally $\kappa$\+coherent exact structure
on~$\sA$ with the property that the full subcategory $\sS$ is closed
under extensions in~$\sA$.
 By Proposition~\ref{finitely-presentables-extension-closed}, the latter
condition holds automatically when the category $\sS$ is
idempotent-complete.
 In particular, for any $\kappa$\+accessible additive category $\sA$,
locally $\kappa$\+coherent exact structures on $\sA$ correspond
bijectively to exact structures on the (essentially small) additive
category~$\sA_{<\kappa}$.

 We proceed in two steps: first, extend a given exact category structure
on $\sS$ to an exact category structure on the idempotent completion
$\overline\sS=\sA_{<\kappa}$ of $\sS$, and then extend it further to
a locally $\kappa$\+coherent exact structure on
$\sA=\Ind_{(\kappa)}\sS$.
 But first of all we present a counterexample demonstrating a pitfall
involved with the question of uniqueness of extension of exact
structures from $\sS$ to~$\overline\sS$.

\begin{ex}
 Let $\sB$ denote the abelian category of morphisms of
finite-dimensional vector spaces $f\:V_0\rarrow V_1$ over a field~$k$.
 Let $\sS\subset\sB$ be the full additive subcategory whose objects are 
all the morphisms~$f$ satisfying the equation $\dim_k V_0-\dim_k V_1=
\rk f$ (where $\rk f=\dim_k\im f$ is the rank of~$f$).
 Then $\sS$ is a weakly idempotent-complete additive category and all
objects of $\sB$ are direct summands of objects from~$\sS$.
 Still all short exact sequences in $\sB$ with all the three terms
belonging to~$\sS$ are split (as the equation $\rk g=\rk f+\rk h$
implies splitness of a short exact sequence $0\rarrow f\rarrow g
\rarrow h\rarrow0$ in~$\sB$).
 So both the abelian exact structure and the split exact structure on
$\sB$ restrict to the split exact structure on~$\sS$.
 Notice that $\sS$ is \emph{not} closed under extensions in
the abelian exact structure on $\sB$, however.
\end{ex}

\begin{lem} \label{idempotent-completion-of-exact-category}
 Let $\sE$ be an exact category and $\overline\sE$ be the idempotent
completion of the additive category~$\sE$.
 Then there is a unique exact category structure on $\overline\sE$
such that $\sE$ is closed under extensions in $\overline\sE$ and
the inherited exact category structure on $\sE$ is the originally
given exact category structure.
 Specifically, the exact category structure on $\overline\sE$ is defined
by the condition that the short exact sequences in $\overline\sE$ are
the direct summands of short exact sequences in~$\sE$.
\end{lem}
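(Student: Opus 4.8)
The plan is to reduce the statement to the abelian case by means of the Gabriel--Quillen embedding. Recall that $\sE$ admits a fully faithful exact functor $\sE\hookrightarrow\sK$ into an abelian category $\sK$ which \emph{reflects} exactness, i.e.\ a sequence in $\sE$ is a conflation precisely when its image in $\sK$ is short exact, and for which $\sE$ is closed under extensions in $\sK$; this is the classical construction recalled in Section~\ref{canonical-embedding-secn} (see also~\cite{Bueh}). Since $\sK$ is abelian it is idempotent-complete, so the embedding extends along $\sE\hookrightarrow\overline\sE$ to a fully faithful functor $\overline\sE\hookrightarrow\sK$; I identify $\overline\sE$ with its essential image, the full subcategory of $\sK$ consisting of the direct summands of objects of $\sE$. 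This subcategory contains $\sE$ and is closed under direct summands in $\sK$.

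The crucial point is that $\overline\sE$ is closed under extensions in $\sK$. Let $0\rarrow X\rarrow Y\rarrow Z\rarrow0$ be a short exact sequence in $\sK$ with $X$, $Z\in\overline\sE$. Pick objects $A$, $C\in\sE$ with direct sum decompositions $A=X\oplus X'$ and $C=Z\oplus Z'$ in $\sK$. Adding to the given sequence the split short exact sequences $0\rarrow X'\rarrow X'\rarrow0\rarrow0$ and $0\rarrow0\rarrow Z'\rarrow Z'\rarrow0$, we obtain a short exact sequence $0\rarrow A\rarrow Y\oplus X'\oplus Z'\rarrow C\rarrow0$ in $\sK$ whose outer terms lie in $\sE$; since $\sE$ is closed under extensions in $\sK$, the middle term $Y\oplus X'\oplus Z'$ lies in $\sE$, whence its direct summand $Y$ lies in $\overline\sE$. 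Thus $\overline\sE$ is a full, extension-closed additive subcategory of the exact category $\sK$, so it inherits an exact category structure~\cite{Bueh} whose conflations are the sequences in $\overline\sE$ that are short exact in $\sK$. For this structure $\sE$ is closed under extensions in $\overline\sE$ (being already closed under extensions in $\sK$), and the induced structure on $\sE$ agrees with the original one (because $\sE\hookrightarrow\sK$ reflects exactness). This proves existence.

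It remains to identify this structure with the one in the statement and to prove uniqueness. A sequence in $\overline\sE$ is short exact in $\sK$ if and only if it is a direct summand, in $\bC^3(\overline\sE)$, of a conflation of $\sE$: the ``if'' direction holds because conflations of $\sE$ are short exact in $\sK$ and a $\bC^3(\sK)$-retract of a short exact sequence in the abelian category $\sK$ is again short exact; the ``only if'' direction is witnessed by the sequence $0\rarrow A\rarrow Y\oplus X'\oplus Z'\rarrow C\rarrow0$ constructed above, which has all three terms in $\sE$, hence is a conflation of $\sE$, onto which the original sequence is a direct summand. For uniqueness, let $\cS'$ be any exact structure on $\overline\sE$ for which $\sE$ is closed under extensions and which induces the original structure on $\sE$. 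Since $\overline\sE$ is idempotent-complete, $\cS'$ is closed under passing to direct summands of conflations~\cite[Section~7]{Bueh}; as every conflation of $\sE$ lies in $\cS'$, so does every $\bC^3$-retract of one, giving one inclusion. Conversely, given $0\rarrow X\rarrow Y\rarrow Z\rarrow0$ in $\cS'$, the same fattening as above yields a $\cS'$-conflation $0\rarrow A\rarrow Y\oplus X'\oplus Z'\rarrow C\rarrow0$ with $A$, $C\in\sE$; by the hypothesis that $\sE$ is closed under extensions in $(\overline\sE,\cS')$, the middle term lies in $\sE$, so this is a conflation of $\sE$ and the original sequence is a $\bC^3$-retract of it. Hence $\cS'$ is exactly the class of direct summands of conflations of $\sE$.

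The step I expect to be the main obstacle is the extension-closedness of $\overline\sE$ in $\sK$ in the second paragraph: this is where idempotent-completeness of the ambient category and extension-closedness of $\sE$ in it are jointly used, and it is essentially what makes the ``direct summands of conflations'' class into an exact structure at all; note that the failure of this mechanism in the absence of the extension-closedness hypothesis is exactly what the preceding example illustrates. One could instead bypass the embedding and verify Quillen's axioms for the ``direct summands of conflations'' class on $\overline\sE$ directly; the substantive point would then be that pullbacks of admissible epimorphisms and pushouts of admissible monomorphisms are obtained by performing the construction on an ambient conflation of $\sE$ and splitting off the complementary idempotent.
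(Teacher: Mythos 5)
Your proof is correct, but the existence half takes a genuinely different route from the paper. For existence, the paper simply invokes B\"uhler's result that the idempotent completion of an exact category carries an exact structure whose conflations are the direct summands of conflations of $\sE$ \cite[Proposition~6.13]{Bueh}; you instead pass through the Gabriel--Quillen embedding $\sE\hookrightarrow\sK$, show that $\overline\sE$ (realized as the summand-closure of $\sE$ in $\sK$) is extension-closed in the abelian category $\sK$, and let $\overline\sE$ inherit its exact structure from there. That argument is sound and self-contained modulo the classical properties of the canonical embedding (which the paper develops independently in Section~\ref{canonical-embedding-secn}, so there is no circularity), and it has the pedagogical advantage of making the verification of Quillen's axioms trivial. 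The uniqueness half is essentially identical to the paper's: both rest on the ``fattening'' trick of summing with split sequences to force all three terms into $\sE$, together with the fact that direct summands of conflations are conflations. Two minor caveats: that last fact holds in \emph{any} exact category via the pullback and pushout axioms (as the paper notes), so your appeal to idempotent-completeness of $\overline\sE$ and \cite[Section~7]{Bueh} is a slightly misplaced justification for a true statement; and your route implicitly assumes $\sE$ is essentially small (so that $\sK$ is a legitimate category), which suffices for the paper's application to $\sS$ but is a restriction not present in the lemma as stated or in the paper's proof.
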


\begin{proof}
 Uniqueness: let $0\rarrow \overline A\rarrow \overline B\rarrow
\overline C\rarrow0$ be a short exact sequence in~$\overline\sE$.
 Then there exist objects $\overline A'$ and $\overline C'\in
\overline\sE$ such that $A=\overline A\oplus\overline A'\in\sE$
and $C=\overline C\oplus\overline C'\in\sE$.
 Now we have a short exact sequence $0\rarrow A\rarrow\overline A'\oplus
\overline B\oplus\overline C'\rarrow C\rarrow0$ in~$\overline\sE$.
 Since the full subcategory $\sE$ is closed under extensions in
$\overline\sE$ by assumption, it follows that all the three terms of
the new short exact sequence belong to~$\sE$.
 So any short exact sequence in $\overline\sE$ is a direct summand of
a short exact sequence in~$\sE$.
 Conversely, in any exact category, any direct summand of a short
exact sequence is a short exact sequence, as one can easily show using
the pullback and pushout axioms.

 Existence: define the short exact sequences in $\overline\sE$ to be
the direct summands of short exact sequences in~$\sE$.
 This endows $\overline\sE$ with an exact category structure satisfying
all the conditions~\cite[Proposition~6.13]{Bueh}.
 In particular, the fact that $\sE$ is closed under extensions in
$\overline\sE$ is provable using the pullback and pushout axioms
in the exact category~$\sE$.
\end{proof}

\begin{lem} \label{admissible-epis-in-the-colimit-exact-structure}
 Let\/ $\sA$ be a $\kappa$\+accessible additive category.
 Suppose given an exact category structure on\/~$\sA_{<\kappa}$.
 Let $D\rarrow E$ be a morphism in\/~$\sA$.
 Then the following conditions are equivalent:
\begin{enumerate}
\item $D\rarrow E$ is a $\kappa$\+directed colimit in\/ $\sA$ of
admissible epimorphisms in\/~$\sA_{<\kappa}$;
\item for any object $S\in\sA_{<\kappa}$, any morphism $S\rarrow E$
in\/ $\sA$ can be included into a commutative square diagram
$$
 \xymatrix{
  D \ar[r] & E \\
  T \ar@{..>}[u] \ar@{..>>}[r] & S \ar[u]
 }
$$
\end{enumerate}
with an object $T\in\sA_{<\kappa}$ and an admissible epimorphism
$T\rarrow S$ in\/~$\sA_{<\kappa}$.
\end{lem}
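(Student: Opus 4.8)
The plan is to follow the proof of Lemma~\ref{admissible-epis-characterized} almost verbatim, with the exact category $\sA_{<\kappa}$ (endowed with the given exact structure) playing the role that $\sA_{<\kappa}$ with its inherited exact structure played there. The only change is that there is no exact structure on $\sA$ available now, so Proposition~\ref{finitely-presentables-extension-closed} and Lemma~\ref{short-exact-preserved-by-directed-colimits} cannot be invoked; but they will not be needed, since condition~(1) is now precisely the conclusion we are after rather than an intermediate statement about admissible epimorphisms in~$\sA$.

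For the implication (1)~$\Rightarrow$~(2) I would write $D\rarrow E$ as a $\kappa$\+directed colimit $\varinjlim_\xi(T_\xi\rarrow S_\xi)$ in $\bM^2(\sA)$ of admissible epimorphisms $T_\xi\twoheadrightarrow S_\xi$ in $\sA_{<\kappa}$, use the $\kappa$\+presentability of a given $S\in\sA_{<\kappa}$ to factor $S\rarrow E$ as $S\rarrow S_\xi\rarrow E$ for some index, and then form the pullback $T=T_\xi\times_{S_\xi}S$ in the exact category $\sA_{<\kappa}$; this pullback exists and its projection $T\rarrow S$ is an admissible epimorphism, because pullbacks of admissible epimorphisms exist in any exact category. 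Composing the other projection $T\rarrow T_\xi$ with the colimit structure morphism $T_\xi\rarrow D$ gives a morphism $T\rarrow D$, and the commutativity of the resulting square follows by chasing $T\rarrow D\rarrow E$ through $T_\xi$ and using that $(T_\xi\rarrow S_\xi)\rarrow(D\rarrow E)$ is a morphism in $\bM^2(\sA)$, so that the composite reduces to $T\rarrow S\rarrow S_\xi\rarrow E=T\rarrow S\rarrow E$. Here $T$ lies in $\sA_{<\kappa}$ automatically, being constructed there, so nothing has to be checked about $\kappa$\+presentability.

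For (2)~$\Rightarrow$~(1) I would reproduce the key observation from the proof of Lemma~\ref{admissible-epis-characterized}: given a morphism $(T'\to S')\rarrow(D\to E)$ with $T'\rarrow S'$ in $\sA_{<\kappa}$, applying~(2) to the composite $S'\rarrow E$ yields an admissible epimorphism $T''\twoheadrightarrow S'$ in $\sA_{<\kappa}$ together with a morphism $(T''\twoheadrightarrow S')\rarrow(D\to E)$, whence $T'\oplus T''\rarrow S'$ is an admissible epimorphism in $\sA_{<\kappa}$ by the dual of~\cite[Exercise~3.11(i)]{Bueh}, and the given morphism factors as $(T'\to S')\rarrow(T'\oplus\nobreak T''\twoheadrightarrow\nobreak S')\rarrow(D\to E)$. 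Then I would let $\Xi$ be the category of all commutative squares as in~(2) with $T,S\in\sA_{<\kappa}$ and $T\rarrow S$ an admissible epimorphism in $\sA_{<\kappa}$, the morphisms in $\Xi$ being the morphisms of such squares over $(D\rarrow E)$; by Proposition~\ref{simple-diagrams-accessible}(a) all the objects $(T\twoheadrightarrow S)$ are $\kappa$\+presentable in $\bM^2(\sA)$, and combining the key observation with (the proof of) Proposition~\ref{accessible-subcategory}, applied to the $\kappa$\+accessible category $\bM^2(\sA)$ and the class $\sT$ of admissible epimorphisms in $\sA_{<\kappa}$ regarded as objects of $\bM^2(\sA)$, one obtains that $\Xi$ is $\kappa$\+filtered and that $\varinjlim_{\xi\in\Xi}(T_\xi\rarrow S_\xi)=(D\rarrow E)$ in $\bM^2(\sA)$. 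Converting this $\kappa$\+filtered colimit into a $\kappa$\+directed one in the standard way yields the representation required in~(1).

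I expect the only delicate point to be the $\kappa$\+filteredness of $\Xi$ in the second implication, which is exactly where hypothesis~(2) is used: it supplies the factorization property that (the proof of) Proposition~\ref{accessible-subcategory} requires, while the amalgamation of parallel pairs of morphisms in $\Xi$ is reduced to the key observation in the usual way. Everything else is a transcription of the argument for Lemma~\ref{admissible-epis-characterized}, simplified by the fact that all the auxiliary pullback objects now live inside $\sA_{<\kappa}$ by construction.
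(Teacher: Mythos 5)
Your proposal is correct and follows essentially the same route as the paper: the paper's own proof simply states that the argument of Lemma~\ref{admissible-epis-characterized} carries over, with the pullback in (1)\,$\Rightarrow$\,(2) taken inside the exact category $\sA_{<\kappa}$ and the dual of~\cite[Exercise~3.11(i)]{Bueh} applied in $\sA_{<\kappa}$ in (2)\,$\Rightarrow$\,(1), which is exactly what you spell out. Your additional observations --- that the commutativity of the square is checked by a chase through the colimit cone, and that the final appeal to Lemma~\ref{short-exact-preserved-by-directed-colimits} is no longer needed because condition~(1) is itself the colimit statement --- are accurate.
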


\begin{proof}
 The proof is similar to that of
Lemma~\ref{admissible-epis-characterized}.
 The only differences are that, in the proof of
(1)\,$\Longrightarrow$\,(2), the pullback should be taked in
the exact category~$\sA_{<\kappa}$;
and in the proof of (2)\,$\Longrightarrow$\,(1), the result
of~\cite[Exercise~3.11(i)]{Bueh} should be applied in
the exact category~$\sA_{<\kappa}$.
\end{proof}

\begin{lem} \label{pullback-pushforward-data-as-filtered-colimits}
 Let\/ $\sA$ be a $\kappa$\+accessible additive category.
 Suppose given an exact category structure on\/~$\sA_{<\kappa}$.
 Then \par
\textup{(a)} any diagram
$$
 \xymatrix{
  C \ar@{>->}[r] & D \ar@{->>}[r] & E \\
  && F \ar[u]
 }
$$
in the category\/ $\sA$ whose upper line is a $\kappa$\+directed colimit
in\/ $\sA$ of short exact sequences in\/ $\sA_{<\kappa}$ can be obtained
as a $\kappa$\+directed colimit in\/ $\sA$ of similar diagrams in\/
$\sA_{<\kappa}$ with short exact sequences in the upper line; \par
\textup{(b)} any diagram
$$
 \xymatrix{
  C \ar@{>->}[r] \ar[d] & D \ar@{->>}[r] & E \\
  B
 }
$$
in the category\/ $\sA$ whose upper line is a $\kappa$\+directed colimit
in\/ $\sA$ of short exact sequences in\/ $\sA_{<\kappa}$ can be obtained
as a $\kappa$\+directed colimit in\/ $\sA$ of similar diagrams in\/
$\sA_{<\kappa}$ with short exact sequences in the upper line.
\end{lem}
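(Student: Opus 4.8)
The plan is to treat both parts by the same device: to recognize the diagrams in question as the objects of a finite diagram category over $\sA$, and then to invoke the accessibility results of Section~\ref{basic-properties-secn} together with the Appendix. For part~(a), let $\mathsf Q$ be the category of diagrams in $\sA$ of the shape displayed in the statement, i.e.\ a three-term complex $C'\rarrow D'\rarrow E'$ together with a morphism $F'\rarrow E'$; this is the comma category $(\id_{\sA}\downarrow\pi)$ for the functor $\pi\colon\bC^3(\sA)\rarrow\sA$ sending $(C'\to D'\to E')$ to~$E'$. Since $\pi$ preserves $\kappa$\+directed colimits and $\kappa$\+presentable objects (by the proof of Proposition~\ref{simple-diagrams-accessible}(c)), the category $\mathsf Q$ is $\kappa$\+accessible and an object of $\mathsf Q$ is $\kappa$\+presentable if and only if all four of its terms $C'$, $D'$, $E'$, $F'$ are $\kappa$\+presentable in~$\sA$; this follows from Proposition~\ref{comma-accessible}, or alternatively from Proposition~\ref{k-linear-functors-prop} with $k=\boZ$ (writing $\mathsf Q$ as a category of additive functors on a finite $\boZ$\+linear category), exactly as in the proof of Proposition~\ref{simple-diagrams-accessible}(c). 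Let $\sT\subset\mathsf Q$ be the essentially small class of those diagrams whose four terms lie in $\sA_{<\kappa}$ and whose three-term complex $0\rarrow C'\rarrow D'\rarrow E'\rarrow0$ is a short exact sequence in the inherited exact structure on~$\sA_{<\kappa}$. By Proposition~\ref{accessible-subcategory} applied to $\mathsf Q$ and $\sT$, an object of $\mathsf Q$ is a $\kappa$\+directed colimit of objects of $\sT$ (a colimit which is computed termwise in $\sA$, since these are diagram categories) if and only if every morphism into it from a $\kappa$\+presentable object of $\mathsf Q$ factorizes through an object of~$\sT$. So it will suffice to verify this factorization property for the given diagram $X=(C\to D\to E\leftarrow F)$, and that is the only step where the hypothesis on the upper line of $X$ enters.

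To verify it, I would fix a representation of the upper line of $X$ as a $\kappa$\+directed colimit $\varinjlim_{\xi\in\Xi}(C_\xi\to D_\xi\to E_\xi)$ of short exact sequences in $\sA_{<\kappa}$, taken in $\bC^3(\sA)$, and a representation $F=\varinjlim_{j\in J}F_j$ with $F_j\in\sA_{<\kappa}$ over a $\kappa$\+directed poset $J$ (available because $\sA$ is $\kappa$\+accessible). Given a morphism $P=(C'\to D'\to E'\leftarrow F')\rarrow X$ with $P$ a $\kappa$\+presentable object of $\mathsf Q$, the three-term complex $C'\to D'\to E'$ is $\kappa$\+presentable in $\bC^3(\sA)$ by Proposition~\ref{simple-diagrams-accessible}(c), so the induced morphism of three-term complexes factorizes through $(C_{\xi_0}\to D_{\xi_0}\to E_{\xi_0})$ for some $\xi_0\in\Xi$; since $F'$ is $\kappa$\+presentable, $F'\rarrow F$ factorizes as $F'\rarrow F_{j_0}\rarrow F$; and $F_{j_0}\rarrow F\rarrow E$ factorizes through $E_{\xi_1}$ for some $\xi_1\in\Xi$. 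Choosing $\xi\geq\xi_0,\xi_1$ produces a three-term complex $(C_\xi\to D_\xi\to E_\xi)$ in $\sA_{<\kappa}$ equipped with a morphism $F_{j_0}\to E_\xi$, a morphism to it from $(C'\to D'\to E'\leftarrow F')$, and a morphism from it to $X$; enlarging $\xi$ finitely many more times, I would arrange that the finitely many squares required for these to constitute a factorization $P\rarrow(C_\xi\to D_\xi\to E_\xi\leftarrow F_{j_0})\rarrow X$ in $\mathsf Q$ already commute in $\sA_{<\kappa}$. This is possible because each such requirement is an equality of two morphisms out of a fixed $\kappa$\+presentable object which holds in the $\kappa$\+directed colimit, hence at some stage. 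Since $(C_\xi\to D_\xi\to E_\xi\leftarrow F_{j_0})$ lies in $\sT$, this yields the desired factorization and proves part~(a).

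Part~(b) will be entirely analogous, with $\mathsf Q$ now the category of diagrams $F'\leftarrow C'\rarrow D'\rarrow E'$ in $\sA$ (a three-term complex together with a morphism out of its first term), realized as the comma category $(\pi'\downarrow\id_{\sA})$ for $\pi'\colon\bC^3(\sA)\rarrow\sA$, $(C'\to D'\to E')\mapsto C'$, and $\sT\subset\mathsf Q$ the class of such diagrams with terms in $\sA_{<\kappa}$ and short exact upper line; given $P\rarrow X$ with $P$ $\kappa$\+presentable one factorizes the upper line of $P$ through some $(C_{\xi_0}\to D_{\xi_0}\to E_{\xi_0})$, factorizes $B'\rarrow B$ through some $B_{k_0}$ (writing $B=\varinjlim_k B_k$ with $B_k\in\sA_{<\kappa}$), factorizes $C_{\xi_0}\rarrow C\rarrow B$ through some $B_{k_1}$, picks $k\geq k_0,k_1$, and enlarges $k$ finitely many times as above to land in $\sT$. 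I expect the only real work here, beyond the bookkeeping, to be in making sure that the finite diagram categories $\mathsf Q$ are indeed covered by Proposition~\ref{comma-accessible} (or Proposition~\ref{k-linear-functors-prop}) with the stated description of their $\kappa$\+presentable objects, and in organizing the finitely many ``enlarge the index'' steps needed to promote commutativity of squares from $\sA$ to $\sA_{<\kappa}$; the short-exactness of the sequences in the colimit is used only at the very end, to ensure that the intermediate diagram through which $P\rarrow X$ factorizes actually belongs to~$\sT$.
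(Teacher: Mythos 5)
Your proof is correct, and it rests on the same two ingredients as the paper's own argument --- Proposition~\ref{comma-accessible} for the accessibility of the relevant diagram category and Proposition~\ref{accessible-subcategory} for recognizing $\kappa$\+directed colimits of a prescribed set of $\kappa$\+presentable objects --- but you combine them in the opposite order, and that costs you an explicit diagram chase that the paper avoids. The paper first applies Propositions~\ref{simple-diagrams-accessible}(c) and~\ref{accessible-subcategory} inside $\bC^3(\sA)$ to observe that the full subcategory $\sE$ of all $\kappa$\+directed colimits of short exact sequences in $\sA_{<\kappa}$ is itself a $\kappa$\+accessible category whose $\kappa$\+presentable objects are precisely the short exact sequences in $\sA_{<\kappa}$; it then realizes the diagrams of part~(a) as the comma category $\id_\sA\downarrow G$ for the functor $G\:\sE\rarrow\sA$, \,$(C\to D\to E)\mapsto E$ (and dually for part~(b)), so that Proposition~\ref{comma-accessible} identifies the $\kappa$\+presentable objects of this comma category as exactly the diagrams in your class $\sT$, and the conclusion is then nothing more than the statement that every object of a $\kappa$\+accessible category is a $\kappa$\+directed colimit of $\kappa$\+presentable ones. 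You instead form the comma category over all of $\bC^3(\sA)$ and then must verify the factorization criterion of Proposition~\ref{accessible-subcategory} by hand for the given diagram: factoring the complex part through some stage~$\xi_0$, factoring $F'\rarrow F$ through some $F_{j_0}$, factoring $F_{j_0}\rarrow E$ through some stage~$\xi_1$, and enlarging the index finitely many times to force the two squares to commute using the $\kappa$\+presentability of~$F'$. That verification is sound as written (and the analogous steps for part~(b) go through the same way), but it essentially reproduces the argument hidden inside the proof of Proposition~\ref{comma-accessible}. The paper's ordering buys brevity; yours makes the mechanism visible at the price of duplicating work the appendix already does.
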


\begin{proof}
 Denote by $\sE$ the full subcategory in $\bC^3(\sA)$ consising of
all the $\kappa$\+directed colimits of short exact sequences
in~$\sA_{<\kappa}$.
 By Propositions~\ref{simple-diagrams-accessible}(c)
and~\ref{accessible-subcategory}, \,$\sE$~is a $\kappa$\+accessible
category, and its $\kappa$\+presentable objects are the short exact
sequences in $\sA_{<\kappa}$ (since the class of all short exact
sequences in $\sA_{<\kappa}$ is closed under direct summands
in~$\bC^3(\sA)$).
 Now both the parts~(a) and~(b) are provable by applying
Proposition~\ref{comma-accessible} to a suitable pair of functors.
 For part~(a), take $\sK=\sM=\sA$, \,$\sL=\sE$, the identity functor
$F\:\sK\rarrow\sM$, and the functor $G\:\sL\rarrow\sM$ assigning
the object $E$ to a three-term complex $C\rarrow D\rarrow E$.
 For part~(b), take $\sK=\sE$, \,$\sL=\sM=\sA$, the identity functor
$G\:\sL\rarrow\sM$, and the functor $F\:\sK\rarrow\sM$ assigning
the object $C$ to a three-term complex $C\rarrow D\rarrow E$.
\end{proof}

\begin{prop} \label{colimit-exact-structure-prop}
 Let\/ $\sA$ be a $\kappa$\+accessible additive category.
 Suppose given an exact category structure on\/~$\sA_{<\kappa}$.
 Then the class of all $\kappa$\+directed colimits in\/ $\sA$ of short
exact sequences in\/ $\sA_{<\kappa}$ is an exact category structure
on\/~$\sA$.
 Any short exact sequence in this exact category structure on\/ $\sA$
whose terms belong to\/ $\sA_{<\kappa}$ is a short exact sequence in
the original exact category structure on\/~$\sA_{<\kappa}$.
 Consequently, the resulting exact structure on\/ $\sA$ is locally
$\kappa$\+coherent.
\end{prop}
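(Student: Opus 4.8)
The plan is to show that the class $\cS$ of three\+term complexes in $\sA$ arising as $\kappa$\+directed colimits in $\bC^3(\sA)$ of short exact sequences with terms in $\sA_{<\kappa}$ is a Quillen exact structure in the sense of~\cite[Definition~2.1]{Bueh}, then to identify the exact structure it induces on $\sA_{<\kappa}$, and finally to read off local $\kappa$\+coherence. The recurring mechanism is that $\kappa$\+directed colimits commute with $\kappa$\+small limits in $\Sets$, applied to $\Hom$\+sets.

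First I would verify that every member of $\cS$ is a kernel\+cokernel pair in $\sA$. Write such a member as $\varinjlim_{\xi\in\Xi}(0\rarrow C_\xi\rarrow D_\xi\rarrow E_\xi\rarrow0)$, where $\Xi$ is a $\kappa$\+directed poset, each row is short exact in $\sA_{<\kappa}$, and the (termwise) colimit is $0\rarrow C\rarrow D\rarrow E\rarrow0$. To see that $C\rarrow D$ is a kernel of $D\rarrow E$, I would test against an arbitrary $Y\in\sA$: writing $Y=\varinjlim_j Y_j$ with $Y_j\in\sA_{<\kappa}$ and using $\sA(\varinjlim_j Y_j,-)=\varprojlim_j\sA(Y_j,-)$ together with the left exactness of $\varprojlim$, the claim reduces to the case $Y\in\sA_{<\kappa}$; there $\sA(Y,C)=\varinjlim_\xi\sA(Y,C_\xi)$, and similarly for $D$ and $E$, each sequence $0\rarrow\sA(Y,C_\xi)\rarrow\sA(Y,D_\xi)\rarrow\sA(Y,E_\xi)$ is exact because $C_\xi\rarrow D_\xi$ is a kernel of $D_\xi\rarrow E_\xi$ in $\sA_{<\kappa}$, and exactness is preserved by the $\kappa$\+directed colimit over~$\xi$. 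Dually, to see that $D\rarrow E$ is a cokernel of $C\rarrow D$, I would use $\sA(E,Y)=\varprojlim_\xi\sA(E_\xi,Y)$ (and similarly for $D$, $C$) to reduce, by left exactness of $\varprojlim_\xi$, to the exactness of each $0\rarrow\sA(E_\xi,Y)\rarrow\sA(D_\xi,Y)\rarrow\sA(C_\xi,Y)$; writing $Y=\varinjlim_j Y_j$ and using that $C_\xi$, $D_\xi$, $E_\xi$ are $\kappa$\+presentable, the same principle reduces this further to $Y\in\sA_{<\kappa}$, where it holds since $D_\xi\rarrow E_\xi$ is a cokernel of $C_\xi\rarrow D_\xi$ in~$\sA_{<\kappa}$.

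Next I would check the exact category axioms. Closure of $\cS$ under isomorphism, and the axioms [E0], [E0$^{\mathrm{op}}$] (the identity of the zero object is an admissible epimorphism and an admissible monomorphism), are immediate from constant diagrams. The substance is [E2] and [E2$^{\mathrm{op}}$]. For [E2], let $A\rarrow B$ be an admissible monomorphism — it sits, by the previous paragraph, in a well\+defined member $0\rarrow A\rarrow B\rarrow D\rarrow0$ of $\cS$ with $D=\coker(A\rarrow B)$ — and let $A\rarrow B'$ be arbitrary. I would apply Lemma~\ref{pullback-pushforward-data-as-filtered-colimits}(b) to present the configuration $B'\larrow A\rarrow B\rarrow D$ as a $\kappa$\+directed colimit of configurations $B'_\alpha\larrow A_\alpha\rarrow B_\alpha\rarrow D_\alpha$ in $\sA_{<\kappa}$ with short exact upper rows, form the pushouts $P_\alpha=B'_\alpha\sqcup_{A_\alpha}B_\alpha$ in the exact category $\sA_{<\kappa}$ (so $B'_\alpha\rarrow P_\alpha$ is an admissible monomorphism and $0\rarrow B'_\alpha\rarrow P_\alpha\rarrow D_\alpha\rarrow0$ is exact), and pass to the colimit. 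On one hand this yields a member $0\rarrow B'\rarrow\varinjlim_\alpha P_\alpha\rarrow D\rarrow0$ of $\cS$, so $B'\rarrow\varinjlim_\alpha P_\alpha$ is an admissible monomorphism; on the other hand, testing against objects of $\sA$ via $\Hom$\+sets — first against $Y\in\sA_{<\kappa}$, then bootstrapping to arbitrary $Y$ through $Y=\varinjlim_j Y_j$ and the commutation of $\kappa$\+directed colimits with finite limits of sets — shows that $\varinjlim_\alpha P_\alpha$ is the pushout $B'\sqcup_A B$ in~$\sA$. The argument for [E2$^{\mathrm{op}}$] is the mirror one, using Lemma~\ref{pullback-pushforward-data-as-filtered-colimits}(a) and the pullbacks of admissible epimorphisms in $\sA_{<\kappa}$. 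The remaining axioms [E1], [E1$^{\mathrm{op}}$] (closure of admissible monomorphisms and epimorphisms under composition) can then be treated by the same method, presenting a composable pair of admissible monomorphisms together with its cokernels as a $\kappa$\+directed colimit of such configurations in $\sA_{<\kappa}$ via Proposition~\ref{comma-accessible}; alternatively they may be invoked as consequences of the others, see \cite{Bueh} and the references therein.

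Finally, for the second assertion, I would note that the short exact sequences of $\sA_{<\kappa}$, regarded inside $\bC^3(\sA)$, form a class closed under direct summands, since $\sA_{<\kappa}$ is idempotent\+complete by~\cite{AR} and direct summands of short exact sequences in an exact category are again short exact by~\cite{Bueh}. Hence, by Propositions~\ref{simple-diagrams-accessible}(c) and~\ref{accessible-subcategory}, the full subcategory $\cS\subset\bC^3(\sA)$ is $\kappa$\+accessible, closed under $\kappa$\+directed colimits, and its $\kappa$\+presentable objects are precisely the short exact sequences of $\sA_{<\kappa}$. A member of $\cS$ whose three terms all lie in $\sA_{<\kappa}$ is $\kappa$\+presentable in $\bC^3(\sA)$ by Proposition~\ref{simple-diagrams-accessible}(c), hence $\kappa$\+presentable in $\cS$, hence a short exact sequence of $\sA_{<\kappa}$; this is the second assertion. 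Local $\kappa$\+coherence is then immediate: by construction the admissible short exact sequences of $(\sA,\cS)$ are the $\kappa$\+directed colimits of short exact sequences of $\sA_{<\kappa}$, and by the second assertion these are exactly the $\kappa$\+directed colimits of admissible short exact sequences of $(\sA,\cS)$ with terms in~$\sA_{<\kappa}$. I expect the only genuinely delicate points to be the kernel\+cokernel verification and the comparison of pushouts computed in $\sA_{<\kappa}$ versus in $\sA$; both reduce to the single $\Hom$\+set argument above, and the composition axioms are deliberately kept peripheral.
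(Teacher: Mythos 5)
Your overall strategy coincides with the paper's: the pushout and pullback axioms are handled via Lemma~\ref{pullback-pushforward-data-as-filtered-colimits} together with the observation that the levelwise pushouts/pullbacks formed in $\sA_{<\kappa}$ remain pushouts/pullbacks in $\sA$ and pass to the $\kappa$\+directed colimit (which you justify, as the paper indicates one should, by the commutation of $\kappa$\+directed colimits with finite limits of $\Hom$\+sets); the second assertion is obtained exactly as in the paper from Propositions~\ref{simple-diagrams-accessible}(c) and~\ref{accessible-subcategory} plus closure of the class of short exact sequences of $\sA_{<\kappa}$ under direct summands; and the third assertion is read off in the same way. Your explicit verification that the colimit sequences are kernel--cokernel pairs is correct and usefully supplements the paper's terser text.

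The one genuine gap is the composition axioms [E1]/[E1${}^{\mathrm{op}}$], which you declare peripheral but which neither of your two suggested routes actually covers. First, presenting a composable pair of admissible monomorphisms together with its cokernels as a $\kappa$\+directed colimit of such configurations in $\sA_{<\kappa}$ is not an instance of Proposition~\ref{comma-accessible}: gluing two short exact sequences along a shared object (the middle term of one identified with the sub-object term of the other) is a strict fiber product of accessible categories, not a comma-category, and none of the lemmas you cite supplies the required presentation. Second, it is not true that both composition axioms follow from the remaining ones: by Keller's observation recorded in~\cite{Bueh}, exactly one of [E1], [E1${}^{\mathrm{op}}$] is redundant given the other, so at least one must be proved directly. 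The paper closes this point differently: Lemma~\ref{admissible-epis-in-the-colimit-exact-structure} characterizes the would-be admissible epimorphisms by a lifting property against objects of $\sA_{<\kappa}$, and that property is visibly closed under composition (lift twice and compose the two resulting admissible epimorphisms inside $\sA_{<\kappa}$). Substituting that argument for your treatment of [E1${}^{\mathrm{op}}$] and then invoking Keller's redundancy for [E1] repairs the proof; the rest of your proposal stands as written.
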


\begin{proof}
 To begin with, notice that the inclusion
$\sA_{<\kappa}\rarrow\sA$ preserves all cokernels that happen to exist
in the category~$\sA_{<\kappa}$.
 Indeed, let $g\:T\rarrow S$ be a cokernel of a morphism
$f\:U\rarrow T$ in $\sA_{<\kappa}$.
 Consider an arbitrary object $B\in\sA$.
 Then there exists a $\kappa$\+directed diagram
$(B_\upsilon)_{\upsilon\in\Upsilon}$ in $\sA_{<\kappa}$ such that
$B=\varinjlim B_\upsilon$.
 For any object $V\in\sA_{<\kappa}$, we have $\Hom_\sA(V,B)\simeq
\varinjlim_{\upsilon\in\Upsilon}\Hom_\sA(V,B_\upsilon)$.
 By assumption, the map $\Hom_\sA(g,B_\upsilon)$ is the kernel of
the map $\Hom_\sA(f,B_\upsilon)$ in the category of abelian groups
$\Ab$ for every $\upsilon\in\Upsilon$.
 Using the fact that directed colimits commute with kernels in $\Ab$,
one concludes that the map $\Hom_\sA(g,B)$ is the kernel of the map
$\Hom_\sA(f,B)$ in~$\Ab$.
 So the morphism~$g$ is a cokernel of the morphism~$f$ in~$\sA$.

 Now let $(0\to U_\xi\to T_\xi\to S_\xi\to0)_{\xi\in\Xi}$
be a $\kappa$\+directed diagram of short exact sequences in
$A_{<\kappa}$, indexed by a $\kappa$\+directed poset~$\Xi$.
 Let $0\rarrow C\rarrow D\rarrow E\rarrow0$ be the related short
sequence of $\kappa$\+directed colimits in~$\sA$.
 Denote the morphisms involved by $f_\xi\:U_\xi\rarrow T_\xi$ and
$g_\xi\:T_\xi\rarrow S_\xi$, and put $f=\varinjlim_{\xi\in\Xi}
f_\xi\:C\rarrow D$ and $g=\varinjlim_{\xi\in\Xi}g_\xi\:D\rarrow E$.
 Then the morphism~$g$ is the cokernel of the morphism~$f$ in
the category $\sA$, because the morphisms~$g_\xi$ are the cokernels
of the morphisms~$f_\xi$ in $\sA_{<\kappa}$ (hence also in~$\sA$),
and any existing colimit functors preserve any existing cokernels
in any additive category.

 To show that the morphism $f\:C\rarrow D$ is the kernel of
the morphism $g\:D\rarrow E$ in the category $\sA$, consider
an arbitrary object $B\in\sA$.
 Then there exists a ($\kappa$\+directed) diagram
$(B_\upsilon)_{\upsilon\in\Upsilon}$ in the category $\sA_{<\kappa}$
such that $B=\varinjlim_{\upsilon\in\Upsilon}B_\upsilon$.
 For any object $A\in\sA$, we have $\Hom_\sA(B,A)\simeq
\varprojlim_{\upsilon\in\Upsilon}\Hom_\sA(B_\upsilon,A)$.
 On the other hand, the functor $\Hom_\sA(B_\upsilon,{-})\:
\sA\rarrow\Ab$ preserves $\kappa$\+directed colimits for every
$\upsilon\in\Upsilon$.
 By assumption, the map $\Hom_\sA(B_\upsilon,f_\xi)$ is the kernel
of the map $\Hom_\sA(B_\upsilon,g_\xi)$ in $\Ab$ for every
$\upsilon\in\Upsilon$ and $\xi\in\Xi$.
 Since all limits and directed colimits commute with kernels in $\Ab$,
we can conclude that the map $\Hom_\sA(B,f)$ is the kernel of the map
$\Hom_\sA(B,g)$ in~$\Ab$.
 So the morphism~$f$ is the kernel of the morphism~$g$ in~$\sA$.

 It is clear from
Lemma~\ref{admissible-epis-in-the-colimit-exact-structure} that
the class of all admissible epimorphisms in the would-be exact
structure on $\sA$ is closed under compositions.
 The pullback and pushout axioms for short exact sequences in
$\sA$ are provable using
Lemma~\ref{pullback-pushforward-data-as-filtered-colimits}.

 Here, in the case of the pullbacks, one also needs to use the fact
that the $\kappa$\+directed colimits of objects from $\sA_{<\kappa}$
in $\sA$ take pullbacks in $\sA_{<\kappa}$ to pullbacks in~$\sA$
(more generally, this holds for all $\kappa$\+small limits that happen
to exist in~$\sA_{<\kappa}$).
 In the case of the pushouts, the fact that the inclusion
$\sA_{<\kappa}\rarrow\sA$ preserves those pushouts (and more generally,
$\kappa$\+small colimits) that happen to exist in $\sA_{<\kappa}$
needs to be used.
 Alternatively, it is convenient to check the axioms Ex2(a\+-b)
from~\cite[Section~A.3]{Partin} and use~\cite[Corollary~A.3]{Partin}.

 The second assertion of the proposition follows from
Proposition~\ref{simple-diagrams-accessible}(c) and
Proposition~\ref{accessible-subcategory} (applied to
the $\kappa$\+accessible category $\bC^3(\sA)$ and the class $\sT$
of all short exact sequences in~$\sA_{<\kappa}$).
 Here one needs to use the observation that the class of all short exact
sequences in $\sA_{<\kappa}$ (as in any exact category) is closed under
direct summands, as mentioned in the proofs of
Lemmas~\ref{idempotent-completion-of-exact-category}
and~\ref{pullback-pushforward-data-as-filtered-colimits}.
 The third and last assertion of the proposition follows from
the previous ones.
\end{proof}

\begin{cor} \label{ind-objects-loc-coh-exact-structure}
 For any small exact category\/ $\sS$, there exists a unique locally
$\kappa$\+coherent exact structure on the additive category of
ind-objects\/ $\Ind_{(\kappa)}\sS$ such that\/ $\sS$ is closed under
extensions in this exact category structure on\/ $\Ind_{(\kappa)}\sS$
and the inherited exact structure  on\/ $\sS$ is the given one.
\end{cor}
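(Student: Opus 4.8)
The plan is to bootstrap from the two preceding results: Lemma~\ref{idempotent-completion-of-exact-category}, which governs the passage from $\sS$ to its idempotent completion, and Proposition~\ref{colimit-exact-structure-prop}, which governs the passage from $\sA_{<\kappa}$ to $\sA=\Ind_{(\kappa)}\sS$; here one uses that $\sA$ is $\kappa$\+accessible with $\sA_{<\kappa}$ naturally equivalent to the idempotent completion $\overline\sS$ of $\sS$. For \emph{existence}, I would first apply Lemma~\ref{idempotent-completion-of-exact-category} to extend the given exact structure on $\sS$ to the exact structure on $\overline\sS$ in which $\sS$ is closed under extensions, and transport it along $\overline\sS\simeq\sA_{<\kappa}$ to equip $\sA_{<\kappa}$ with an exact structure. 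Then Proposition~\ref{colimit-exact-structure-prop} produces a locally $\kappa$\+coherent exact structure on $\sA$, namely the one whose admissible short exact sequences are the $\kappa$\+directed colimits in $\bC^3(\sA)$ of admissible short exact sequences with terms in $\sA_{<\kappa}$, and its second assertion says that the exact structure induced back on $\sA_{<\kappa}$ is the one just placed there, so after restricting once more the induced structure on $\sS$ is the originally given one. It then remains to verify that $\sS$ is closed under extensions in $\sA$: given an admissible short exact sequence $0\rarrow S^1\rarrow A\rarrow S^3\rarrow0$ in $\sA$ with $S^1$, $S^3\in\sS\subseteq\sA_{<\kappa}$, Proposition~\ref{finitely-presentables-extension-closed} gives $A\in\sA_{<\kappa}=\overline\sS$; the second assertion of Proposition~\ref{colimit-exact-structure-prop} makes this an admissible short exact sequence in $\overline\sS$; and closure of $\sS$ under extensions in $\overline\sS$ yields $A\in\sS$.

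For \emph{uniqueness}, let $\mathcal E_1$ and $\mathcal E_2$ be two locally $\kappa$\+coherent exact structures on $\sA$, each with $\sS$ closed under extensions and each inducing the given exact structure on $\sS$. By Proposition~\ref{finitely-presentables-extension-closed}, $\sA_{<\kappa}=\overline\sS$ is closed under extensions in $\sA$ with respect to each $\mathcal E_i$, so it inherits an exact structure from each; I claim the two inherited exact structures on $\overline\sS$ coincide. Indeed, arguing as in the existence part, $\sS$ is closed under extensions in $\overline\sS$ with respect to each of them (an admissible short exact sequence in $\overline\sS$ with outer terms in $\sS$ is one in $\sA$ with outer terms in $\sS$), and each restricts on $\sS$ to the given exact structure; so the \emph{uniqueness} clause of Lemma~\ref{idempotent-completion-of-exact-category} forces the two inherited structures on $\overline\sS$ to be equal. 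Finally, a locally $\kappa$\+coherent exact structure on $\sA$ is pinned down by the exact structure it induces on $\sA_{<\kappa}$: by definition its admissible short exact sequences are exactly the $\kappa$\+directed colimits in $\bC^3(\sA)$ of admissible short exact sequences with terms in $\sA_{<\kappa}$, and these latter are precisely the admissible short exact sequences of the inherited exact structure on $\sA_{<\kappa}$. Since $\mathcal E_1$ and $\mathcal E_2$ induce the same exact structure on $\sA_{<\kappa}$, they have the same admissible short exact sequences, whence $\mathcal E_1=\mathcal E_2$.

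Everything here is routine given the earlier results; the one point demanding care is the three-layer bookkeeping among $\sS$, $\overline\sS=\sA_{<\kappa}$, and $\sA$ — checking that the properties ``closed under extensions'' and ``inducing the given exact structure'' transfer correctly up and down these inclusions — together with the (essentially definitional) observation that a locally $\kappa$\+coherent exact structure on $\sA$ is determined by its restriction to $\sA_{<\kappa}$, which is what lets the uniqueness statement on $\sA$ be reduced to the uniqueness already established in Lemma~\ref{idempotent-completion-of-exact-category}.
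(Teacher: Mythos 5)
Your proposal is correct and follows essentially the same route as the paper, whose proof is just the one-line instruction to combine Lemma~\ref{idempotent-completion-of-exact-category}, Proposition~\ref{colimit-exact-structure-prop}, and Proposition~\ref{finitely-presentables-extension-closed} after identifying $\sA_{<\kappa}$ with $\overline\sS$. You have merely written out the bookkeeping among $\sS$, $\overline\sS$, and $\sA$ that the paper leaves implicit, and done so accurately.
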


\begin{proof}
 Notice that $\sA=\Ind_{(\kappa)}\sS$ is a $\kappa$\+accessible
additive category with the full subcategory of $\kappa$\+presentable
objects $\sA_{<\kappa}$ naturally equivalent to the idempotent
completion~$\overline\sS$.
 Then use Lemma~\ref{idempotent-completion-of-exact-category},
Proposition~\ref{colimit-exact-structure-prop}, and
Proposition~\ref{finitely-presentables-extension-closed}.
\end{proof}

\begin{thm} \label{lc-exact-structures-bijective-correspondence}
 For any $\kappa$\+accessible additive category\/ $\sA$, there is
a natural bijective correspondence between exact structures on
the small additive category\/ $\sA_{<\kappa}$ and locally
$\kappa$\+coherent exact structures on the additive category\/~$\sA$.
 To any exact structure on\/ $\sA_{<\kappa}$, the exact structure on
$\sA$ given by the class of all $\kappa$\+directed colimits in\/ $\sA$
of short exact sequence in\/ $\sA_{<\kappa}$ is assigned.
 To any locally $\kappa$\+coherent exact structure on\/ $\sA$,
the inherited exact structure on the full subcategory\/
$\sA_{<\kappa}\subset\sA$ is assigned.
\end{thm}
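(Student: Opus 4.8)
The plan is to show that the two assignments described in the statement are well-defined and mutually inverse. Well-definedness is already in hand: by Proposition~\ref{colimit-exact-structure-prop}, the class of $\kappa$\+directed colimits in $\sA$ of short exact sequences in $\sA_{<\kappa}$ is indeed a locally $\kappa$\+coherent exact structure on $\sA$, so the first assignment makes sense; and by Proposition~\ref{finitely-presentables-extension-closed}, in any locally $\kappa$\+coherent exact structure on $\sA$ the full subcategory $\sA_{<\kappa}$ is closed under extensions and hence carries the inherited exact structure, so the second assignment makes sense. It then remains to verify that the two round-trip compositions are the respective identity maps.

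First I would take the round-trip starting from an exact structure $\mathcal X$ on $\sA_{<\kappa}$: form the colimit exact structure on $\sA$, then restrict it back to $\sA_{<\kappa}$. By definition, a short exact sequence of the restricted structure is a short exact sequence of the colimit structure all of whose three terms lie in $\sA_{<\kappa}$; by the second assertion of Proposition~\ref{colimit-exact-structure-prop}, every such sequence is a short exact sequence of $\mathcal X$. Conversely, any short exact sequence of $\mathcal X$ is the $\kappa$\+directed colimit of the constant diagram on itself, hence lies in the colimit structure with terms in $\sA_{<\kappa}$, hence in the restricted structure. So the restricted structure is exactly $\mathcal X$.

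Next I would take the round-trip starting from a locally $\kappa$\+coherent exact structure $\mathcal S$ on $\sA$: restrict to the inherited structure on $\sA_{<\kappa}$, then form the colimit exact structure on $\sA$. The admissible short exact sequences of the inherited structure on $\sA_{<\kappa}$ are, by the meaning of ``inherited structure on an extension-closed full subcategory'', precisely the admissible short exact sequences of $\mathcal S$ all of whose terms lie in $\sA_{<\kappa}$. By the very definition of a locally $\kappa$\+coherent exact structure, the admissible short exact sequences of $\mathcal S$ are exactly the $\kappa$\+directed colimits in $\bC^3(\sA)$ of such sequences, that is, exactly the short exact sequences of the colimit structure built from the inherited one. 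Hence this colimit structure coincides with $\mathcal S$. Together with the previous paragraph, this shows that the two assignments are mutually inverse bijections; naturality is clear, since both constructions are canonical (for instance, an equivalence between $\kappa$\+accessible additive categories restricts to an equivalence between their full subcategories of $\kappa$\+presentable objects and matches the two correspondences).

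I do not expect a genuine obstacle here: the statement is essentially a bookkeeping synthesis of Propositions~\ref{finitely-presentables-extension-closed} and~\ref{colimit-exact-structure-prop}. The only points that call for care are that the second assertion of Proposition~\ref{colimit-exact-structure-prop} supplies just one of the two inclusions needed in the first round-trip (the reverse being the trivial ``constant diagram'' observation), and that the second round-trip is nothing more than unravelling the definition of ``locally $\kappa$\+coherent'', so I would take care to invoke that definition directly rather than any of its derived characterizations.
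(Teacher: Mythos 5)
Your proof is correct and takes essentially the same route as the paper, which simply cites Propositions~\ref{colimit-exact-structure-prop} and~\ref{finitely-presentables-extension-closed}; you have merely spelled out the routine verification that the two assignments are well-defined and mutually inverse, using exactly the ingredients those propositions provide (the second assertion of Proposition~\ref{colimit-exact-structure-prop} for one inclusion of the first round-trip, the constant-diagram observation for the other, and the definition of local $\kappa$\+coherence for the second round-trip).
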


\begin{proof}
 Follows from Propositions~\ref{colimit-exact-structure-prop}
and~\ref{finitely-presentables-extension-closed}.
\end{proof}

\Section{Example: Maximal Locally Coherent Exact Structure}
\label{example-maximal-subsecn}

 Notice that there is a natural partial order on the class of all exact
structures on any given additive category (given by inclusion of
classes of admissible short exact sequences).
 Moreover, the intersection of any nonempty set/class of classes of
short exact sequences defining exact structures is again a class
of short exact sequences defining an exact structure.
 Furthermore, any additive category has a unique maximal exact
category structure~\cite{SW,Cri,Rum}.
 Hence the set of all exact structures on a small additive category is
a complete lattice.

 Let $\kappa$~be a regular cardinal and $\sA$ be a $\kappa$\+accessible
additive category.
 Then Theorem~\ref{lc-exact-structures-bijective-correspondence}
provides an order-preserving bijection between the poset of locally
$\kappa$\+coherent exact structures on $\sA$ and the poset of all
exact structures on the essentially small additive category
$\sA_{<\kappa}$ of all $\kappa$\+presentable objects in~$\sA$.
 Thus the poset of all locally $\kappa$\+coherent exact structures on
$\sA$ is also a complete lattice.
 In this section, we are interested in the \emph{maximal} locally
$\kappa$\+coherent exact structure on~$\sA$.

 Recall that a category $\sA$ is said to be \emph{locally
$\kappa$\+presentable}~\cite[Definition~1.17 and Theorem~1.20]{AR}
if $\sA$ is $\kappa$\+accessible and all colimits exist in~$\sA$.
 Locally $\aleph_0$\+presentable categories are called \emph{locally
finitely presentable}~\cite[Definition~1.9 and Theorem~1.11]{AR}.

 The following lemma describes the maximal exact structure on
the category of $\kappa$\+presentable objects $\sA_{<\kappa}$ of
a locally $\kappa$\+presentable abelian category~$\sA$.
 Given an additive category $\sE$ and two morphisms $f$, $g$ in
$\sE$, we denote by $\ker_\sE(g)$ and $\coker_\sE(f)$ the kernel
and cokernel of $g$ and~$f$ computed in the category~$\sE$.

 The following lemma was essentially already explained in
the beginning paragraphs of the proof of
Proposition~\ref{colimit-exact-structure-prop}.

\begin{lem} \label{maximal-on-kappa-presentables}
 Let\/ $\sA$ be a locally $\kappa$\+presentable abelian category
and $S\overset f\rarrow T\overset g\rarrow U$ be a composable pair
of morphisms in the full subcategory\/ $\sA_{<\kappa}\subset\sA$.
 In this context: \par
\textup{(a)} if $g=\coker_{\sA_{<\kappa}}(f)$, then one also has
$g=\coker_\sA(f)$; \par
\textup{(b)} if $f=\ker_{\sA_{<\kappa}}(g)$, then one also has
$f=\ker_\sA(g)$. \par
 Consequently, the exact category structure on\/ $\sA_{<\kappa}$
inherited from the abelian exact structure on\/ $\sA$ is
the maximal exact structure on\/~$\sA_{<\kappa}$.
\end{lem}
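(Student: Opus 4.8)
The plan is to establish the kernel and the cokernel assertions separately — the two cases are genuinely asymmetric here — and then to read off the statement about the maximal exact structure.

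The cokernel assertion I would treat formally. Finite colimits are $\kappa$\+small, so the full subcategory $\sA_{<\kappa}$ is closed under finite colimits in $\sA$ by~\cite[Proposition~1.16]{AR}; in particular $\coker_\sA(f)$ exists and already lies in $\sA_{<\kappa}$, hence it is also the cokernel of $f$ computed in $\sA_{<\kappa}$. Since cokernels are unique, $g=\coker_{\sA_{<\kappa}}(f)=\coker_\sA(f)$, with no further work.

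For the kernel assertion the analogous formal argument is unavailable, since $\sA^\sop$ is not accessible; instead I would argue directly, using $\kappa$\+accessibility of $\sA$. The first step is the observation that $f$, being a kernel in $\sA_{<\kappa}$ and therefore a monomorphism there, remains a monomorphism in $\sA$: given $p,q\:W\rarrow S$ in $\sA$ with $fp=fq$, one writes $W$ as a $\kappa$\+directed colimit of objects $W_l\in\sA_{<\kappa}$ and notes that $p$ and $q$ agree after precomposition with each colimit injection $W_l\rarrow W$ (as $f$ is monic in $\sA_{<\kappa}$), hence $p=q$. The second step verifies the universal property of $f$ as $\ker_\sA(g)$: for an arbitrary object $X\in\sA$ and a morphism $h\:X\rarrow T$ with $gh=0$, write $X=\varinjlim_i X_i$ with $X_i\in\sA_{<\kappa}$ and colimit injections $\alpha_i$; for each $i$ the composite $h\alpha_i$ satisfies $g(h\alpha_i)=0$, so by hypothesis it factors uniquely as $f\beta_i$ with $\beta_i\:X_i\rarrow S$; these $\beta_i$ are compatible along the diagram because $f$ is monic, so they assemble into a morphism $\beta\:X\rarrow S$, and one checks that $f\beta=h$ and that $\beta$ is unique (again using that $f$ is monic in $\sA$). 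Together with $gf=0$, which holds verbatim, this gives $f=\ker_\sA(g)$. I expect this to be the step carrying the real content; the only mild pitfall is the preliminary point that $f$ stays monic in $\sA$ — a monomorphism of $\sA_{<\kappa}$ need not remain one in $\sA$ in general, and it does here precisely because $f$ is a kernel, so being monic can be tested against $\kappa$\+presentable sources.

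Finally, for the ``consequently'' clause: the subcategory $\sA_{<\kappa}$ is closed under extensions in the abelian category $\sA$ (a standard fact about locally $\kappa$\+presentable abelian categories), so it carries an inherited exact structure whose admissible short exact sequences are exactly the short exact sequences of $\sA$ with all three terms $\kappa$\+presentable; since the maximal exact structure is the top of the lattice of exact structures on $\sA_{<\kappa}$, the inherited one is contained in it. For the reverse inclusion, let $S\overset f\rarrow T\overset g\rarrow U$ be a conflation of the maximal exact structure on $\sA_{<\kappa}$; by the definition of an exact category, $(f,g)$ is a kernel–cokernel pair in $\sA_{<\kappa}$, i.e.\ $f=\ker_{\sA_{<\kappa}}(g)$ and $g=\coker_{\sA_{<\kappa}}(f)$, so by the two parts already proved $(f,g)$ is a kernel–cokernel pair in $\sA$, and in an abelian category that is precisely a short exact sequence. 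Hence every maximal conflation is an inherited admissible short exact sequence, and the two exact structures on $\sA_{<\kappa}$ coincide.
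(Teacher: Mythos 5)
Your proof is correct, and it matches the paper on the cokernel step (closure of $\sA_{<\kappa}$ under $\kappa$\+small colimits) and on the final deduction of maximality. Where you genuinely diverge is the kernel step, which is where the content lies. The paper factors $f$ through $S'=\ker_\sA(g)$ and proves the comparison morphism $h\:S\rarrow S'$ is an isomorphism: it is an epimorphism because every object of a locally $\kappa$\+presentable category is the sum of the images of morphisms from $\kappa$\+presentable objects and each such morphism into $S'$ factors through~$h$, and it has vanishing kernel by a similar test. You instead verify the universal property of the kernel directly, writing an arbitrary test object as a $\kappa$\+directed colimit of $\kappa$\+presentable ones and gluing the factorizations supplied by the hypothesis; your preliminary observation that $f$ remains monic in $\sA$ (tested against $\kappa$\+presentable sources) is precisely what makes the local factorizations compatible and the assembled morphism unique, so the argument closes. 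Both routes rest on the same principle --- that $\kappa$\+presentable objects detect everything in a $\kappa$\+accessible category --- but yours stays entirely at the level of colimit presentations and avoids invoking the description of an object as the union of images of morphisms from generators, at the cost of a slightly longer compatibility check. Either argument is acceptable.
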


\begin{proof}
 Notice first of all that the full subcategory $\sA_{<\kappa}$ is
closed under extensions in~$\sA$ by~\cite[Lemma~A.4]{Sto0}
(the running assumption in~\cite{Sto0} that the category is
Grothendieck is not needed for this lemma, as it suffices to know
that the $\kappa$\+directed colimit functors are exact in any locally
$\kappa$\+presentable abelian category~\cite[Proposition~1.59]{AR}).
 So $\sA_{<\kappa}$ inherits an exact category structure from
the abelian exact structure of $\sA$, and the last assertion of
the lemma makes sense.

 Part~(a): notice that the full subcategory $\sA_{<\kappa}$ is closed
under all $\kappa$\+small colimits, and in particular, under cokernels
in~$\sA$ \,\cite[Proposition~1.16]{AR}.
 Consequently, one has $\coker_{\sA_{<\kappa}}(f)=\coker_\sA(f)$
for any morphism~$f$ in $\sA_{<\kappa}$.

 Part~(b): denote by $S'=\ker_\sA(g)$ the kernel of the morphism~$g$
computed in the category~$\sA$.
 Then the morphism~$f$ factorizes as $S\overset h\rarrow S'\rarrow T$.
 Now $S'$ is the sum of the images of the morphisms into $S'$ from
$\kappa$\+presentable objects $V\in\sA_{<\kappa}$.
 Any such morphism $V\rarrow S'$ factorizes through~$h$, since
the composition $V\rarrow S'\rarrow T$ factorizes through $S\rarrow T$,
and $S'\rarrow T$ is a monomorphism in~$\sA$.
 Hence the morphism~$h$ is an epimorphism in~$\sA$.

 Finally, the kernel $K$ of the morphism $S\rarrow S'$ in $\sA$
is the sum of the images of the morphisms into $K$ from
$\kappa$\+presentable objects $W\in\sA_{<\kappa}$.
 Any such morphism $W\rarrow K$ has to vanish, since the composition
$W\rarrow K\rarrow S\rarrow S'\rarrow T$ vanishes, while
the morphism $S\rarrow T$ is a monomorphism in $\sA_{<\kappa}$
and the morphism $K\rarrow S$ is a monomorphism in~$\sA$.
 Thus $K=0$ and $h$~is an isomorphism.
\end{proof}

 The following two corollaries describe the admissible epimorphisms
and the admissible monomorphisms in the maximal locally
$\kappa$\+coherent exact structure on~$\sA$.

\begin{cor} \label{maximal-admissible-epimorphisms-cor}
 Let\/ $\sA$ be a locally $\kappa$\+presentable abelian category and
$D\rarrow E$ be a morphism in\/~$\sA$.
 Then the following conditions are equivalent:
\begin{enumerate}
\item $D\rarrow E$ is an admissible epimorphism in the maximal
locally $\kappa$\+coherent exact structure on\/~$\sA$;
\item for any object $S\in\sA_{<\kappa}$, any morphism $S\rarrow E$
in\/ $\sA$ can be included into a commutative square diagram
$$
 \xymatrix{
  D \ar[r] & E \\
  T \ar@{..>}[u] \ar@{..>>}[r] & S \ar[u]
 }
$$
\end{enumerate}
such that $T\in\sA_{<\kappa}$ and $T\rarrow S$ is an epimorphism
in\/ $\sA$ with the kernel\/ $\ker(T\twoheadrightarrow S)$
belonging to\/~$\sA_{<\kappa}$.
\end{cor}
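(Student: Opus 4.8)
The plan is to deduce this corollary from the general characterization of admissible epimorphisms in a locally $\kappa$\+coherent exact category, namely Lemma~\ref{admissible-epis-characterized}, specialized to the maximal locally $\kappa$\+coherent exact structure on $\sA$. The key input is Lemma~\ref{maximal-on-kappa-presentables}, which identifies the inherited exact structure on $\sA_{<\kappa}$ coming from the maximal locally $\kappa$\+coherent exact structure on $\sA$: it is the maximal exact structure on $\sA_{<\kappa}$, and a short exact sequence there is precisely a composable pair $0 \to K \to T \to S \to 0$ with $K = \ker_{\sA_{<\kappa}}(T \to S)$ and $T \to S = \coker_{\sA_{<\kappa}}(K \to T)$, which by that lemma coincides with $0 \to K \to T \to S \to 0$ being exact in $\sA$. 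So an admissible epimorphism $T \to S$ in $\sA_{<\kappa}$ (for the relevant inherited structure) is exactly an epimorphism in $\sA$ between $\kappa$\+presentable objects whose $\sA$\+kernel is again $\kappa$\+presentable.

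First I would recall that the maximal locally $\kappa$\+coherent exact structure on $\sA$ corresponds, under Theorem~\ref{lc-exact-structures-bijective-correspondence}, to the maximal exact structure on $\sA_{<\kappa}$, and invoke Lemma~\ref{maximal-on-kappa-presentables} to see that this maximal exact structure on $\sA_{<\kappa}$ is the one inherited from the abelian category $\sA$; concretely, its admissible epimorphisms are the morphisms $T \twoheadrightarrow S$ in $\sA_{<\kappa}$ which are epimorphisms in $\sA$ with $\ker_\sA(T \twoheadrightarrow S) \in \sA_{<\kappa}$ (using that $\sA_{<\kappa}$ is closed under extensions in $\sA$, and that for such a morphism $\ker_{\sA_{<\kappa}}$ agrees with $\ker_\sA$ when the latter is $\kappa$\+presentable). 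Then I would simply apply Lemma~\ref{admissible-epis-characterized} to the locally $\kappa$\+coherent exact category $\sA$: condition~(1) there, for this particular exact structure, is precisely condition~(1) here, and condition~(2) there unwinds to condition~(2) here once one substitutes the explicit description of admissible epimorphisms $T \to S$ in $\sA_{<\kappa}$ just obtained.

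The one point requiring a little care — and the place I expect is the main (minor) obstacle — is matching the phrase ``admissible epimorphism $T \to S$ in $\sA_{<\kappa}$'' from Lemma~\ref{admissible-epis-characterized}(2) with the phrase ``$T \to S$ is an epimorphism in $\sA$ with $\ker(T \twoheadrightarrow S) \in \sA_{<\kappa}$'' in the statement here. This is exactly where Lemma~\ref{maximal-on-kappa-presentables} is used: it guarantees that a composable pair $S' \to T \to S$ in $\sA_{<\kappa}$ which is a kernel–cokernel pair in $\sA_{<\kappa}$ is already a short exact sequence in $\sA$, so that admissibility of $T \to S$ in the maximal exact structure on $\sA_{<\kappa}$ forces its $\sA$\+kernel to exist, be $\kappa$\+presentable, and make $T \to S$ an $\sA$\+epimorphism; conversely, given an $\sA$\+epimorphism $T \twoheadrightarrow S$ between $\kappa$\+presentable objects with $\kappa$\+presentable kernel, the resulting short exact sequence in $\sA$ has all terms in $\sA_{<\kappa}$, hence (by Theorem~\ref{lc-exact-structures-bijective-correspondence} and the last sentence of Proposition~\ref{colimit-exact-structure-prop}) is a short exact sequence in the maximal exact structure on $\sA_{<\kappa}$, so $T \to S$ is an admissible epimorphism there. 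With this translation in hand, the corollary is immediate from Lemma~\ref{admissible-epis-characterized}.

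\begin{proof}
 By Theorem~\ref{lc-exact-structures-bijective-correspondence}, the maximal
locally $\kappa$\+coherent exact structure on $\sA$ is the one associated with
the maximal exact structure on the small additive category $\sA_{<\kappa}$.
 By Lemma~\ref{maximal-on-kappa-presentables}, this maximal exact structure on
$\sA_{<\kappa}$ is the exact structure inherited from the abelian exact
structure on~$\sA$; moreover, for any morphism $T\rarrow S$ in $\sA_{<\kappa}$,
the following are equivalent: $T\rarrow S$ is an admissible epimorphism in
$\sA_{<\kappa}$; the morphism $T\rarrow S$ is an epimorphism in $\sA$ with
$\ker_\sA(T\twoheadrightarrow S)\in\sA_{<\kappa}$.
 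Indeed, if $T\rarrow S$ is an admissible epimorphism in the maximal exact
structure on $\sA_{<\kappa}$, then it has a kernel $S'=\ker_{\sA_{<\kappa}}
(T\twoheadrightarrow S)$ with $S\overset{}\larrow T\overset{}\larrow S'$ a
kernel--cokernel pair in $\sA_{<\kappa}$, and Lemma~\ref{maximal-on-kappa-presentables}
shows this pair is a short exact sequence in $\sA$; hence $S'=\ker_\sA
(T\twoheadrightarrow S)$ is $\kappa$\+presentable and $T\rarrow S$ is an
epimorphism in~$\sA$.
 Conversely, if $T\rarrow S$ is an epimorphism in $\sA$ with
$S'=\ker_\sA(T\twoheadrightarrow S)\in\sA_{<\kappa}$, then $0\rarrow S'\rarrow
T\rarrow S\rarrow0$ is a short exact sequence in $\sA$ with all terms in
$\sA_{<\kappa}$; by the second assertion of
Proposition~\ref{colimit-exact-structure-prop}, it is a short exact sequence in
the maximal exact structure on $\sA_{<\kappa}$, so $T\rarrow S$ is an admissible
epimorphism in~$\sA_{<\kappa}$.

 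It remains to apply Lemma~\ref{admissible-epis-characterized} to the locally
$\kappa$\+coherent exact category $\sA$ endowed with its maximal locally
$\kappa$\+coherent exact structure.
 Condition~(1) of Lemma~\ref{admissible-epis-characterized} is then condition~(1)
of the present corollary, while condition~(2) of
Lemma~\ref{admissible-epis-characterized}, rewritten using the description of
admissible epimorphisms in $\sA_{<\kappa}$ obtained above, is condition~(2) of
the present corollary.
\end{proof}
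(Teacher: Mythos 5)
Your proof is correct and takes essentially the same route as the paper, which simply deduces the corollary from Lemma~\ref{admissible-epis-in-the-colimit-exact-structure} (the sibling of Lemma~\ref{admissible-epis-characterized} that you use, interchangeable with it via Theorem~\ref{lc-exact-structures-bijective-correspondence}) together with Lemma~\ref{maximal-on-kappa-presentables}. One tiny slip: in the converse direction of your translation step, the fact you need---that a short exact sequence of the abelian category $\sA$ with all terms in $\sA_{<\kappa}$ is admissible in the maximal exact structure on $\sA_{<\kappa}$---is the last assertion of Lemma~\ref{maximal-on-kappa-presentables} (inherited $=$ maximal), not the second assertion of Proposition~\ref{colimit-exact-structure-prop}, whose hypothesis is admissibility in the locally $\kappa$\+coherent structure on $\sA$ rather than exactness in the abelian category $\sA$.
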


\begin{proof}
 Follows from
Lemmas~\ref{admissible-epis-in-the-colimit-exact-structure}
and~\ref{maximal-on-kappa-presentables}.
\end{proof}

\begin{cor} \label{maximal-admissible-monomorphisms-cor}
 Let $\sA$ be a locally $\kappa$\+presentable abelian category and
$C\rarrow D$ be a morphism in\/~$\sA$.
 Then the following conditions are equivalent:
\begin{enumerate}
\item $C\rarrow D$ is an admissible monomorphism in the maximal
locally $\kappa$\+coherent exact structure on\/~$\sA$;
\item any morphism into $C\rarrow D$ from a morphism in\/
$\sA_{<\kappa}$ factorizes through a morphism in\/ $\sA_{<\kappa}$
that is a monomorphism in\/~$\sA$.
\end{enumerate}
\end{cor}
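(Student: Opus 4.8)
The plan is to deduce the corollary by combining Lemma~\ref{admissible-monos-characterized} with Lemma~\ref{maximal-on-kappa-presentables}, in the same spirit as the proof of Corollary~\ref{maximal-admissible-epimorphisms-cor}. Since $\sA$ is $\kappa$\+accessible, the full subcategory $\sA_{<\kappa}$ is essentially small and hence carries a unique maximal exact structure~\cite{SW,Cri,Rum}; under the order-preserving bijection of Theorem~\ref{lc-exact-structures-bijective-correspondence} this maximal exact structure corresponds to the maximal locally $\kappa$\+coherent exact structure on~$\sA$. In other words, the exact structure that $\sA_{<\kappa}$ inherits from the maximal locally $\kappa$\+coherent exact structure on $\sA$ is the maximal exact structure on $\sA_{<\kappa}$, which by Lemma~\ref{maximal-on-kappa-presentables} agrees with the exact structure inherited from the abelian exact structure on~$\sA$.

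The key step is then to check that a morphism $f\:S\rarrow T$ in $\sA_{<\kappa}$ is an admissible monomorphism in the maximal exact structure on $\sA_{<\kappa}$ if and only if it is a monomorphism in~$\sA$. If $f$ is such an admissible monomorphism, it fits into a short exact sequence $0\rarrow S\overset f\rarrow T\overset g\rarrow U\rarrow 0$ in the exact structure inherited from~$\sA$, which by definition means the sequence is exact in the abelian category~$\sA$; in particular $f$ is a monomorphism in~$\sA$. Conversely, if $f$ is a monomorphism in~$\sA$, set $U=\coker_\sA(f)$. Then $U$ lies in $\sA_{<\kappa}$ because $\sA_{<\kappa}$ is closed under cokernels in~$\sA$ \,\cite[Proposition~1.16]{AR}, and moreover $\coker_{\sA_{<\kappa}}(f)=U$; also $S=\ker_\sA(g)$ belongs to $\sA_{<\kappa}$, and it is simultaneously the kernel of $g\:T\rarrow U$ computed in $\sA_{<\kappa}$, since any morphism $V\rarrow T$ with $V\in\sA_{<\kappa}$ that is annihilated by $g$ factorizes through $S$ already in~$\sA$. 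Hence $0\rarrow S\overset f\rarrow T\overset g\rarrow U\rarrow 0$ is a short exact sequence in the exact structure inherited from~$\sA$, that is, in the maximal exact structure on $\sA_{<\kappa}$, so $f$ is an admissible monomorphism there.

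With this equivalence established, condition~(2) of the present corollary coincides with condition~(2) of Lemma~\ref{admissible-monos-characterized} for the maximal locally $\kappa$\+coherent exact category~$\sA$, once the admissible monomorphisms in the inherited (maximal) exact structure on $\sA_{<\kappa}$ are identified with the morphisms in $\sA_{<\kappa}$ that are monomorphisms in~$\sA$; and condition~(1) of the corollary is exactly condition~(1) of the lemma. So the asserted equivalence follows at once. I do not expect a genuine obstacle here; the only mildly subtle point is that, unlike in Corollary~\ref{maximal-admissible-epimorphisms-cor}, no auxiliary finite-presentability condition on the cokernel is required --- precisely because $\sA_{<\kappa}$ is closed under cokernels, but not under kernels, in~$\sA$.
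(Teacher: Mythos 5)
Your proof is correct and takes essentially the same route as the paper's: both reduce the corollary to Lemma~\ref{admissible-monos-characterized} by identifying the admissible monomorphisms of the inherited (maximal) exact structure on $\sA_{<\kappa}$ with the morphisms of $\sA_{<\kappa}$ that are monomorphisms in~$\sA$, via Lemma~\ref{maximal-on-kappa-presentables} and the closure of $\sA_{<\kappa}$ under cokernels in~$\sA$.
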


\begin{proof}
 It is worth noticing that a morphism~$g$ in $\sA_{<\kappa}$ is
a monomorphism in $\sA$ if and only if $g$~is a monomorphism
in $\sA_{<\kappa}$ (by Lemma~\ref{maximal-on-kappa-presentables}(b)).
 Furthermore, any monomorphism is admissible in the maximal exact
structure on $\sA_{<\kappa}$, as the full subcategory $\sA_{<\kappa}$
is closed under cokernels in~$\sA$ (by~\cite[Proposition~1.16]{AR}).
 In view of these remarks, the assertion of the corollary follows
from Lemma~\ref{admissible-monos-characterized}.
\end{proof}

 A locally $\kappa$\+presentable abelian category $\sA$ is said to be
\emph{locally\/ $\kappa$\+coherent} (as an abelian category) if
the kernel of any epimorphism  between two $\kappa$\+presentable
objects is again $\kappa$\+presentable in~$\sA$.
 Equivalently, $\sA$ is locally $\kappa$\+coherent if and only if
the full subcategory $\sA_{<\kappa}$ is closed under kernels (of
arbitrary morphisms) in~$\sA$.
 In the case of the cardinal $\kappa=\aleph_0$, this definition agrees
with the definition of a locally coherent abelian category introduced
in the paper~\cite[Section~2]{Roo} and discussed
in~\cite[Section~9.5]{Pedg}, \cite[Section~8.2]{PS5}.

\begin{cor} \label{locally-coherent-abelian-corollary}
 Let\/ $\sA$ be a locally $\kappa$\+presentable abelian category.
 Then the maximal locally $\kappa$\+coherent exact structure on\/ $\sA$
coincides with the abelian exact structure on\/ $\sA$ if and only if\/
$\sA$ is locally $\kappa$\+coherent as an abelian category.
\end{cor}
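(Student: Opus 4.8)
The plan is to read off the maximal locally $\kappa$\+coherent exact structure on $\sA$ from Theorem~\ref{lc-exact-structures-bijective-correspondence} and Lemma~\ref{maximal-on-kappa-presentables}: under the bijection of the theorem it corresponds to the maximal exact structure on $\sA_{<\kappa}$, which by Lemma~\ref{maximal-on-kappa-presentables} is the one inherited from the abelian exact structure of $\sA$; so its admissible short exact sequences are precisely the $\kappa$\+directed colimits (formed termwise in $\bC^3(\sA)$) of those short exact sequences $0\rarrow S\rarrow T\rarrow U\rarrow0$ of $\kappa$\+presentable objects that are exact in~$\sA$. First I would record the inclusion that holds unconditionally: since $\kappa$\+directed colimits are exact in the locally $\kappa$\+presentable abelian category $\sA$ \cite[Proposition~1.59]{AR}, any such colimit of abelian short exact sequences is again an abelian short exact sequence, so the maximal locally $\kappa$\+coherent exact structure is always contained in the abelian one. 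Hence the content of the corollary is a criterion for the reverse inclusion, i.e.\ for every abelian short exact sequence in $\sA$ to be admissible; and since in an admissible short exact sequence $0\rarrow A\rarrow B\rarrow C\rarrow0$ the object $A$ is the kernel of $B\rarrow C$ and so coincides with the abelian kernel, the reverse inclusion is equivalent to the statement that every epimorphism in $\sA$ is an admissible epimorphism in the maximal locally $\kappa$\+coherent exact structure.

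For the ``only if'' direction, suppose the two exact structures coincide, and let $g\:T\twoheadrightarrow U$ be any epimorphism in $\sA$ with $T$, $U\in\sA_{<\kappa}$. Then the abelian short exact sequence $0\rarrow\ker g\rarrow T\rarrow U\rarrow0$ is admissible, so $g$ is an admissible epimorphism, and Proposition~\ref{finitely-presentables-adm-co-kernel-closed} gives $\ker g\in\sA_{<\kappa}$. As $T$ and $U$ ranged over all $\kappa$\+presentable objects, this is exactly the statement that $\sA$ is locally $\kappa$\+coherent as an abelian category.

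For the ``if'' direction, suppose $\sA$ is locally $\kappa$\+coherent as an abelian category, and let $D\twoheadrightarrow E$ be an epimorphism in $\sA$; I would verify condition~(2) of Corollary~\ref{maximal-admissible-epimorphisms-cor}. Given $S\in\sA_{<\kappa}$ and a morphism $S\rarrow E$, form the pullback $P=D\times_E S$ in $\sA$; then $P\rarrow S$ is an epimorphism. Writing $P=\varinjlim_\alpha P_\alpha$ as a $\kappa$\+directed colimit of $\kappa$\+presentable objects, the images $\im(P_\alpha\rarrow S)\subseteq S$ form a $\kappa$\+directed family of subobjects whose colimit, by exactness of $\kappa$\+directed colimits, is the subobject $\im(P\rarrow S)=S$; since $S$ is $\kappa$\+presentable, the identity of $S$ factors through one member of this family, forcing $\im(P_\alpha\rarrow S)=S$ for some~$\alpha$. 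Put $T=P_\alpha$. Then $T\in\sA_{<\kappa}$; the morphism $T\rarrow S$ is an epimorphism in $\sA$ whose kernel lies in $\sA_{<\kappa}$ by the assumed coherence of $\sA$; and the composition $T=P_\alpha\rarrow P\rarrow D$ together with $T\rarrow S$ completes the commutative square demanded in Corollary~\ref{maximal-admissible-epimorphisms-cor}(2), commutativity being the defining pullback identity precomposed with $P_\alpha\rarrow P$. Thus $D\rarrow E$ is an admissible epimorphism in the maximal locally $\kappa$\+coherent exact structure, and by the first paragraph the two exact structures coincide.

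I expect the ``if'' direction to carry the weight of the argument, and within it the delicate point is the claim that the $\kappa$\+directed union of the subobjects $\im(P_\alpha\rarrow S)$ exhausts $S$ and is attained: this uses the exactness of $\kappa$\+directed colimits in $\sA$ (to compute $\kappa$\+directed colimits of subobjects of $S$ as unions of subobjects) together with the defining property of the $\kappa$\+presentable object~$S$. The ``only if'' direction, by contrast, is an immediate application of Proposition~\ref{finitely-presentables-adm-co-kernel-closed}.
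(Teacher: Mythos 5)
Your proof is correct and follows essentially the same route as the paper: the ``only if'' direction via Proposition~\ref{finitely-presentables-adm-co-kernel-closed}, and the ``if'' direction by verifying condition~(2) of Corollary~\ref{maximal-admissible-epimorphisms-cor}. The only difference is that where the paper cites the proof of \cite[Lemma~7.7]{Pacc} for the existence of the commutative square, you supply the argument yourself (pullback, $\kappa$\+directed union of images, $\kappa$\+presentability of $S$), which is exactly the content of that reference.
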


\begin{proof}
 ``Only if'': by
Proposition~\ref{finitely-presentables-adm-co-kernel-closed},
the full subcategory $\sA_{<\kappa}$ is closed under kernels of
admissible epimorphisms in any locally $\kappa$\+coherent exact
structure on~$\sA$.

 ``If'': when $\sA$ is a locally $\kappa$\+coherent abelian category,
the full subcategory $\sA_{<\kappa}$ is closed under kernels and
cokernels in~$\sA$; so $\sA_{<\kappa}$ is an abelian category.
 We have to show that the locally $\kappa$\+coherent exact structure
on $\sA$ corresponding to the abelian exact structure on
$\sA_{<\kappa}$ is the abelian exact structure on~$\sA$.

 For this purpose, in view of
Corollary~\ref{maximal-admissible-epimorphisms-cor}, it suffices to
check that any epimorphism $D\rarrow E$ in $\sA$ and any morphism
$S\rarrow E$ in $\sA$ with $S\in\sA_{<\kappa}$ can be included into
a commutative square diagram
$$
 \xymatrix{
  D \ar[r] & E \\
  T \ar@{..>}[u] \ar@{..>>}[r] & S \ar[u]
 }
$$
with an object $T\in\sA_{<\kappa}$ and an epimorphism $T\rarrow S$
in~$\sA$.
 The latter assertion holds in any locally $\kappa$\+presentable
abelian category~$\sA$, as explained in~\cite[proof of Lemma~10.7]{Pacc}.
\end{proof}

\begin{ex}
 To give another example of a locally $\kappa$\+coherent exact
category structure, consider a locally $\kappa$\+presentable
abelian category $\sA$, and suppose given a class of
$\kappa$\+presentable objects $\sT\subset\sA_{<\kappa}$ such that
$\sT$ is closed under extensions in $\sA_{<\kappa}$, or
equivalently, in~$\sA$ (cf.\ the proof of
Lemma~\ref{maximal-on-kappa-presentables}).

 Then, by Proposition~\ref{accessible-subcategory}, the full
subcategory $\sB=\varinjlim_{(\kappa)}\sT\subset\sA$ is
$\kappa$\+accessible, and the full subcategory of all
$\kappa$\+presentable objects in $\sB$ coincides with the idempotent
completion of $\sT$, that is $\sB_{<\kappa}=\overline\sT$.
 Following the discussion in the beginnning of
Section~\ref{ind-objects-secn}, we have an equivalence of
additive categories $\sB\simeq\Ind_{(\kappa)}\sT$.

 Endow the additive category $\sT$ with the exact category structure
inherited from the abelian category~$\sA$.
 Then the additive category $\sB\simeq\Ind_{(\kappa)}\sT$ acquires
the induced locally $\kappa$\+coherent exact structure, as per
Corollary~\ref{ind-objects-loc-coh-exact-structure}.
 We will call this exact structure the \emph{standard locally
$\kappa$\+coherent exact structure} on $\sB=\varinjlim_{(\kappa)}\sT$.

 Notice that, even when $\sT=\sA_{<\kappa}$, and consequently,
$\sB=\sA$, the standard locally $\kappa$\+coherent exact structure
on $\sA$ differs from the abelian exact structure (generally speaking).
 In fact, the standard locally $\kappa$\+coherent exact structure
on $\sB=\sA$ is the maximal locally $\kappa$\+coherent exact structure
(by Lemma~\ref{maximal-on-kappa-presentables}).
 For example, when $\kappa=\aleph_0$ and $\sA=\Modr R$ is
the category of right modules over a ring $R$, the maximal locally
coherent exact structure on $\Modr R$ coincides with the abelian
exact structure if and only if the ring $R$ is right coherent
(by Corollary~\ref{locally-coherent-abelian-corollary}).
\end{ex}

\Section{Example: Pure Exact Structure and Flat Coherence}
\label{pure-flat-subsecn}

 Let $\kappa$~be a regular cardinal and $\sA$ be a $\kappa$\+accessible
additive category.
 The minimal element in the complete lattice of all locally
$\kappa$\+coherent exact structures on $\sA$ (as per the discussion
in Section~\ref{example-maximal-subsecn}) is the locally
$\kappa$\+coherent exact structure corresponding to the split exact
structure on~$\sA_{<\kappa}$.
 This exact structure on a $\kappa$\+accessible additive category $\sA$
is called the \emph{$\kappa$\+pure exact structure}, or in the case
of $\kappa=\aleph_0$, simply the \emph{pure exact
structure}~\cite[Section~3]{CB}, \cite[Section~4]{Sto}.

 The admissible short exact sequences in the $\kappa$\+pure exact
structure on $\sA$ are the $\kappa$\+directed colimits of split
short exact sequences in $\sA_{<\kappa}$, or equivalently
(by Lemma~\ref{short-exact-preserved-by-directed-colimits}),
the $\kappa$\+directed colimits of split short exact sequences
in~$\sA$.
 So the $\kappa$\+pure exact structure is locally $\kappa$\+coherent
by definition, and in particular, the pure exact structure is
locally coherent.

 The following lemmas and propositions provide a description of
admissible epimorphisms and admissible monomorphisms in
the $\kappa$\+pure exact structure.
 These descriptions are fairly standard and usually taken as
the definitions of pure epimorphisms and pure monomorphisms
(cf.~\cite[Definition~2.27]{AR}, \cite[Definition~1]{AR2}, \cite{CB},
\cite{Sto}); nevertheless, our results provide a generalization
of~\cite[Proposition~5]{AR2} from abelian to additive categories.

\begin{lem} \label{pure-epimorphisms-lemma}
 Let\/ $\sA$ be a category and $p\:D\rarrow E$, \ $e\:S\rarrow E$ be
a pair of morphisms in\/ $\sA$ with the same codomain.
 Then there exists a commutative square
$$
 \xymatrix{
  D \ar[r]^p & E \\
  T \ar@{..>}[u]^d \ar@{..>>}[r]_s & S \ar[u]_e
 }
$$
with a split epimorphism $s\:T\rarrow S$ in $\sA$ if and only if
the morphism $e\:S\rarrow E$ factorizes through the morphism
$p\:D\rarrow E$,
$$
 \xymatrix{
  D \ar[r]^p & E \\
  & S \ar[u]_e \ar@{..>}[lu]^l
 }
$$
 If this is the case, one can choose $T=S$ and $s=\id_S$.
\end{lem}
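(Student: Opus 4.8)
The plan is to prove both implications by elementary diagram chases; no additive or exactness hypotheses on $\sA$ are needed, so the only subtlety is keeping track of arrow directions. It is convenient to note at the outset that the commutative square in question amounts to the single equation $p \circ d = e \circ s$, viewed as an equality of composites $T \rarrow E$.

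For the ``only if'' direction, suppose we are given the commutative square with $s \colon T \rarrow S$ a split epimorphism, and fix a section $t \colon S \rarrow T$ satisfying $s t = \id_S$. I would then set $l = d t \colon S \rarrow D$ and compute $p l = p d t = e s t = e\,\id_S = e$, where the middle equality is the commutativity $p d = e s$ of the given square. Thus $e$ factorizes through $p$ via~$l$.

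For the ``if'' direction, suppose $e = p l$ for some $l \colon S \rarrow D$. I would take $T = S$ and $s = \id_S$ (an isomorphism, hence a split epimorphism), together with $d = l \colon S \rarrow D$. The resulting square commutes, since $p d = p l = e = e\,\id_S = e s$. Because this construction uses $T = S$ and $s = \id_S$, it simultaneously establishes the final sentence of the lemma.

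I expect no genuine obstacle: once the commutative square is unpacked into the equation $p \circ d = e \circ s$, each direction is a one-line computation, and the choice $T = S$, $s = \id_S$ in the ``if'' direction is forced by simply reading off the factorization.
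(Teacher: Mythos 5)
Your proof is correct and follows essentially the same route as the paper: the ``only if'' direction sets $l=d\circ(\text{section of }s)$ exactly as in the paper's proof, and the ``if'' direction with $T=S$, $s=\id_S$, $d=l$ is what the paper dismisses as ``even more obvious.'' No issues.
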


\begin{proof}
 Given a splitting $i\:S\rarrow T$ (such that $si=\id_S$), it
suffices to put $l=di$.
 The remaining assertions are even more obvious.
\end{proof}

\begin{prop} \label{pure-epimorphisms-characterized-prop}
 Let\/ $\sA$ be a $\kappa$\+accessible additive category and
$p\:D\rarrow E$ be a morphism in\/~$\sA$.
 Then $p$~is an admissible epimorphism in the $\kappa$\+pure exact
structure on\/ $\sA$ if and only if, for every object
$S\in\sA_{<\kappa}$, any morphism $e\:S\rarrow E$ can be lifted
over the morphism~$p$,
$$
 \xymatrix{
  D \ar[r]^p & E \\
  & S \ar[u]_e \ar@{..>}[lu]^l
 }
$$
\end{prop}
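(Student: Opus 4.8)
The plan is to obtain this as the conjunction of two results already established: the intrinsic characterization of admissible epimorphisms in a locally $\kappa$\+coherent exact category (Lemma~\ref{admissible-epis-characterized}) and the elementary factorization criterion of Lemma~\ref{pure-epimorphisms-lemma}.

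First I would pin down the one categorical input specific to the pure structure: by definition the $\kappa$\+pure exact structure on $\sA$ is the locally $\kappa$\+coherent exact structure that corresponds, under the bijection of Theorem~\ref{lc-exact-structures-bijective-correspondence}, to the split exact structure on $\sA_{<\kappa}$. Hence the exact structure inherited by $\sA_{<\kappa}$ from the $\kappa$\+pure structure is the split one, and so a morphism between two $\kappa$\+presentable objects is an admissible epimorphism of $\sA$ if and only if it is a split epimorphism. The ``if'' part is clear, since split epimorphisms are admissible in any exact structure on a weakly idempotent-complete additive category; for the ``only if'' part I would note that an admissible epimorphism between $\kappa$\+presentable objects fits into an admissible short exact sequence whose kernel is again $\kappa$\+presentable by Proposition~\ref{finitely-presentables-adm-co-kernel-closed}, and such a sequence, being admissible in the split exact structure on $\sA_{<\kappa}$, must split.

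Next I would feed this into Lemma~\ref{admissible-epis-characterized}, applied to the $\kappa$\+pure (hence locally $\kappa$\+coherent) exact structure on $\sA$: the morphism $p\:D\rarrow E$ is an admissible epimorphism precisely when every morphism $e\:S\rarrow E$ with $S\in\sA_{<\kappa}$ can be completed to a commutative square over $p$ with $T\in\sA_{<\kappa}$ and $T\rarrow S$ an admissible epimorphism of $\sA$, which by the preceding paragraph means a split epimorphism. Finally, Lemma~\ref{pure-epimorphisms-lemma} says that, for a fixed $e\:S\rarrow E$, such a square with a split epimorphism $T\rarrow S$ exists for \emph{some} object $T$ (a priori anywhere in $\sA$) if and only if $e$ lifts over $p$, and that whenever it exists one may take $T=S$ with $T\rarrow S$ the identity. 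Chaining these equivalences gives exactly the assertion of the proposition.

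I do not expect a genuine obstacle. The one place where a little attention is required is the discrepancy between the $\kappa$\+presentability requirement on $T$ in Lemma~\ref{admissible-epis-characterized} and the absence of such a requirement in Lemma~\ref{pure-epimorphisms-lemma}; this is dissolved by the ``one can choose $T=S$'' clause of the latter, which shows that the existence of the completing square does not depend on whether $T$ is allowed to range over all of $\sA$ or only over $\sA_{<\kappa}$.
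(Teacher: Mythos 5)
Your proof is correct and follows essentially the same route as the paper: the paper's proof simply compares Lemma~\ref{admissible-epis-in-the-colimit-exact-structure} (the variant of Lemma~\ref{admissible-epis-characterized} whose condition~(2) refers directly to admissible epimorphisms in the given exact structure on $\sA_{<\kappa}$, here the split one) with Lemma~\ref{pure-epimorphisms-lemma}. By invoking Lemma~\ref{admissible-epis-characterized} instead, you need the extra (correct, and correctly justified) observation that the admissible epimorphisms of the pure structure between $\kappa$\+presentable objects are exactly the split epimorphisms --- a step that the paper's choice of lemma renders unnecessary.
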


\begin{proof}
 Compare Lemma~\ref{admissible-epis-in-the-colimit-exact-structure}
with Lemma~\ref{pure-epimorphisms-lemma}.
\end{proof}

\begin{lem} \label{pure-monomorphisms-lemma}
 Let\/ $\sA$ be a $\kappa$\+accessible additive category.
 Suppose given a commutative square diagram
$$
 \xymatrix{
  C \ar[r]^i & D \\
  S \ar[u]^c \ar[r]_t & T \ar[u]_d
 }
$$
in\/ $\sA$ with objects $S$, $T\in\sA_{<\kappa}$.
 Then the following two conditions are equivalent:
\begin{enumerate}
\item the morphism of morphisms $(c,d)\:(S\overset t\to T)\rarrow
(C\overset i\to D)$ factorizes through a split monomorphism
$(U\overset j\to V)$ in\/ $\sA$ with $U$, $V\in\sA_{<\kappa}$;
\item the morphism $c\:S\rarrow C$ factorizes through the morphism
$t\:S\rarrow T$,
$$
 \xymatrix{
  C \\
  S \ar[u]^c \ar[r]_t & T \ar@{..>}[lu]_e
 }
$$
\end{enumerate}
\end{lem}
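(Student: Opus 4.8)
The plan is to prove the two implications separately. The implication (1)~$\Longrightarrow$~(2) is a short diagram chase; the implication (2)~$\Longrightarrow$~(1) will use the $\kappa$\+accessibility of~$\sA$ in an essential way, in contrast to the much easier ``admissible epimorphism'' counterpart Lemma~\ref{pure-epimorphisms-lemma}.

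For (1)~$\Longrightarrow$~(2): suppose the morphism of morphisms $(c,d)$ factorizes as $(S\overset t\to T)\rarrow(U\overset j\to V)\rarrow(C\overset i\to D)$, where $j\:U\rarrow V$ is a split monomorphism in $\sA$ with a retraction $r\:V\rarrow U$ (so $rj=\id_U$); write the first morphism as the pair $(a\:S\rarrow U,\ b\:T\rarrow V)$ and the second as the pair $(p\:U\rarrow C,\ q\:V\rarrow D)$, so that $c=pa$, \ $d=qb$, \ $ja=bt$, and $ip=qj$. I~would then put $e=prb\:T\rarrow C$ and observe that $et=prbt=pr(ja)=p(rj)a=pa=c$, which is exactly condition~(2). (Only the retraction of $j$ and the commutativity $ja=bt$ are used here; the second square $ip=qj$ does not enter.)

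For (2)~$\Longrightarrow$~(1): suppose $c=et$ for some morphism $e\:T\rarrow C$, and consider $f=d-ie\:T\rarrow D$. Since the square in the lemma commutes, one has $ic=dt$, hence $ft=dt-iet=ic-ic=0$. The crucial step is to factorize $f$ through a $\kappa$\+presentable object by a morphism that still annihilates~$t$: writing $D=\varinjlim_{\xi\in\Xi}D_\xi$ as a $\kappa$\+directed colimit of objects $D_\xi\in\sA_{<\kappa}$ (possible because $\sA$ is $\kappa$\+accessible), the morphism $f$ factorizes through some $D_\xi$ since $T$ is $\kappa$\+presentable, and then, since $S$ is $\kappa$\+presentable and $ft=0$, after passing to a larger index we obtain an object $W\in\sA_{<\kappa}$ together with morphisms $h\:T\rarrow W$ and $w\:W\rarrow D$ such that $wh=f$ and $ht=0$. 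I~would then take for the split monomorphism $(U\overset j\to V)$ the inclusion $j\:T\rarrow T\oplus W$ of the first direct summand (both $T$ and $T\oplus W$ lie in $\sA_{<\kappa}$), and factorize $(c,d)$ as follows: the morphism $(S\overset t\to T)\rarrow(T\overset j\to T\oplus W)$ has components $t\:S\rarrow T$ and the morphism $T\rarrow T\oplus W$ whose two components are $\id_T$ and~$h$, while the morphism $(T\overset j\to T\oplus W)\rarrow(C\overset i\to D)$ has components $e\:T\rarrow C$ and the morphism $T\oplus W\rarrow D$ whose two components are $ie$ and~$w$. One checks readily that both squares commute (using $ht=0$ for the first one) and that the composite of these two morphisms is $(c,d)$: its $S\rarrow C$ component is $et=c$, and its $T\rarrow D$ component is $ie+wh=ie+f=d$.

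The main obstacle is the (2)~$\Longrightarrow$~(1) direction, and specifically the realization that $f=d-ie$ cannot in general be factored through $T$ itself (or through any fixed $\kappa$\+presentable object not depending on~$D$) by a morphism killing~$t$; this is precisely the place where $\kappa$\+accessibility of $\sA$ is indispensable, and it is what forces the passage from $T$ to the larger $\kappa$\+presentable object $T\oplus W$. Once the correct split monomorphism $T\rarrow T\oplus W$ has been pinned down, the commutativity checks and the verification that the composite recovers $(c,d)$ are routine component bookkeeping.
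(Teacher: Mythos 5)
Your proof is correct and follows essentially the same route as the paper's: the retraction computation for (1)$\Rightarrow$(2), and for (2)$\Rightarrow$(1) the consideration of $f=d-ie$ with $ft=0$, the factorization of $f$ through a $\kappa$\+presentable object $W$ by a morphism $h$ with $ht=0$ (obtained by passing to a larger index in a $\kappa$\+directed presentation of $D$), and the split monomorphism $T\rarrow T\oplus W$ with exactly the same component maps as in the paper. No gaps.
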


\begin{proof}
 (1)\,$\Longrightarrow$\,(2) Suppose given a factorization
$$
 \xymatrix{
  C \ar[r]^i & D \\
  U \ar[u]^a \ar[r]^j & V \ar[u]_b \\
  S \ar[u]^u \ar[r]_t & T \ar[u]_v
 }
$$
with $au=c$ and $bv=d$ as in~(1), and let $q\:V\rarrow U$ be
a splitting (such that $qj=\id_U$).
 Then one has $aqvt=aqju=au=c$, so it remains to put $e=aqv\:
T\rarrow C$ in order to satisfy~(2).

 (2)\,$\Longrightarrow$\,(1) Consider the morphism $f=d-ie\:T\rarrow D$.
 One has $ft=dt-iet=ic-ic=0$, so the composition
$S\overset t\rarrow T\overset f\rarrow D$ vanishes.

 Let $D=\varinjlim_{\xi\in\Xi}V_\xi$ be a representation of $D$ as
a $\kappa$\+directed colimit of $\kappa$\+presentable objects~$V_\xi$.
 Let $b_\xi\:V_\xi\rarrow D$ be the morphisms in the colimit cocone.
 Since the object $T$ is $\kappa$\+presentable, there exists an index
$\xi_0\in\Xi$ such that the morphism $f\:T\rarrow D$ factorizes as
$T\rarrow V_{\xi_0}\overset{b_{\xi_0}}\rarrow D$.
 Then the composition $S\overset t\rarrow T\rarrow V_{\xi_0}
\overset{b_{\xi_0}}\rarrow D$ vanishes.
 Since the object $S$ is $\kappa$\+presentable, there exists an index
$\xi_1\in\Xi$, \,$\xi_1>\xi_0$, such that the composition
$S\overset t\rarrow T\rarrow V_{\xi_0}\rarrow V_{\xi_1}$ vanishes.
 Denote the composition $T\rarrow V_{\xi_0}\rarrow V_{\xi_1}$ by
$v'\:T\rarrow V_{\xi_1}$; so we have $v't=0$ and $b_{\xi_1}v'=f$.
 Now the commutative diagram
$$
 \xymatrix{
  C \ar[rr]^i && D \\
  T \ar[u]^e \ar[rr]^-{(\id_T,0)}
  && T\oplus V_{\xi_1} \ar[u]_{(ie,b_{\xi_1})} \\
  S \ar[u]^t \ar[rr]_t && T \ar[u]_{(\id_T,v')}
 }
$$
provides the desired factorization with $T\oplus V_{\xi_1}\in
\sA_{<\kappa}$ and a split monomorphism $j=(\id_T,0)$.
\end{proof}

\begin{prop} \label{pure-monomorphisms-characterized-prop}
 Let\/ $\sA$ be a $\kappa$\+accessible additive category and
$i\:C\rarrow D$ be a morphism in\/~$\sA$.
 Then $i$~is an admissible monomorphism in the $\kappa$\+pure exact
structure on\/ $\sA$ if and only if, for every commutative square
diagram
$$
 \xymatrix{
  C \ar[r]^i & D \\
  S \ar[u]^c \ar[r]_t & T \ar[u]_d
 }
$$
in\/ $\sA$ with objects $S$, $T\in\sA_{<\kappa}$, the morphism
$c\:S\rarrow C$ factorizes through the morphism $t\:S\rarrow T$,
$$
 \xymatrix{
  C \\
  S \ar[u]^c \ar[r]_t & T \ar@{..>}[lu]_e
 }
$$
\end{prop}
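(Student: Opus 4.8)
The plan is to obtain the stated characterization by combining the description of admissible monomorphisms in a locally $\kappa$\+coherent exact category from Lemma~\ref{admissible-monos-characterized} with the translation furnished by Lemma~\ref{pure-monomorphisms-lemma}, in direct analogy with the way Proposition~\ref{pure-epimorphisms-characterized-prop} was deduced from Lemmas~\ref{admissible-epis-in-the-colimit-exact-structure} and~\ref{pure-epimorphisms-lemma}. First I would recall that the $\kappa$\+pure exact structure on $\sA$ is, by definition, the locally $\kappa$\+coherent exact structure corresponding under Theorem~\ref{lc-exact-structures-bijective-correspondence} to the split exact structure on $\sA_{<\kappa}$; hence Lemma~\ref{admissible-monos-characterized} applies to it, with the inherited exact structure on $\sA_{<\kappa}$ being the split one.

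Next I would pin down the admissible monomorphisms of the split exact structure on $\sA_{<\kappa}$ and check that they are precisely the split monomorphisms $(U\overset j\to V)$ with $U$, $V\in\sA_{<\kappa}$ appearing in condition~(1) of Lemma~\ref{pure-monomorphisms-lemma}. An admissible monomorphism of a split exact structure is a direct summand inclusion, and so admits a retraction; conversely, any morphism in $\sA_{<\kappa}$ possessing a retraction produces an idempotent endomorphism of its codomain, which splits because $\sA_{<\kappa}$ is idempotent-complete (as noted in the text preceding Lemma~\ref{short-exact-preserved-by-directed-colimits}), and this exhibits the morphism as a direct summand inclusion, i.e.\ as an admissible monomorphism of the split exact structure. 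Thus the two classes of morphisms coincide.

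With this identification in hand, Lemma~\ref{admissible-monos-characterized} tells us that $i\:C\rarrow D$ is an admissible monomorphism in the $\kappa$\+pure exact structure on $\sA$ if and only if every morphism of morphisms $(c,d)\:(S\overset t\to T)\rarrow(C\overset i\to D)$ with $S$, $T\in\sA_{<\kappa}$ factorizes through a split monomorphism $(U\overset j\to V)$ with $U$, $V\in\sA_{<\kappa}$, while Lemma~\ref{pure-monomorphisms-lemma}, applied to the given commutative square, tells us that such a factorization exists if and only if $c\:S\rarrow C$ factorizes through $t\:S\rarrow T$. Chaining the two equivalences gives exactly the assertion of the proposition. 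I do not anticipate any genuine obstacle; the argument is essentially formal, and the only point that calls for care is the identification of the two classes of morphisms carried out in the second paragraph, which rests on the idempotent-completeness of $\sA_{<\kappa}$.
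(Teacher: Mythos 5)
Your proposal is correct and follows exactly the route of the paper, whose entire proof reads ``Compare Lemma~\ref{admissible-monos-characterized} with Lemma~\ref{pure-monomorphisms-lemma}.'' The identification of the admissible monomorphisms of the split exact structure on $\sA_{<\kappa}$ with the split monomorphisms between $\kappa$\+presentable objects, via idempotent-completeness, is the right (and only) point needing verification, and you handle it correctly.
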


\begin{proof}
 Compare Lemma~\ref{admissible-monos-characterized}
with Lemma~\ref{pure-monomorphisms-lemma}.
\end{proof}

\begin{ex} \label{flat-modules-example}
 Let $R$ be a ring and $\sA=\Modrfl R$ be the category of flat
right $R$\+modules.
 Then $\sA$ is a finitely accessible additive category, and its
finitely presentable objects are precisely all the finitely
generated projective $R$\+modules (since the flat $R$\+modules
are the directed colimits of finitely generated projective ones,
by the classical Govorov--Lazard theorem~\cite{Gov,Laz}).

 Furthermore, the exact category structure on $\sA$ inherited from
the abelian exact structure of the ambient abelian category of
arbitrary $R$\+modules $\Modr R\supset\Modrfl R$ coincides with
the pure exact structure on~$\sA$.
 Indeed, the functors of directed colimit are exact in $\Modr R$,
so the directed colimits of split short exact sequences of
finitely generated projective modules are short exact sequences
of flat modules.
 Conversely, any surjective morphism of flat $R$\+modules is
an epimorphism in the pure exact structure on $\sA=\Modrfl R$, as one
can see from Proposition~\ref{pure-epimorphisms-characterized-prop}.

 The assertion that the pure exact structure on $\Modrfl R$ is
inherited from the abelian exact structure on $\Modr R$ is also
a special case of Theorem~\ref{canonical-embedding-theorem} below.
 See the first paragraph of Example~\ref{second-periodicity-example}.
\end{ex}

\begin{prop} \label{flat-modules-loc-kappa-coherent}
 For any ring $R$ and regular cardinal~$\kappa$, the exact category
of flat $R$\+modules\/ $\sA=\Modrfl R$ (with the exact structure
inherited from the abelian exact structure on $\Modr R$) is locally
$\kappa$\+coherent.
 The $\kappa$\+presentable objects of\/ $\Modrfl R$ are the flat
$R$\+modules that are $\kappa$\+presentable in\/ $\Modr R$.
\end{prop}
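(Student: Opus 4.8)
The plan is to derive both statements from facts already in hand, the main tool being the ``sharp inequality'' $\aleph_0\triangleleft\kappa$, which holds for every regular cardinal~$\kappa$ \cite[Example~2.13(1)]{AR}.

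First I recall, from Example~\ref{flat-modules-example}, that $\sA=\Modrfl R$ is a finitely accessible additive category whose finitely presentable objects are precisely the finitely generated projective $R$\+modules, and that the exact structure on $\sA$ inherited from $\Modr R$ is the pure exact structure on~$\sA$; by the discussion opening Section~\ref{pure-flat-subsecn}, the pure exact structure is locally ($\aleph_0$\+)coherent. Since $\aleph_0\triangleleft\kappa$, the category $\sA$ is $\kappa$\+accessible \cite[Theorem~2.11]{AR}, so Lemma~\ref{sharply-larger-lemma} applies and shows that the inherited exact structure on $\sA$ is locally $\kappa$\+coherent. That disposes of the first assertion.

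For the description of the $\kappa$\+presentable objects I would argue by mutual inclusion. The easy inclusion: if $M\in\Modrfl R$ is $\kappa$\+presentable in $\Modr R$, then, because $\sA$ is closed under directed (in particular $\kappa$\+directed) colimits in $\Modr R$, every $\kappa$\+directed colimit in $\sA$ is computed in $\Modr R$, and $\Hom_\sA(M,-)$, being the restriction of $\Hom_{\Modr R}(M,-)$, commutes with such colimits; hence $M$ is $\kappa$\+presentable in~$\sA$. For the reverse inclusion I would invoke the standard description of the $\kappa$\+presentable objects of a $\lambda$\+accessible category with $\lambda\triangleleft\kappa$ (as in \cite[proof of Theorem~2.11]{AR}): every $\kappa$\+presentable object $N$ of $\sA$ is a retract of the colimit, formed in $\sA$, of some $\kappa$\+small directed diagram of finitely presentable objects of $\sA$, i.e.\ of finitely generated projective $R$\+modules. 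This colimit is again computed in $\Modr R$; as finitely generated projective modules are $\aleph_0$\+presentable, hence $\kappa$\+presentable, in $\Modr R$, and a colimit of fewer than $\kappa$ objects that are $\kappa$\+presentable in $\Modr R$, as well as a retract of such, is once more $\kappa$\+presentable \cite[Proposition~1.16]{AR}, the object $N$ is $\kappa$\+presentable in~$\Modr R$ (and flat, being an object of~$\sA$). Combining the two inclusions gives the second assertion.

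The only step that is not purely formal is the passage from finite accessibility to $\kappa$\+accessibility together with the accompanying description of the $\kappa$\+presentable objects; this rests on $\aleph_0\triangleleft\kappa$ and the construction in \cite[proof of Theorem~2.11]{AR}, precisely the ingredient already used in the proof of Lemma~\ref{sharply-larger-lemma}. One should also be slightly careful to note that the relevant colimits agree whether computed in $\sA$ or in $\Modr R$. (The second assertion is, alternatively, contained in \cite[Section~7]{Pacc}.)
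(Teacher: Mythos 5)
Your proof is correct and takes essentially the same route as the paper: the first assertion via Example~\ref{flat-modules-example}, the relation $\aleph_0\triangleleft\kappa$, and Lemma~\ref{sharply-larger-lemma}, and the second assertion by unwinding the construction in \cite[proof of Theorem~2.11(iv)$\,\Rightarrow\,$(i)]{AR} (which the paper leaves as a one-line citation). The only nitpick is that \cite[Example~2.13(1)]{AR} gives $\aleph_0\triangleleft\kappa$ for \emph{uncountable} regular~$\kappa$, as the paper states; but for $\kappa=\aleph_0$ the proposition is exactly the content of Example~\ref{flat-modules-example}, so nothing is lost.
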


\begin{proof}
 For any uncountable regular cardinal~$\kappa$ one has $\aleph_0
\triangleleft\kappa$ \,\cite[Example~2.13(1)]{AR}.
 So the first assertion of the proposition follows from
Example~\ref{flat-modules-example} and
Lemma~\ref{sharply-larger-lemma}.
 The second assertion can be easily obtained
from~\cite[proof of Theorem~2.11(iv)\,$\Rightarrow$\,(i)]{AR};
see~\cite[Lemma~1.2]{Pflcc}.
\end{proof}

 The following corollary, which can be also found
in~\cite[Lemma~4.1]{Pflcc} for $\kappa=\aleph_1$ and
in~\cite[Corollary~10.12]{Pacc} in the general case, describes
the ``flat coherence'' phenomenon.

\begin{cor}
 Let $R$ be a ring and $\kappa$~be a regular cardinal.
 Then the kernel of any surjective morphism of $\kappa$\+presentable
flat $R$\+modules is a $\kappa$\+presentable flat $R$\+module.
\end{cor}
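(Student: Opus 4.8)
The plan is to deduce the corollary directly from Proposition~\ref{flat-modules-loc-kappa-coherent} together with Proposition~\ref{finitely-presentables-adm-co-kernel-closed}. By Proposition~\ref{flat-modules-loc-kappa-coherent}, the category $\sA=\Modrfl R$, endowed with the exact structure inherited from the abelian exact structure on $\Modr R$, is a locally $\kappa$\+coherent exact category, and its full subcategory of $\kappa$\+presentable objects $\sA_{<\kappa}$ consists precisely of the flat $R$\+modules that are $\kappa$\+presentable in $\Modr R$. The point to observe is that the surjective morphisms of flat modules are exactly the admissible epimorphisms in this exact structure: a surjection of flat modules has a flat kernel whenever the two outer terms are flat (this is standard, or one can invoke Example~\ref{flat-modules-example}, which identifies the inherited exact structure with the pure one and in any case describes its admissible epimorphisms).

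Given a surjective morphism $F\rarrow G$ of $\kappa$\+presentable flat $R$\+modules, I would first note that $F$ and $G$ belong to $\sA_{<\kappa}$ by the second assertion of Proposition~\ref{flat-modules-loc-kappa-coherent}. The morphism $F\rarrow G$ is an admissible epimorphism in the locally $\kappa$\+coherent exact structure on $\sA$, and its kernel $K$ — computed in $\sA$, which coincides with the kernel computed in $\Modr R$ since $K$ is automatically flat — is the kernel of an admissible epimorphism between objects of $\sA_{<\kappa}$. Proposition~\ref{finitely-presentables-adm-co-kernel-closed} then gives $K\in\sA_{<\kappa}$, that is, $K$ is a flat $R$\+module that is $\kappa$\+presentable in $\Modr R$. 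This is exactly the assertion of the corollary.

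There is essentially no obstacle here: the corollary is a formal consequence of the two cited propositions, and the only thing worth spelling out is the identification of surjections of flat modules with admissible epimorphisms of $\Modrfl R$, which has already been explained in Example~\ref{flat-modules-example}. One small point to mention explicitly is that $\kappa$\+presentability in the exact category $\sA$ and $\kappa$\+presentability in $\Modr R$ agree for flat modules (again the second assertion of Proposition~\ref{flat-modules-loc-kappa-coherent}), so that the conclusion $K\in\sA_{<\kappa}$ really does say that $K$ is $\kappa$\+presentable as an $R$\+module and not merely as an object of $\Modrfl R$.
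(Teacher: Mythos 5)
Your proposal is correct and follows exactly the paper's own argument, which simply says to compare Proposition~\ref{flat-modules-loc-kappa-coherent} with Proposition~\ref{finitely-presentables-adm-co-kernel-closed}; you have merely spelled out the implicit details (that surjections of flat modules are admissible epimorphisms in the inherited exact structure, and that $\kappa$\+presentability in $\Modrfl R$ and in $\Modr R$ agree for flat modules), both of which are indeed justified by Example~\ref{flat-modules-example} and the second assertion of Proposition~\ref{flat-modules-loc-kappa-coherent} as you cite them.
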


\begin{proof}
 Compare Proposition~\ref{flat-modules-loc-kappa-coherent}
with Proposition~\ref{finitely-presentables-adm-co-kernel-closed}.
\end{proof}

\Section{Locally Coherent Exact Categories are of Grothendieck Type}
\label{grothendieck-type-secn}

 Starting from this section, we gradually specialize our discussion
in this paper from the case of an arbitrary regular cardinal~$\kappa$
to that of $\kappa=\aleph_0$.
 In this context, we will call the locally $\aleph_0$\+coherent
exact categories in the sense of the definition in
Section~\ref{basic-properties-secn} simply \emph{locally coherent
exact categories}.

 The aim of this section is to show that all locally coherent exact
categories are \emph{of Grothendieck type} in the sense of
\v St\!'ov\'\i\v cek~\cite[Definitions~3.4 and~3.11]{Sto-ICRA}.
 Consequently, the small object argument can be used to prove
completeness of cotorsion pairs in such exact
categories~\cite[Theorem~5.16]{Sto-ICRA};
and in particular, any such exact category has enough injective
objects~\cite[Corollary~5.9]{Sto-ICRA}.

 Let $\sA$ be a category and $\alpha$~be an ordinal.
 An \emph{$\alpha$\+indexed chain} (of objects and morphisms)
$(A_\beta)_{0\le\beta<\alpha}$ in $\sA$ is a diagram in $\sA$ indexed
by the ordered set~$\alpha$.
 So $A_\beta$ is an object of $\sA$ for every $0\le\beta<\alpha$,
and a commutative diagram of morphisms $A_\beta\rarrow A_\gamma$
in $\sA$ is given, indexed by all pairs of ordinals $0\le\beta<
\gamma<\alpha$.
 An $\alpha$\+indexed chain $(A_\beta)_{0\le\beta<\alpha}$ in $\sA$ is
said to be \emph{smooth} if $A_\gamma=\varinjlim_{\beta<\gamma}A_\beta$
in $\sA$ for every limit ordinal $\gamma<\alpha$.
 If the colimit $A_\alpha=\varinjlim_{\beta<\alpha}A_\beta$ of a smooth
chain $(A_\beta)_{0\le\beta<\alpha}$ of objects and morphisms in $\sA$
exists, then the morphism $A_0\rarrow A_\alpha$ is said to be
the \emph{transfinite composition} of the morphisms $A_\beta\rarrow
A_{\beta+1}$, \ $0\le\beta<\beta+1<\alpha$.

 Let $\sM$ be a class of morphisms in~$\sA$.
 We will say that \emph{transfinite compositions of morphisms from\/
$\sM$ exist} in $\sA$ if, for every smooth chain
$(A_\beta)_{0\le\beta<\alpha}$ of objects and morphisms in $\sA$
such that the morphism $A_\beta\rarrow A_{\beta+1}$ belongs to $\sM$
for all $0\le\beta<\beta+1<\alpha$, the colimit $A_\alpha=
\varinjlim_{\beta<\alpha}A_\beta$ exists in~$\sA$.
 If this is the case, and moreover, the transfinite composition
$A_0\rarrow A_\alpha$ then also belongs to $\sM$, we will say that
transfinite compositions of morphisms from $\sM$ exists in $\sA$
and \emph{are themselves morphisms from\/~$\sM$}.

 Let $\sA$ be a category and $\sM$ be a class of morphisms in~$\sA$.
 Then an object $B\in\sA$ is said to be \emph{small relative to\/~$\sM$}
if there exists a cardinal~$\lambda$ satisfying the following
condition.
 For every smooth chain of objects and morphisms
$(A_\beta)_{0\le\beta<\alpha}$ in $\sA$ such that the morphisms
$A_\beta\rarrow A_{\beta+1}$ belong to $\sM$ for all
$0\le\beta<\beta+1<\alpha$ and the cofinality of the ordinal~$\alpha$
is not smaller than~$\lambda$, the natural map of
sets $\varinjlim_{\beta<\alpha}\Hom_\sA(B,A_\beta)\rarrow
\Hom_\sA(B,A_\alpha)$ must be bijective.
 It is presumed here that the colimit $A_\alpha=
\varinjlim_{\beta<\alpha}A_\beta$ exists in~$\sA$.

 Let $\sA$ be an exact category.
 An \emph{$\alpha$\+indexed filtration} $(F_\beta)_{0\le\beta\le\alpha}$
in $\sA$ is a smooth $(\alpha\nobreak+1)$\+indexed chain of objects
and morphisms such that $F_0=0$ and the morphism $F_\beta\rarrow
F_{\beta+1}$ is an admissible monomorphism for all $0\le\beta<\alpha$.
 Given an $\alpha$\+indexed filtration $(F_\beta)_{0\le\beta\le\alpha}$,
the object $F=F_\alpha$ is said to be \emph{filtered by} the cokernels
$U_\beta=\coker(F_\beta\to F_{\beta+1})$ of the morphisms
$F_\beta\rarrow F_{\beta+1}$, \ $0\le\beta<\alpha$.

 Given a class of objects $\sU\subset\sA$, one says that an object
$F\in\sA$ is \emph{filtered by} (objects from) $\sU$ if there is
an ordinal~$\alpha$ and an $\alpha$\+indexed filtration
$(F_\beta)_{0\le\beta\le\alpha}$ on $F=F_\alpha$ such that
$\coker(F_\beta\to F_{\beta+1})\in\sU$ for all $0\le\beta<\alpha$.
 The class of all objects in $\sA$ filtered by objects from $\sU$
is denoted by $\Fil(\sU)\subset\sA$.

 A class of objects $\sF\subset\sA$ is said to be \emph{deconstructible}
if there exists a \emph{set} of objects $\sU\subset\sF$ such that
$\sF=\Fil(\sU)$.
 In particular, the exact category $\sA$ is said to be
\emph{deconstructible in itself} if there exists a set of objects
$\sU\subset\sA$ such that $\sA=\Fil(\sU)$.

 Given an exact category $\sA$, we say that $\sA$ \emph{admits
a generator} if there is an object $G\in\sA$ such that for
every object $B\in\sA$ there exists a set $\Lambda$ and an admissible
epimorphism $G^{(\Lambda)}\rarrow B$.
 Here $G^{(\Lambda)}$ denotes the coproduct of $\Lambda$ copies of
$G$ in~$\sA$.

 An exact category $\sA$ is called
\emph{efficient}~\cite[Definition~3.4]{Sto-ICRA} if it satisfies
the following conditions:
\begin{description}
\item[(Ef0) or~(GT0)] The additive category $\sA$ is weakly
idempotent-complete.
\item[(Ef1) or~(GT1)] All transfinite compositions of admissible
monomorphisms exist in $\sA$, and are themselves admissible 
monomorphisms.
\item[(Ef2) or~(GT2)] Every object of $\sA$ is small relative to
the class of all admissible monomorphisms.
\item[(Ef3) or~(GT3)] The exact category $\sA$ admits a generator.
\end{description}

 An exact category $\sA$ is said to be \emph{of Grothendieck
type}~\cite[Definition~3.11]{Sto-ICRA} if $\sA$ is efficient
(i.~e., it satisfies conditions (GT0)--(GT3)), and the following
additional condition holds:
\begin{description}
\item[(GT4)] The exact category $\sA$ is deconstructible in itself.
\end{description}

 Given a finitely accessible category $\sA$, we will use the notation
$\sA_\fp=\sA_{<\aleph_0}$ for the full subcategory of finitely
presentable objects in~$\sA$.
 The following well-known lemma is a generalization
of~\cite[Lemma~2.2]{BHP} and~\cite[Proposition~5.2]{Pgen}.

\begin{lem} \label{pure-exact-structure-is-flat-modules}
 Let\/ $\sA$ be a finitely accessible additive category and $\cS$ be
a small category equivalent to\/ $\sA_\fp$, viewed as a ring with
many objects.
 Then the additive category\/ $\sA$ is naturally equivalent to
the additive category\/ $\Modrfl\cS$ of flat right $\cS$\+modules.
 The pure exact structure on\/ $\sA$ corresponds to the exact structure
on\/ $\Modrfl\cS$ inherited from the abelian category\/ $\Modr\cS$ of
all right $\cS$\+modules.
\end{lem}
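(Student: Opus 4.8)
The plan is to first identify $\sA$ with $\Modrfl\cS$ as an additive category, and then to compare the two exact structures by reproducing the argument of Example~\ref{flat-modules-example} with the ground ring~$R$ replaced by the ring with many objects~$\cS$.

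For the identification, I would use the restricted Yoneda functor $Y\:\sA\rarrow\Modr\cS$ sending an object $A$ to the right $\cS$\+module $\Hom_\sA(-,A)|_{\sA_\fp}$. Writing an arbitrary object of $\sA$ as a directed colimit $A=\varinjlim_i S_i$ of objects $S_i\in\sA_\fp$ (possible since $\sA\simeq\Ind\cS$), and using that $\Hom_\sA(S,-)$ commutes with directed colimits for $S\in\sA_\fp$, one sees that $Y$ preserves directed colimits; and computing $\Hom_{\Modr\cS}(YA,YB)=\varprojlim_i\Hom_{\Modr\cS}(YS_i,YB)=\varprojlim_i\Hom_\sA(S_i,B)=\Hom_\sA(A,B)$ via the Yoneda lemma (applied to each $S_i$, a direct summand of a representable) shows that $Y$ is fully faithful. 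Its essential image is therefore the class of directed colimits of representable right $\cS$\+modules, which by the Govorov--Lazard theorem for rings with many objects (see~\cite{CB}; cf.~\cite{Gov,Laz}) coincides with the class of flat right $\cS$\+modules. Hence $Y$ is an equivalence $\sA\simeq\Modrfl\cS$ carrying $\sA_\fp$ onto $(\Modrfl\cS)_\fp$, the full subcategory of direct summands of representable (i.e., finitely generated projective) right $\cS$\+modules. Since the pure exact structure is defined intrinsically in terms of $\sA_\fp$ and directed colimits in~$\bC^3(\sA)$, it is transported by $Y$ to the pure exact structure on the finitely accessible category~$\Modrfl\cS$.

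It remains to show that the pure exact structure on $\Modrfl\cS$ coincides with the exact structure inherited from the Grothendieck abelian category~$\Modr\cS$. For one inclusion: a pure-admissible short exact sequence in $\Modrfl\cS$ is, by definition, a directed colimit in $\bC^3(\Modrfl\cS)$ of split short exact sequences of finitely generated projective right $\cS$\+modules; since $\Modrfl\cS$ is closed under directed colimits in $\Modr\cS$, this colimit may be computed in $\bC^3(\Modr\cS)$, where directed colimits are exact, and so it is a short exact sequence in $\Modr\cS$ all of whose terms are flat. For the reverse inclusion: given a short exact sequence $0\rarrow X\rarrow Y\overset p\rarrow Z\rarrow0$ in $\Modr\cS$ with $X$, $Y$, $Z$ flat, every morphism to $Z$ from a finitely generated projective right $\cS$\+module lifts along $p$ by projectivity, so $p$ is an admissible epimorphism in the pure exact structure on $\Modrfl\cS$ by Proposition~\ref{pure-epimorphisms-characterized-prop}; moreover the kernel of $p$ formed in $\Modrfl\cS$ agrees with the kernel formed in $\Modr\cS$ (as $\Modrfl\cS$ is a full subcategory containing~$X$), so the given sequence is precisely the admissible short exact sequence associated with the admissible epimorphism~$p$.

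The only genuine subtlety is bookkeeping: one has to track the compatibility of directed colimits and of kernels among $\sA$, $\Modrfl\cS$, and $\Modr\cS$, and, since $\cS=\sA_\fp$ is not literally a full subcategory of representable modules in $\Modr\cS$ (it is only equivalent to $\overline\cS=(\Modrfl\cS)_\fp$), one should read ``finitely generated projective $\cS$\+module'' as ``direct summand of a representable module'' throughout. None of this is hard. One could also argue more slickly by noting that the inherited exact structure on $\Modrfl\cS$ restricts to the split exact structure on $(\Modrfl\cS)_\fp$, and that once its local coherence is checked, Theorem~\ref{lc-exact-structures-bijective-correspondence} forces it to agree with the pure exact structure; but checking local coherence of the inherited structure amounts to essentially the same computation.
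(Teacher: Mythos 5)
Your proposal is correct and follows essentially the same route as the paper: the paper's own proof simply notes that $\sA_\fp$ is equivalent to the category of finitely generated projective right $\cS$\+modules (being additive and idempotent-complete) so that $\sA\simeq\Modrfl\cS$ by Govorov--Lazard, and then refers to the argument of Example~\ref{flat-modules-example} for the comparison of exact structures. You have merely written out the details (the restricted Yoneda functor, and the two inclusions via exactness of directed colimits and Proposition~\ref{pure-epimorphisms-characterized-prop}) that the paper leaves implicit.
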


\begin{proof}
 This is~\cite[Section~1.4]{CB} or~\cite[Proposition~4.2]{Sto},
and a particular case of~\cite[Theorem~2.26]{AR}.
 The point is that the category $\sA_\fp$ is naturally equivalent to
the category of finitely generated projective right $\cS$\+modules
(since $\sA_\fp$ is additive and idempotent-complete), and the flat
right $\cS$\+modules are the directed colimits of the finitely
generated projective ones.
 For the second assertion, cf.\ Example~\ref{flat-modules-example}.
\end{proof}

 Let $\sA$ be an exact category.
 We will say that $\sA$ has \emph{exact functors of directed colimit}
if all directed colimits exist in the additive category $\sA$ and
the directed colimits of admissible short exact sequences are
admissible short exact sequences.
 A locally $\kappa$\+coherent exact category \emph{need not} have
exact functors of directed colimit, but it always has exact functors
of $\kappa$\+directed colimit (in the obvious sense), by
Lemma~\ref{short-exact-preserved-by-directed-colimits}.

\begin{prop} \label{efficient-prop}
 Let $\sA$ be a $\kappa$\+accessible additive category with directed
colimits, endowed with an exact structure with exact functors of
directed colimit.
 Then $\sA$ is an efficient exact category.
\end{prop}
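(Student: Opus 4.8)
The plan is to verify, one at a time, the conditions defining an efficient exact category in \cite[Definition~3.4]{Sto-ICRA}: weak idempotent-completeness; the existence of set-indexed coproducts; the existence of arbitrary transfinite compositions of admissible monomorphisms, together with the closure of the admissible monomorphisms under such compositions; and the smallness of every object relative to the class of admissible monomorphisms (with an essentially small generating set available). The first, second, and last of these are formal consequences of accessibility; the third is the substantive point, and the only place where the hypothesis on directed colimits is used.

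For the formal conditions: any $\kappa$-accessible additive category is idempotent-complete by \cite[Observation~2.4 and Proposition~1.16]{AR} (as recalled in Section~\ref{basic-properties-secn}), hence weakly idempotent-complete in the sense of \cite[Section~7]{Bueh}. An additive category with directed colimits has all set-indexed coproducts, an arbitrary coproduct being the directed colimit of its finite subcoproducts. Finally, every object of $\sA$ is a $\kappa$-directed colimit --- indexed by a \emph{set} --- of $\kappa$-presentable objects, hence is $\lambda$-presentable for a sufficiently large regular cardinal~$\lambda$ (cf.\ \cite[Theorem~2.11]{AR}), and a $\lambda$-presentable object is small with respect to any class of morphisms, in particular with respect to the admissible monomorphisms; moreover the essentially small subcategory $\sA_{<\kappa}$ furnishes a generating set.

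It remains to treat transfinite compositions of admissible monomorphisms. Existence is immediate, since a transfinite composition is a colimit over a well-ordered chain, hence a directed colimit, and such colimits exist in $\sA$ by hypothesis. To show that the transfinite composition $X_0 \rarrow X_\gamma = \varinjlim_{\beta < \gamma} X_\beta$ of a continuous chain of admissible monomorphisms $X_\beta \rarrow X_{\beta+1}$ is again an admissible monomorphism, I argue by transfinite induction on~$\gamma$; the successor step uses only that a composition of admissible monomorphisms is an admissible monomorphism. For the limit step, assuming that $X_0 \rarrow X_\beta$ is an admissible monomorphism with cokernel $C_\beta$ for each $\beta < \gamma$, the transition maps $X_\beta \rarrow X_{\beta'}$ --- being compatible with the copies of $X_0$ --- induce maps $C_\beta \rarrow C_{\beta'}$, so that $\beta \mapsto (0 \rarrow X_0 \rarrow X_\beta \rarrow C_\beta \rarrow 0)$ is a diagram over the directed set $\{\beta : \beta < \gamma\}$ in the category of three-term complexes $\bC^3(\sA)$, consisting of admissible short exact sequences. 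Passing to the directed colimit and invoking the hypothesis that directed colimits of admissible short exact sequences are admissible short exact sequences --- together with $\varinjlim_{\beta<\gamma} X_\beta = X_\gamma$ (continuity of the chain) and $\varinjlim_{\beta<\gamma} X_0 = X_0$ (constant diagram over a directed set) --- produces an admissible short exact sequence $0 \rarrow X_0 \rarrow X_\gamma \rarrow \varinjlim_\beta C_\beta \rarrow 0$, whence $X_0 \rarrow X_\gamma$ is an admissible monomorphism and the induction closes. I expect no serious difficulty: the only step requiring care is this limit step --- specifically, checking that the cokernels $C_\beta$ assemble into a genuine diagram in $\bC^3(\sA)$ so that the exactness hypothesis applies, and identifying the colimit of the chain correctly via its continuity.
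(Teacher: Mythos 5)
Your verification of (Ef0), (Ef2), and the transfinite-composition axiom (Ef1) matches the paper's proof; in fact your limit-ordinal step --- passing to the directed colimit of the short exact sequences $0\rarrow X_0\rarrow X_\beta\rarrow C_\beta\rarrow 0$ over $\beta<\gamma$ and using continuity of the chain --- spells out in full the argument the paper compresses into a single parenthetical remark, and it is correct as written. The one place where your proposal is too quick is the generator axiom (Ef3), which you file under ``formal consequences of accessibility.'' It is not quite that: one must exhibit, for every object $B\in\sA$, an admissible epimorphism onto $B$ from a coproduct of objects of $\sA_{<\kappa}$ \emph{in the given exact structure}, and accessibility by itself only supplies the canonical morphism $\coprod_{\xi\in\Xi}S_\xi\rarrow\varinjlim_{\xi\in\Xi}S_\xi=B$ with no a priori admissibility. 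The paper's argument is that this morphism is a $\kappa$\+pure epimorphism (see Lemma~\ref{admissible-epis-characterized}, where one can take $T=S$), i.e.\ a $\kappa$\+directed colimit of split epimorphisms, and hence a deflation in any exact structure with exact functors of ($\kappa$\+)directed colimit; so the exactness hypothesis is used a second time here, not only in (Ef1). This is an easily repaired under-justification rather than a step that would fail, but it does need to be supplied.
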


\begin{proof}
 We check the axioms one by one.

 (Ef0) or~(GT0) Any additive category with $\kappa$\+directed colimits
is idempotent-complete.

 (Ef1) or~(GT1) All transfinite compositions of chains of morphisms
exist in any category with directed colimits.
 All transfinite compositions of admissible monomorphisms are
admissible monomorphisms in any exact category with exact functors
of directed colimit (since finite compositions of admissible
monomorphisms are admissible monomorphisms in any exact category,
and directed colimits of admissible monomorphisms are admissible
monomorphisms in an exact category with exact functors of directed
colimit).

 (Ef2) or~(GT2) All objects in any accessible category have
presentability ranks, so they are small with respect to the class
of all morphisms.
 In fact, if $B=\varinjlim_{\xi\in\Xi}S_\xi$ in a category $\sA$ with
$\kappa$\+directed colimits, where $\Xi$ is a small $\kappa$\+filtered
category and $S_\xi\in\sA_{<\kappa}$ for all $\xi\in\Xi$, and if
$\lambda\ge\kappa$ is a regular cardinal greater than the cardinality
of (the set of all objects and morphisms in) $\Xi$, then the object $B$
is $\lambda$\+presentable in~$\sA$ (see~\cite[Proposition~1.16]{AR}).

 (Ef3) or~(GT3) The point is that any admissible epimorphism in
the $\kappa$\+pure exact structure on $\sA$ is also an admissible
epimorphism in any exact structure on $\sA$ with exact functors of
$\kappa$\+directed colimit.
 So it suffices to consider the case of the $\kappa$\+pure exact
structure on~$\sA$.

 Now the coproduct of all representatives of isomorphism
classes of $\kappa$\+presentable objects is a generator in
the $\kappa$\+pure exact structure on any $\kappa$\+accessible
additive category~$\sA$.
 In fact, if $B=\varinjlim_{\xi\in\Xi}S_\xi$ in $\sA$, where $\Xi$ is
a small $\kappa$\+filtered category and $S_\xi\in\sA_{<\kappa}$ for all
$\xi\in\Xi$, then the natural morphism $\coprod_{\xi\in\Xi}S_\xi\rarrow
\varinjlim_{\xi\in\Xi}S_\xi$ is a $\kappa$\+pure epimorphism in (i.~e.,
an admissible epimorphism in the $\kappa$\+pure exact structure
on)~$\sA$, as one can see from
Proposition~\ref{pure-epimorphisms-characterized-prop}.
\end{proof}

\begin{thm} \label{grothendieck-type-theorem}
 Let\/ $\sA$ be a finitely accessible category endowed with an exact
structure with exact functors of directed colimit.
 Then\/ $\sA$ is an exact category of Grothendieck type.
\end{thm}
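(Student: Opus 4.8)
The statement to prove is that a finitely accessible category $\sA$ with an exact structure having exact directed colimits is of Grothendieck type, i.e.\ it satisfies axioms (GT0)--(GT4) of \cite{Sto-ICRA}. By Proposition~\ref{efficient-prop}, we already know $\sA$ is an efficient exact category, which handles (GT0)--(GT3) (equivalently (Ef0)--(Ef3)). So the only thing left is the additional axiom distinguishing ``Grothendieck type'' from ``efficient'': (GT4), which asks that $\sA$ be deconstructible in itself, i.e.\ that there exist a \emph{set} of objects $\sU\subset\sA$ with $\sA=\Fil(\sU)$.

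The plan is to take $\sU$ to be (a set of representatives of isomorphism classes of) the finitely presentable objects $\sA_\fp$, and to show every object of $\sA$ is filtered by objects of $\sA_\fp$. First I would invoke Lemma~\ref{pure-exact-structure-is-flat-modules}: writing $\cS$ for a skeleton of $\sA_\fp$ as a ring with many objects, we may identify $\sA$ with $\Modrfl\cS$, the category of flat right $\cS$\+modules, carrying the exact structure inherited from $\Modr\cS$. Since the given exact structure on $\sA$ has exact directed colimits, its admissible short exact sequences are directed colimits of split short exact sequences in $\sA_\fp$, hence they are in particular pure-exact; thus every admissible monomorphism in the pure structure is an admissible monomorphism in our structure, and it suffices to produce a filtration by finitely presentable objects whose successive extensions are \emph{pure} (equivalently, $\Modr\cS$-exact) monomorphisms. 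Then I would apply the classical Kaplansky-style argument that any flat module over a ring with many objects is filtered, via a smooth chain of pure submodules, by countably presentable (indeed finitely presentable, after a further refinement using Govorov--Lazard / the fact that $\sA_\fp$ is the finitely presentable objects) flat submodules. Concretely: well-order a generating set, build the filtration transfinitely so that each $F_{\beta+1}/F_\beta$ is finitely presentable and each $F_\beta\hookrightarrow F$ is pure; smoothness at limits is automatic. This exhibits $\sA=\Fil(\sA_\fp)$.

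The main obstacle is the purity bookkeeping in the transfinite construction: one must ensure simultaneously that the partial unions $F_\beta$ are pure submodules of $F$ (so the inclusions are admissible monomorphisms in the ambient $\Modr\cS$, hence in $\sA$) \emph{and} that the quotients $F_{\beta+1}/F_\beta$ land in $\sA_\fp$ rather than merely being countably presentable. The standard trick is to alternate between ``closing up under purity'' (enlarging a chosen subset to a pure submodule, which only increases cardinality by $\aleph_0$) and ``adding one new generator''; since finitely generated flat $=$ finitely presentable projective over $\cS$, and pure submodules of flat modules are flat, the quotients are finitely presentable flat modules. I would cite the relevant deconstructibility statement for flat modules over a ring with many objects (the $\kappa=\aleph_0$ case is essentially \cite[Corollary~7.11]{Pacc} combined with the Govorov--Lazard description in Example~\ref{flat-modules-example}, or one can quote \v St\!'ov\'\i\v cek's own treatment of flat modules as the motivating example of deconstructibility), and then transport it along the equivalence $\sA\simeq\Modrfl\cS$, upgrading the pure monomorphisms to admissible monomorphisms in the given exact structure via Lemma~\ref{short-exact-preserved-by-directed-colimits} and the inclusion of the pure exact structure into the given one. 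Together with Proposition~\ref{efficient-prop}, this completes the verification that $\sA$ is of Grothendieck type.
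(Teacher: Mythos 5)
Your overall strategy is the paper's: dispose of (GT0)--(GT3) via Proposition~\ref{efficient-prop}, reduce (GT4) to deconstructibility of $\Modrfl\cS$ for a ring with many objects $\cS$ via Lemma~\ref{pure-exact-structure-is-flat-modules}, and then observe that a filtration in the pure exact structure is a filtration in the given one. But there is a genuine error in your execution of the deconstructibility step: you take $\sU=\sA_\fp$ and claim $\sA=\Fil(\sA_\fp)$. This is false in general. For $\cS=\boZ$ and $\sA=\Modrfl\boZ$ with the pure exact structure, $\Fil(\sA_\fp)$ consists of modules filtered by finitely generated free groups, and by Kaplansky's theorem $\overline{\Fil(\sA_\fp)}$ is exactly the class of projective modules (this is precisely Example~\ref{first-periodicity-examples}(1)); so the flat non-projective module $\mathbb Q$ does not lie in $\Fil(\sA_\fp)$. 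Your justification also contains two incorrect claims: a finitely generated flat module need not be projective (only finitely \emph{presented} flat modules are), and, more to the point, the ``close up under purity'' step in the Kaplansky-style construction produces \emph{countably} generated pure submodules, so the successive quotients $F_{\beta+1}/F_\beta$ are countably presented flat modules, not finitely presentable ones. No refinement will push them down to $\sA_\fp$.

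The fix is that axiom (GT4) only asks for \emph{some} set $\sU$ with $\sA=\Fil(\sU)$, with no presentability constraint. Taking $\sU$ to be (representatives of) the countably presented flat $\cS$\+modules, the transfinite construction you sketch does work; this is exactly~\cite[Lemma~1 and Proposition~2]{BBE}, which is what the paper cites. With that substitution the rest of your argument goes through: pure filtrations are filtrations in the given exact structure because split short exact sequences are admissible in any exact structure on the weakly idempotent-complete category $\sA$ and directed colimits of admissible sequences are admissible by hypothesis. (One more small point: your sentence asserting that the admissible short exact sequences of the \emph{given} structure are pure-exact is backwards and false in general --- the abelian exact structure on $\Modr R$ for coherent $R$ is a counterexample --- but the inclusion you actually use, namely that pure-exact sequences are admissible in the given structure, is the correct one.)
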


\begin{proof}
 In view of Proposition~\ref{efficient-prop}, it remains to check
the last axiom~(GT4).
 Indeed, for any ring with many objects $\cS$, the exact category of
flat $\cS$\+modules $\Modrfl\cS$ is deconstructible in itself,
essentially by~\cite[Lemma~1 and Proposition~2]{BBE}.
 In view of Lemma~\ref{pure-exact-structure-is-flat-modules}, it follows
that any finitely accessible additive category $\sA$ endowed with
the pure exact structure is deconstructible in itself.
 As, under the assumptions of the theorem, any filtration in the pure
exact structure on $\sA$ is also a filtration in the given
exact structure on $\sA$, we can conclude that any exact category $\sA$
satisfying the assumptions of the theorem is deconstructible in itself.
\end{proof}

\begin{cor} \label{loc-coh-are-grothendieck-type-cor}
 All locally coherent exact categories are of Grothendieck type.
 In particular, in any locally coherent exact category, there are
enough injective objects.
\end{cor}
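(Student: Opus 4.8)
The plan is to derive this corollary as an immediate consequence of Theorem~\ref{grothendieck-type-theorem}. The key observation is that a locally coherent exact category is precisely a finitely accessible additive category endowed with an exact structure possessing exact functors of directed colimit, so that Theorem~\ref{grothendieck-type-theorem} applies verbatim.

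First I would recall that, since ``locally coherent'' means ``locally $\aleph_0$\+coherent'' and for the cardinal $\kappa=\aleph_0$ the $\kappa$\+directed colimits are just the ordinary directed colimits, the underlying additive category of a locally coherent exact category is finitely accessible; in particular it has all directed colimits. Next I would invoke Lemma~\ref{short-exact-preserved-by-directed-colimits} in the case $\kappa=\aleph_0$: in any locally coherent exact category the class of admissible short exact sequences is preserved by directed colimits. This is exactly the statement that the functors of directed colimit are exact, hence the hypotheses of Theorem~\ref{grothendieck-type-theorem} are satisfied, and we conclude that the exact category is of Grothendieck type.

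Finally, the last assertion of the corollary follows from the general fact that every exact category of Grothendieck type has enough injective objects, which is \cite[Corollary~5.9]{Sto-ICRA}. I do not expect any real obstacle: all the substantive work has already been done in Theorem~\ref{grothendieck-type-theorem} (and the results it rests on, Proposition~\ref{efficient-prop} together with the deconstructibility of the category of flat modules over a ring with many objects), and the only point requiring a moment's care is the routine identification of $\aleph_0$\+directed colimits with directed colimits, which is what makes Lemma~\ref{short-exact-preserved-by-directed-colimits} yield exactly the input ``exact functors of directed colimit'' demanded by the theorem.
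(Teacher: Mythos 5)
Your proposal is correct and follows the paper's own route: the paper likewise deduces the corollary from Theorem~\ref{grothendieck-type-theorem} together with \cite[Corollary~5.9]{Sto-ICRA}, the point being exactly the one you make, namely that Lemma~\ref{short-exact-preserved-by-directed-colimits} for $\kappa=\aleph_0$ supplies the hypothesis of exact functors of directed colimit. Your explicit verification of the hypotheses is a useful elaboration of what the paper leaves implicit, but it is the same argument.
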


\begin{proof}
 Follows from Theorem~\ref{grothendieck-type-theorem}
and~\cite[Corollary~5.9]{Sto-ICRA}.
 Alternatively, one can refer to
Propositions~\ref{canonical-embedding-properties}(a,c)
and~\ref{locally-FP-infty-prop} below,
\cite[Proposition~A(i\+-ii) in Section~0.2]{Pgen},
and~\cite[Theorem~3.16]{Sto-ICRA} (for the first
assertion of the corollary).
\end{proof}

\Section{Canonical Embedding into Abelian Category}
\label{canonical-embedding-secn}

 The aim of this section is to represent an arbitrary locally
coherent exact category $\sC$ as a full subcategory in a locally
finitely presentable (in fact, locally type~$\FP_\infty)$ Grothendieck
abelian category~$\sK$, and show that $\sC$ enjoys nice closure
properties as a full subcategory in~$\sK$.
 It is important that the locally coherent exact structure on $\sC$
turns out to be inherited from the abelian exact structure on~$\sK$.

 For any small exact category $\sE$, there is a classical construction
of the canonical embedding of $\sE$ into the abelian category $\sK$ of
\emph{additive sheaves} on~$\sE$.
 Let us recall some details~\cite[Proposition~A.2]{Kel},
\cite[Section~A.7]{TT}, \cite[Appendix~A]{Bueh}.

 The category $\sK$ is defined as the category of all left exact
contravariant functors $\sE^\sop\rarrow\Ab$.
 Here $\Ab$ denotes the category of abelian groups; and a contravariant
functor $K\:\sE^\sop\rarrow\Ab$ is said to be \emph{left exact} if,
for any admissible short exact sequence $0\rarrow E'\rarrow E\rarrow
E''\rarrow0$ in $\sE$, the sequence of abelian groups $0\rarrow K(E'')
\rarrow K(E)\rarrow K(E')$ is left exact.
 The functor $\rho\:\sE\rarrow\sK$ assigns to every object $E\in\sE$
the representable functor $\rho(E)=\Hom_\sE({-},E)$.
 The following proposition summarizes the classical theory.

\begin{prop} \label{canonical-embedding-properties}
\textup{(a)} The functor~$\rho$ is fully faithful.
 The category\/ $\sK$ is abelian. \par
\textup{(b)} A short sequence\/ $0\rarrow E'\rarrow E\rarrow E''
\rarrow0$ in\/ $\sE$ is admissible exact if and only if its image\/
$0\rarrow\rho(E')\rarrow\rho(E)\rarrow\rho(E'')\rarrow0$ is exact
in~$\sK$. \par
\textup{(c)} The essential image of the functor~$\rho$ is closed under
extensions in\/~$\sK$. \par
\textup{(d)} For any objects $K\in\sK$ and $E\in\sE$, and any
epimorphism $K\rarrow\rho(E)$ in\/ $\sK$, there exists an admissible
epimorphism $F\rarrow E$ in\/ $\sE$ and a morphism $\rho(F)\rarrow K$
in\/ $\sK$ making the triangular diagram $\rho(F)\rarrow K\rarrow
\rho(E)$ commutative in\/~$\sK$. \par
\textup{(e)} If the category\/ $\sE$ is weakly idempotent-complete,
then the essential image of the functor~$\rho$ is closed under
kernels of epimorphisms in\/~$\sK$.
\end{prop}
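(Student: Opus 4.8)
The plan is to identify $\sK$ with the category of $\Ab$\+valued sheaves on $\sE$ for the Grothendieck topology $\tau$ whose covering sieves are exactly those that contain an admissible epimorphism, and then to read off all five assertions from the resulting exact sheafification functor together with the Yoneda lemma. First I would check that $\tau$ is a legitimate topology: in an exact category, pullbacks of admissible epimorphisms along arbitrary morphisms exist and are again admissible epimorphisms, and admissible epimorphisms are closed under composition. Unwinding the sheaf condition for a covering $\{F\twoheadrightarrow U\}$ — and using that the kernel pair $F\times_U F$ splits off the kernel of $F\twoheadrightarrow U$, so that the condition reduces to left exactness of $0\rarrow K(U)\rarrow K(F)\rarrow K(\ker(F\twoheadrightarrow U))$ — one sees that a presheaf $K$ is a $\tau$\+sheaf precisely when it is left exact in the sense of the statement. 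By the classical theory (\cite[Proposition~A.2]{Kel}, \cite[Section~A.7]{TT}, \cite[Appendix~A]{Bueh}) the inclusion of $\sK$ into the Grothendieck abelian category of $\Ab$\+valued presheaves on $\sE$ admits an exact left adjoint $L$ (sheafification), so $\sK$ is itself a Grothendieck abelian category, in particular abelian. A representable functor $\rho(E)=\Hom_\sE({-},E)$ is left exact because in an exact category an admissible monomorphism is the kernel of its cokernel and an admissible epimorphism is the cokernel of its kernel; hence $\rho$ takes values in $\sK$, and it is fully faithful by Yoneda. This gives~(a). I record for later use that kernels in $\sK$ coincide with pointwise kernels, cokernels in $\sK$ are obtained by applying $L$ to pointwise cokernels, and (as $L$ is exact and restricts to the identity on $\sK$) a morphism of $\sK$ is an epimorphism iff it is locally surjective. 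The last fact applied to $\psi\:K\rarrow\rho(E)$ with the section $\id_E\in\rho(E)(E)$ produces an admissible epimorphism $\pi\:F\rarrow E$ and an element of $K(F)$ mapping to $\pi$; by Yoneda this element is a morphism $\rho(F)\rarrow K$ with $\rho(F)\rarrow K\rarrow\rho(E)$ equal to $\rho(\pi)$, which is~(d).

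For ($\Rightarrow$) of~(b), let $0\rarrow E'\overset i\rarrow E\overset q\rarrow E''\rarrow0$ be admissible exact. The pointwise kernel of $\rho(q)$ is $\rho(E')$, since $i$ is a categorical kernel of $q$ in $\sE$; so $\ker_\sK\rho(q)=\rho(i)$. The natural map from the pointwise cokernel $P$ of $\rho(i)$ to $\rho(E'')$ is injective on sections (two morphisms into $E$ with equal composites to $E''$ differ by one factoring through $E'$) and locally surjective (pull back $q$ along a given morphism $X\rarrow E''$), hence becomes an isomorphism after $L$; so $\coker_\sK\rho(i)=\rho(E'')$, and the image sequence is short exact in $\sK$. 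Conversely, suppose $0\rarrow\rho(E')\overset{\rho(i)}\rarrow\rho(E)\overset{\rho(q)}\rarrow\rho(E'')\rarrow0$ is exact in $\sK$; then $qi=0$ and $i$ is a categorical kernel of $q$ in $\sE$. By~(d) there are an admissible epimorphism $p\:F\rarrow E''$ and $g\:F\rarrow E$ with $qg=p$. The morphism $E\oplus F\rarrow E''$, $(e,f)\mapsto q(e)+p(f)$, is the composite of $\id_E\oplus p\:E\oplus F\rarrow E\oplus E''$ with the split epimorphism $E\oplus E''\rarrow E''$, $(e,x)\mapsto q(e)+x$, hence is an admissible epimorphism; composing it with the automorphism $(e,f)\mapsto(e-g(f),f)$ of $E\oplus F$ gives $q\circ\mathrm{pr}_E$, so $q\circ\mathrm{pr}_E\:E\oplus F\rarrow E''$ is an admissible epimorphism. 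Its categorical kernel is the monomorphism $i\oplus\id_F\:E'\oplus F\rarrow E\oplus F$, which is therefore an admissible monomorphism with cokernel $q\circ\mathrm{pr}_E$; and the resulting admissible short exact sequence $0\rarrow E'\oplus F\rarrow E\oplus F\rarrow E''\rarrow0$ is, in $\bC^3(\sE)$, the direct sum of $0\rarrow E'\overset i\rarrow E\overset q\rarrow E''\rarrow0$ with the trivial admissible short exact sequence $0\rarrow F\overset{\id}\rarrow F\rarrow0\rarrow0$. Since a direct summand of an admissible short exact sequence is admissible short exact, the original sequence is admissible exact, proving~(b).

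For~(c), given a short exact sequence $0\rarrow\rho(A)\rarrow K\rarrow\rho(B)\rarrow0$ in $\sK$, apply~(d) to the epimorphism $K\rarrow\rho(B)$ to get an admissible epimorphism $p\:F\rarrow B$ and a morphism $\rho(F)\rarrow K$ with composite $\rho(p)$ to $\rho(B)$. With $B'=\ker_\sE p$, the composite $\rho(B')\rarrow\rho(F)\rarrow K$ has zero composite to $\rho(B)$, hence factors through $\rho(A)=\ker_\sK(K\rarrow\rho(B))$, giving $m\:B'\rarrow A$. Forming the pushout $C=A\sqcup_{B'}F$ in $\sE$ (legitimate, $B'\rarrow F$ being an admissible monomorphism) yields an admissible short exact sequence $0\rarrow A\rarrow C\rarrow B\rarrow0$, whose $\rho$\+image is short exact in $\sK$ by~(b) and realizes $\rho(C)$ as the pushout $\rho(A)\sqcup_{\rho(B')}\rho(F)$ in $\sK$; since $\rho(A)\rarrow K$ and $\rho(F)\rarrow K$ agree on $\rho(B')$, they induce $\rho(C)\rarrow K$, a morphism of short exact sequences fixing $\rho(A)$ and $\rho(B)$, hence an isomorphism by the five lemma in $\sK$, so $K\cong\rho(C)$. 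For~(e), assume $\sE$ is weakly idempotent-complete and let $\rho(q)\:\rho(D)\rarrow\rho(E)$ be an epimorphism in $\sK$ (every morphism between objects of the essential image has this form, by~(a)). By~(d) there are an admissible epimorphism $p\:F\rarrow E$ and $g\:F\rarrow D$ with $qg=p$; exactly as in~(b) the morphism $q\circ\mathrm{pr}_D\:D\oplus F\rarrow E$ is an admissible epimorphism. Let $\kappa\:K_1\rarrow D\oplus F$ be its categorical kernel; then $\rho(K_1)\cong\ker_\sK\rho(q)\oplus\rho(F)$, the summand $\rho(F)$ being split off by the morphism $F\rarrow K_1$ corresponding by Yoneda to $K_1\overset\kappa\rarrow D\oplus F\xrightarrow{\mathrm{pr}_F}F$. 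As $F\rarrow K_1$ is a split monomorphism and $\sE$ is weakly idempotent-complete, it has a cokernel $K_0\in\sE$ with $K_1\cong F\oplus K_0$, whence $\ker_\sK\rho(q)\cong\rho(K_0)$ lies in the essential image of~$\rho$.

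The main obstacle is entirely in the first paragraph: verifying that left exactness in the stated sense is exactly the $\tau$\+sheaf condition, and invoking (or constructing, via the plus-construction) an \emph{exact} sheafification reflector onto $\sK$. Once that reflector — equivalently, the fact that $\sK$ is a localizing/reflective subcategory of a functor category with exact reflector — is in hand, parts~(b)--(e) are diagram chases organized around~(d), and~(d) is just the local-surjectivity description of epimorphisms in a sheaf category applied to the identity of $E$. A subsidiary point requiring care in~(e) is the use of weak idempotent-completeness to split off the representable summand $\rho(F)$ at the level of $\sE$, which is precisely why that hypothesis is needed there but not in~(a)--(d).
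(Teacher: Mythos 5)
Your proof is correct, and it follows essentially the same route as the paper, whose own ``proof'' of this proposition consists entirely of citations to the classical literature (Yoneda for full faithfulness, and \cite[Appendix~A.7]{TT}, \cite[Appendix~A]{Bueh}, \cite[Appendix~A]{Kel} for the rest). What you have written is a self-contained reconstruction of exactly those cited arguments: Thomason--Trobaugh's A.7 realizes $\sK$ as the additive sheaves for the topology generated by admissible epimorphisms and derives everything from exact sheafification and local surjectivity of epimorphisms, which is precisely your organization, with part~(d) as the engine for (b), (c), (e) and weak idempotent-completeness entering only to split off the representable summand of the kernel in~(e). The one step worth double-checking is your appeal in~(b) to ``a direct summand of an admissible short exact sequence is admissible short exact'': in your situation the complementary summand is $0\rarrow F\rarrow F\rarrow 0\rarrow 0$, so the claim follows cleanly by pushing out the admissible monomorphism $E'\oplus F\rightarrowtail E\oplus F$ along the projection $E'\oplus F\rarrow E'$ (and the paper itself asserts the general fact in the proof of Lemma~\ref{idempotent-completion-of-exact-category}), so this is not a gap.
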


\begin{proof}
 The first assertion of part~(a) holds by Yoneda's lemma.
 The second assertion of part~(a) is~\cite[Proposition~A.7.13]{TT}
or~\cite[Lemma~A.20]{Bueh}.
 Part~(b) is~\cite[Propositions~A.7.14 and~A.7.16(a)]{TT}
or~\cite[Lemmas~A.21 and~A.23]{Bueh}.
 Part~(c) is~\cite[Lemma~A.7.18]{TT} or~\cite[Lemma~A.24]{Bueh}.
 Part~(d) is~\cite[second paragraph of Lemma~A.7.15]{TT}
or~\cite[Lemma~A.22]{Bueh}.
 Part~(e) is~\cite[Proposition~A.7.16(b)]{TT}.
\end{proof}

 Following~\cite[Section~3]{Gil} and~\cite[Section~3 of the published
version or Section~2 of the \texttt{arXiv} version]{BGP}, an object $S$
in a Grothendieck category $\sK$ is said to be \emph{of type
$\FP_\infty$} if, for every $n\ge0$, the functor $\Ext^n_\sK(S,{-})\:
\sK\rarrow\Ab$ preserves directed colimits.
 Obviously, objects of type $\FP_\infty$ are finitely presentable.
 A Grothendieck category $\sK$ is said to be \emph{locally type
$\FP_\infty$} if it has a set of generators consisting of objects
of type~$\FP_\infty$.

\begin{prop} \label{locally-FP-infty-prop}
 The category\/ $\sK$ is a locally finitely presentable Grothendieck
category, and in fact, it is locally type\/~$\FP_\infty$.
 The objects in the image of the functor $\rho\:\sE\rarrow\sK$ are
of type\/ $\FP_\infty$, and they form a set of generators of\/~$\sK$.
\end{prop}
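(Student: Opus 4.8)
The plan is to establish the three assertions of Proposition~\ref{locally-FP-infty-prop} in a definite logical order, bootstrapping from the generating set to local presentability and then to the stronger $\FP_\infty$ property. First I would recall that $\sK$ is the category of left exact functors $\sE^\sop\rarrow\Ab$; by Proposition~\ref{canonical-embedding-properties}(a) it is abelian, and by Yoneda's lemma the representables $\rho(E)$ generate the presheaf category, hence a fortiori they generate the full subcategory $\sK$ of left exact functors once one checks that every left exact $K$ is a quotient (indeed a colimit) of representables inside $\sK$. The standard way to see this and simultaneously to get Grothendieck-ness is to exhibit $\sK$ as a reflective subcategory of the presheaf category $\Fun(\sE^\sop,\Ab)$ with an exact reflector (sheafification for the topology on $\sE$ whose covers are generated by the admissible epimorphisms). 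Exactness of the reflector gives that $\sK$ is a Grothendieck abelian category with generators $\{\rho(E)\}_{E\in\sE}$, and since $\sE$ is small this is a set.

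Next I would prove that each $\rho(E)$ is finitely presentable in $\sK$. Here the key point is that directed colimits in $\sK$ are computed \emph{pointwise}: a directed colimit of left exact functors, taken pointwise in $\Fun(\sE^\sop,\Ab)$, is again left exact, because filtered colimits of abelian groups are exact and therefore preserve the left exactness condition term by term. Granting that, $\Hom_\sK(\rho(E),K)\cong K(E)$ by Yoneda, and $K\mapsto K(E)$ evidently commutes with directed (pointwise) colimits; so $\rho(E)$ is finitely presentable, and $\sK$ is locally finitely presentable. The main work is the $\FP_\infty$ claim: I must show $\Ext^n_\sK(\rho(E),{-})$ preserves directed colimits for all $n\ge 0$. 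The natural strategy is induction on~$n$, using dimension shifting. Given $K$ and a directed system $(K_i)$, one resolves $\rho(E)$ on the left; the problem is that $\sK$ need not have enough projectives, and even if the presheaf category does, its projectives are not in $\sK$.

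The way around this is to build, for each $E$, a short exact sequence $0\rarrow R_E\rarrow P_E\rarrow\rho(E)\rarrow0$ in $\sK$ with $P_E$ a finite coproduct of representables (hence again of type $\FP_\infty$ by the $n=0$ case, once we know that) and with $R_E$ \emph{again filtered-presentable of the same kind} — concretely, $R_E$ should itself be a finitely presentable object admitting such a resolution. This is exactly where Proposition~\ref{canonical-embedding-properties}(d) and~(e) enter: part~(d) lets one cover $\rho(E)$ by a representable $\rho(F)$ with $F\to E$ an admissible epimorphism in $\sE$, and part~(e) (using that $\sE$, being $\sA_\fp$ for the ambient finitely accessible category, is weakly idempotent-complete) guarantees that the kernel of $\rho(F)\to\rho(E)$ is \emph{again} of the form $\rho(F')$ for some $F'\in\sE$ — because the essential image of $\rho$ is closed under kernels of epimorphisms. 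Thus $R_E\cong\rho(\ker_\sE(F\to E))$ is representable, and we may iterate. With such resolutions of $\rho(E)$ by representables in hand, $\Ext^n_\sK(\rho(E),K)$ can be computed as the cohomology of $\Hom_\sK(\rho(F_\bu),K)=K(F_\bu)$, a complex of pointwise evaluations; since directed colimits in $\sK$ are pointwise and filtered colimits commute with finite limits and with homology of complexes of abelian groups, $\Ext^n_\sK(\rho(E),{-})$ commutes with directed colimits for every $n$. This gives that each $\rho(E)$ is of type~$\FP_\infty$, and since these form a set of generators, $\sK$ is locally type~$\FP_\infty$.

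I expect the main obstacle to be the construction of the resolution of $\rho(E)$ by representables, i.e.\ making rigorous the claim that the kernel of the covering map $\rho(F)\to\rho(E)$ is again representable and that one can iterate this indefinitely in a functorial-enough way to assemble a full left resolution $\cdots\rarrow\rho(F_1)\rarrow\rho(F_0)\rarrow\rho(E)\rarrow0$. The subtlety is that $\rho$ is only left exact, not exact, so at each stage one must re-examine what ``kernel'' means in $\sK$ versus in $\sE$; Proposition~\ref{canonical-embedding-properties}(b) says $\rho$ takes admissible short exact sequences in $\sE$ to exact sequences in $\sK$, and combined with parts~(d) and~(e) this should close the loop, but it requires care to verify that the kernel $F' = \ker_\sE(F\to E)$ produced at each step admits an admissible epimorphism \emph{from} some representable (again via part~(d) applied to the epimorphism $\rho(F')\twoheadrightarrow$ its own image, noting $\rho(F')$ is itself representable so part~(d) applies with $K=\rho(F')$). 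Once the resolution exists, everything else is the routine bookkeeping that pointwise filtered colimits of abelian groups are exact.
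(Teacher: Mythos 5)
Your handling of the first two assertions is essentially the paper's argument: $\sK$ is closed under directed colimits in $\Fun_\boZ(\sE^\sop,\Ab)$ because directed colimits of left exact sequences of abelian groups are left exact, so the representables $\rho(E)$, being finitely presentable (indeed projective) in the functor category, remain finitely presentable in $\sK$ and form a set of generators; local finite presentability follows, and a locally finitely presentable abelian category is automatically Grothendieck, so your sheafification/exact-reflector detour, though correct, is not needed.

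The $\FP_\infty$ step, however, contains a genuine gap. A left resolution $\cdots\to\rho(F_1)\to\rho(F_0)\to\rho(E)\to0$ by representables does \emph{not} compute $\Ext^\bu_\sK(\rho(E),K)$ as the cohomology of $K(F_\bu)=\Hom_\sK(\rho(F_\bu),K)$: the objects $\rho(F)$ are projective in $\Fun_\boZ(\sE^\sop,\Ab)$ but not in $\sK$, nor are they $\Hom_\sK({-},K)$-acyclic. By Proposition~\ref{canonical-embedding-properties}(b,c), admissible extensions in $\sE$ produce nonzero classes in $\Ext^1_\sK(\rho(F),\rho(F'))$ in general (for instance, for $\sE=\modr R$ over a right coherent ring one has $\sK\simeq\Modr R$ and these are the usual module Ext groups). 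A symptom of the failure: the identity of $E$ is an admissible epimorphism, so your recipe admits the ``resolution'' $0\to\rho(E)\to\rho(E)\to0$, which would force $\Ext^{\ge1}_\sK(\rho(E),{-})=0$ for every $E$, i.e.\ projectivity of all representables. What the short exact sequences $0\to\rho(E_1)\to\rho(F_0)\to\rho(E)\to0$ with all three terms representable actually provide --- and this is where Proposition~\ref{canonical-embedding-properties}(d,e) and the reduction to weakly idempotent-complete $\sE$ are genuinely used --- is that the set of finitely presentable generators $\rho(\sE)$ is closed under kernels of epimorphisms in $\sK$. From this one deduces that each $\rho(E)$ is of type $\FP_\infty$ by an inductive dimension-shifting argument with the long exact sequence of $\Ext_\sK$ (proving simultaneously, by induction on~$n$, that the comparison map for $\Ext^n$ is an isomorphism and for $\Ext^{n+1}$ a monomorphism, for all objects of the class at once), not by reading off the cohomology of a non-projective resolution. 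The paper delegates precisely this induction to~\cite[Lemma~3.4]{Pgen}; your shortcut replaces it with an invalid step, and you also misplace the difficulty, since the existence of the resolution is the easy part.
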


\begin{proof}
 By construction, $\sK$ is a full subcategory in the category
$\Fun_\boZ(\sE^\sop,\Ab)$ of all contravariant additive functors
$\sE^\sop\rarrow\Ab$.
 Specifically, the full subcategory $\sK\subset\Fun_\boZ(\sE^\sop,\Ab)$
consists of all the left exact functors.
 Since directed colimits of left exact sequences of abelian groups
are left exact, the full subcategory $\sK$ is closed under directed
colimits in $\Fun_\boZ(\sE^\sop,\Ab)$.
 The objects $\rho(E)$, \,$E\in\sE$ are finitely presentable (in fact,
finitely generated projective) in $\Fun_\boZ(\sE^\sop,\Ab)$, and it
follows that these objects are also finitely presentable in~$\sK$.

 The objects $\rho(E)$ form a set of generators of
$\Fun_\boZ(\sE^\sop,\Ab)$, hence they also form a set of generators
of~$\sK$.
 By~\cite[Theorem~1.11]{AR}, the category $\sK$ is locally finitely
presentable.
 The assertion that the category $\sK$ is Grothendieck can be found
in~\cite[Proposition~A.7.13]{TT}; but in fact any locally finitely
presentable abelian category is
Grothendieck~\cite[Proposition~1.59]{AR}.

 It remains to show that the objects $\rho(E)$ are of type $\FP_\infty$
in~$\sK$.
 Replacing the category $\sE$ with its (weak) idempotent completion
does not change the category $\sK$, so without loss of generality we
can assume $\sE$ to be weakly idempotent-complete.
 Then Proposition~\ref{canonical-embedding-properties}(e) is
applicable, and it remains to refer to~\cite[Lemma~3.5]{Pgen}.
\end{proof}

\begin{thm} \label{canonical-embedding-theorem}
 Let\/ $\sC$ be a locally coherent exact category and\/ $\sC_\fp\subset
\sC$ be its exact subcategory of finitely presentable objects.
 Put\/ $\sE=\sC_\fp$, and consider the canonical embedding $\rho\:\sE
\rarrow\sK$ of the exact category\/ $\sE$ into the abelian category\/
$\sK$ of left exact functors\/ $\sE^\sop\rarrow\Ab$.
 Then the full subcategory\/ $\varinjlim\rho(\sE)\subset\sK$ is
closed under coproducts, directed colimits, extensions, and kernels
of epimorphisms in the abelian category\/~$\sK$.
 Endowed with the exact category structure inherited from
the abelian category\/ $\sK$, the category\/ $\varinjlim\rho(\sE)$
is naturally equivalent to the exact category\/~$\sC$.
\end{thm}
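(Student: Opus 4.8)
I would begin by identifying $\sC$ with the category $\Ind\sE$ of ind-objects via the unique locally coherent exact structure of Corollary~\ref{ind-objects-loc-coh-exact-structure}: since $\sC$ is locally coherent, $\sC_\fp$ is idempotent-complete (by Proposition~\ref{finitely-presentables-extension-closed} and the idempotent-completeness of $\sC_\fp$), so $\sE=\sC_\fp$ is weakly idempotent-complete, and $\sC\simeq\Ind\sE$ as additive categories with the induced locally coherent exact structure. The functor $\rho\:\sE\rarrow\sK$ then extends to a functor $\bar\rho\:\Ind\sE\rarrow\sK$ by sending a directed colimit $\varinjlim E_\xi$ to $\varinjlim\rho(E_\xi)$, computed in $\sK$; this is well-defined because $\sK$ has directed colimits and the $\rho(E)$ are finitely presentable (in fact of type $\FP_\infty$) in $\sK$ by Proposition~\ref{locally-FP-infty-prop}. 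I would check $\bar\rho$ is fully faithful: on objects of $\sE$ this is Proposition~\ref{canonical-embedding-properties}(a), and the general case follows because $\Hom_{\Ind\sE}(\varinjlim E_i,\varinjlim F_j)=\varinjlim_i\varprojlim\ldots$ matches $\Hom_\sK(\varinjlim\rho(E_i),\varinjlim\rho(F_j))$ using that the $\rho(E_i)$ are finitely presentable in $\sK$ and $\rho$ is fully faithful on $\sE$. Thus the essential image $\varinjlim\rho(\sE)$ is (equivalent to) $\sC$ as an additive category.

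Next I would establish the four closure properties of $\varinjlim\rho(\sE)$ in $\sK$. Closure under directed colimits is essentially tautological from the definition of $\varinjlim\rho(\sE)$ together with the fact that a directed colimit of directed colimits of representables is again such (one reindexes using that $\rho(E)$ are finitely presentable in $\sK$). Closure under coproducts follows since a coproduct is a directed colimit of finite subcoproducts, and finite coproducts of representables are representable ($\sE$ is additive). For closure under extensions: given $0\rarrow K'\rarrow K\rarrow K''\rarrow 0$ in $\sK$ with $K',K''\in\varinjlim\rho(\sE)$, I would write $K''=\varinjlim\rho(E''_\xi)$, pull back along each $\rho(E''_\xi)\rarrow K''$, and reduce (using that $\rho(E''_\xi)$ is finitely presentable and a limit argument on the $K'$ side) to the case $K''=\rho(E'')$ and $K'=\rho(E')$, which is Proposition~\ref{canonical-embedding-properties}(c). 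Closure under kernels of epimorphisms is handled the same way, reducing to Proposition~\ref{canonical-embedding-properties}(e) (using weak idempotent-completeness of $\sE$) by pulling the epimorphism back along maps from representables, with Proposition~\ref{canonical-embedding-properties}(d) used to lift these maps.

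Finally I would match the exact structures. The inherited exact structure on $\varinjlim\rho(\sE)$ makes sense by the closure under extensions just proven. On $\sE$ itself, the inherited exact structure agrees with the original exact structure on $\sE$ by Proposition~\ref{canonical-embedding-properties}(b). It remains to see that the inherited exact structure on $\varinjlim\rho(\sE)$ is locally coherent with $\sE$ closed under extensions: by the uniqueness in Corollary~\ref{ind-objects-loc-coh-exact-structure}, this pins it down to be exactly the locally coherent structure of $\sC$. To verify local coherence I would show every admissible short exact sequence of $\varinjlim\rho(\sE)$ — i.e.\ every short exact sequence in $\sK$ with all three terms in $\varinjlim\rho(\sE)$ — is a directed colimit of short exact sequences of representables: write the cokernel term as $\varinjlim\rho(E''_\xi)$, apply Proposition~\ref{canonical-embedding-properties}(d) to get admissible epimorphisms $F_\xi\rarrow E''_\xi$ in $\sE$, take pullbacks in $\sK$, identify the kernels as objects of $\varinjlim\rho(\sE)$ (using kernels of epimorphisms), and pass to an appropriate cofinal subdiagram; conversely, directed colimits of short exact sequences of representables are short exact in $\sK$ since directed colimits are exact in the Grothendieck category $\sK$ and the colimit stays in $\varinjlim\rho(\sE)$. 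The main obstacle I anticipate is the careful bookkeeping in these ``reduce to the representable case'' arguments — specifically, passing from a short exact sequence of ind-objects to a compatible directed system of short exact sequences of finitely presentable objects while controlling all three terms simultaneously — which is where the finite-presentability (indeed $\FP_\infty$) of the $\rho(E)$ and the exactness of directed colimits in $\sK$ must be used together; the closure and exact-structure-matching claims themselves then follow formally.
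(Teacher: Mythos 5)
Your proposal is correct and follows essentially the same route as the paper's proof: the identification $\sC\simeq\varinjlim\rho(\sE)$ via the finite presentability of the objects $\rho(E)$ in $\sK$, the four closure properties via Proposition~\ref{canonical-embedding-properties}(c,e) combined with the type $\FP_\infty$ property from Proposition~\ref{locally-FP-infty-prop}, and the matching of exact structures by exactness of $\rho$ and of directed colimits in $\sK$ in one direction, and by pulling back along maps from representables and lifting with Proposition~\ref{canonical-embedding-properties}(d) in the other. The only difference is one of packaging: the paper delegates the closure properties to the external result \cite[Proposition~3.5]{Pgen} and channels the converse direction through Lemma~\ref{admissible-epis-characterized}, whereas you sketch the underlying directed-colimit bookkeeping (which indeed requires $\Ext^1$ to commute with directed colimits, i.e.\ the $\FP_\infty$ property you flag) directly.
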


\begin{proof}
 To establish the closure properties of the full subcategory
$\varinjlim\rho(\sE)\subset\sK$, it suffices to refer to
Proposition~\ref{canonical-embedding-properties}(c,e),
Proposition~\ref{locally-FP-infty-prop},
and~\cite[Proposition~3.6]{Pgen}.
 By Proposition~\ref{accessible-subcategory}, the category
$\varinjlim\rho(\sE)$ is finitely accessible and $\rho(\sE)$ is its
full subcategory of finitely presentable objects; so
$\varinjlim\rho(\sE)\simeq\Ind\rho(\sE))$.
 Similarly we have $\sC\simeq\Ind(\sE)$, so the category equivalence
$\rho\:\sE\simeq\rho(\sE)$ induces a category equivalence
$\sC\simeq\varinjlim\rho(\sE)$.
 It remains to show that the inherited exact structure on
$\varinjlim\rho(\sE)\subset\sK$ agrees with the original locally
coherent exact structure on~$\sC$.

 The (admissible) short exact sequences in $\sC$ are the directed
colimits of short exact sequences in~$\sE$.
 Since the functor~$\rho$ is exact by
Proposition~\ref{canonical-embedding-properties}(b) and the directed
colimits are exact functors in $\sK$, it follows that any short
exact sequence in $\sC$ is also a short exact sequence in
$\varinjlim\rho(\sE)$.
 To prove the converse implication, we will use the characterization
of admissible epimorphisms in $\sC$ provided by
Lemma~\ref{admissible-epis-in-the-colimit-exact-structure}.

 Let $0\rarrow L\rarrow M\rarrow N\rarrow0$ be a short exact sequence
in $\sK$ with the terms $L$, $M$, $N\in\varinjlim\rho(\sE)$.
 Given an object $E\in\rho(\sE)$ and a morphism $E\rarrow N$ in $\sK$,
we need to construct a commutative square
$$
 \xymatrix{
  M \ar[r] & N \\
  F \ar@{..>}[u] \ar@{..>>}[r] & E \ar[u]
 }
$$
with an object $F\in\rho(\sE)$ and an admissible epimorphism
$F\rarrow E$ in~$\rho(\sE)$.
 Indeed, let $K$ be the pullback of the pair of morphisms $M\rarrow N$
and $E\rarrow N$ in the abelian category~$\sK$.
 Then the morphism $K\rarrow E$ is an epimorphism in $\sK$, since
the morphism $M\rarrow N$ is.
 It remains to refer to
Proposition~\ref{canonical-embedding-properties}(d).

 Now that we have shown that the morphism $M\rarrow N$ comes from
an admissible epimorphism in~$\sC$, it follows that the whole
short exact sequence $0\rarrow L\rarrow M\rarrow N\rarrow0$ comes
from a short exact sequence in~$\sC$ (since we already know that
the functor $\sC\rarrow\varinjlim\rho(\sE)\subset\sK$ is exact).
\end{proof}

\Section{Two Periodicity Theorems}

 Let $\sC$ be an idempotent-complete additive category and
$\sF\subset\sC$ be a class of objects.
 Following the notation of Section~\ref{ind-objects-secn}, we denote
by $\overline\sF\subset\sC$ the class of all direct summands of
objects from $\sF$ in~$\sC$.

 The aim of this section is to formulate and prove two periodicity
theorems for a locally coherent exact category.
 We state the first one right away, postponing the formulation of
the second one towards the end of the section.

\begin{thm} \label{first-periodicity-theorem}
 Let\/ $\sC$ be a locally coherent exact category and\/
$\sP=\overline{\Fil(\sC_\fp)}$ be the class of all direct summands
of objects filtered by finitely presentable objects in\/~$\sC$
(in the sense of the definition in
Section~\ref{grothendieck-type-secn}).
 Let\/ $0\rarrow C\rarrow P\rarrow C\rarrow0$ be an admissible
short exact sequence in\/ $\sC$ with $P\in\sP$.
 Then one has $C\in\sP$. 
\end{thm}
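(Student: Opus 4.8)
The plan is to transport the statement, via the canonical embedding of Section~\ref{canonical-embedding-secn}, into a statement about a cotorsion pair of finite type in a locally type $\FP_\infty$ Grothendieck abelian category, and then to run the periodicity argument of Neeman and of \v Saroch--\v St\!'ov\'\i\v cek on the acyclic complex obtained by splicing $0\rarrow C\rarrow P\rarrow C\rarrow0$.

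First I would apply Theorem~\ref{canonical-embedding-theorem} with $\sE=\sC_\fp$, realizing $\sC$ as the full subcategory $\varinjlim\rho(\sE)$ of the abelian category $\sK$ of left exact functors $\sE^\sop\rarrow\Ab$. By that theorem together with Proposition~\ref{locally-FP-infty-prop}, the subcategory $\sC\subset\sK$ is closed under coproducts, directed colimits, extensions, and kernels of epimorphisms; its exact structure is inherited from~$\sK$; the category $\sK$ is Grothendieck and locally type~$\FP_\infty$; and the objects of $\sC_\fp=\rho(\sE)$ are of type~$\FP_\infty$ in $\sK$ and form a set of generators. Consequently $\Ext^1_\sC(X,Y)=\Ext^1_\sK(X,Y)$ for $X,Y\in\sC$ (since $\sC$ is extension-closed in $\sK$), directed colimits in $\sC$ are computed in $\sK$, and for $S\in\sC_\fp$ the functor $\Ext^1_\sK(S,-)$ commutes with directed colimits.

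Next I would identify $\sP$ with the left-hand class of a complete cotorsion pair. The exact category $\sC$ is of Grothendieck type (Corollary~\ref{loc-coh-are-grothendieck-type-cor}), and $\sC_\fp$ is an essentially small generating class in it: every object of $\sC$ is a pure epimorphic image of a coproduct of objects of $\sC_\fp$ (cf.\ the proof of Proposition~\ref{efficient-prop}), and such a coproduct lies in $\Fil(\sC_\fp)$. Hence, by the theory of deconstructible classes (the Eklof and Hill lemmas) and the completeness of cotorsion pairs generated by sets in exact categories of Grothendieck type, the cotorsion pair generated by the set $\sC_\fp$ is complete with left-hand class $\overline{\Fil(\sC_\fp)}=\sP$ and right-hand class $\sB:=\sC_\fp^{\perp_1}=\sP^{\perp_1}$; moreover $\sB$ is closed under directed colimits by the previous paragraph, so $(\sP,\sB)$ is a cotorsion pair of finite type. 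It therefore suffices to prove that $\Ext^1_\sC(C,B)=0$ for every $B\in\sB$, for then $C\in{}^{\perp_1}\sB=\sP$.

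For the last step I would splice the given sequence: writing $\iota\:C\rarrow P$ and $\pi\:P\rarrow C$, the composites $\iota\pi\:P\rarrow P$ assemble into a doubly infinite complex $\mathbf P$ in $\sC$ which is acyclic in $\sK$ with all cocycles isomorphic to~$C$. A direct computation, using $\Ext^1_\sK(P,B)=0$, shows that for fixed $B\in\sB$ the group $\Ext^1_\sK(C,B)$ is isomorphic to the cohomology of $\Hom_\sK(\mathbf P,B)$ in every degree, so the claim reduces to the acyclicity of $\Hom_\sK(\mathbf P,B)$. This is the heart of the matter and, I expect, the main obstacle. The route I would take, following Neeman's flat/projective and \v Saroch--\v St\!'ov\'\i\v cek's fp\+projective periodicity arguments, is: present $\mathbf P$ as a directed colimit $\mathbf P=\varinjlim_j\mathbf R_j$ of acyclic complexes $\mathbf R_j$ of finitely presentable objects of $\sC$ (possible because $\sC$ is finitely accessible with exact directed colimits, so the cocycles of $\mathbf R_j$ are finitely presentable by Proposition~\ref{finitely-presentables-adm-co-kernel-closed}); note that each $\Hom_\sK(\mathbf R_j,B)$ is acyclic, since the finitely presentable cocycles of $\mathbf R_j$ are right orthogonal to $B\in\sC_\fp^{\perp_1}$; and then use the periodicity of $\mathbf P$ together with the definability of the finite type class $\sB$ to control the $\varprojlim^1$\+obstruction to the acyclicity of $\Hom_\sK(\mathbf P,B)=\varprojlim_j\Hom_\sK(\mathbf R_j,B)$ and conclude that it vanishes. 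Once $\Ext^1_\sC(C,B)=0$ for all $B\in\sB$, we get $C\in\sP$; specializing $\sC$ to the pure exact category $\Modrfl R$ recovers the Benson--Goodearl and Neeman theorem, and specializing $\sC$ to $\Modr R$ with the maximal locally coherent exact structure recovers the fp\+projective periodicity of \v Saroch--\v St\!'ov\'\i\v cek (Theorem~\ref{mlc-periodicity-theorem}).
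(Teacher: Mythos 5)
Your setup coincides with the paper's: the canonical embedding $\sC\simeq\varinjlim\rho(\sE)\subset\sK$ from Theorem~\ref{canonical-embedding-theorem} together with Proposition~\ref{locally-FP-infty-prop}, the identification of $\sP=\overline{\Fil(\sC_\fp)}$ as the left-hand class of a complete hereditary cotorsion pair $(\sP,\sB)$ with $\sB=(\sC_\fp)^{\perp_1}\cap\sC$, and the reduction to $\Ext^1(C,B)=0$ for all $B\in\sB$ are all present in the paper's proof. The divergence, and the genuine gap, is in your final step. The paper does not argue on the spliced complex at all; it invokes \cite[Theorem~A and Corollary~7.3]{Pgen}, whose proof runs through the Hill lemma in the ambient locally finitely presentable Grothendieck category $\sK$ --- which, as the Remark following the paper's proof explains, is the very reason the embedding into $\sK$ is needed. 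Your substitute for that citation, namely writing $\mathbf P=\varinjlim_j\mathbf R_j$ and ``controlling the $\varprojlim^1$\+obstruction'' to the acyclicity of $\varprojlim_j\Hom_\sK(\mathbf R_j,B)$, is not a proof and would not become one as stated: the directed poset indexing such a presentation is uncountable in general, so the obstruction to exactness of the inverse limit involves all the higher derived functors $\varprojlim^n$ and admits no Milnor-sequence or Mittag-Leffler control; neither the periodicity of $\mathbf P$ nor the definability of $\sB$ helps here. Definability is exactly the tool for proving membership in the \emph{right-hand} class of a cotorsion pair (as in Theorem~\ref{second-periodicity-theorem}, where the relevant class is closed under the directed colimits one takes); membership in the left-hand class ${}^{\perp_1}\sB$ is precisely the situation where the naive colimit argument breaks down, which is why Neeman, \v Saroch--\v St\!'ov\'\i\v cek, and \cite{Pgen} all resort to genuinely different devices (homotopy-category decompositions, tree modules, and the Hill lemma, respectively).

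A secondary, repairable issue: the presentation $\mathbf P=\varinjlim_j\mathbf R_j$ by acyclic complexes of finitely presentable objects is itself not automatic. Proposition~\ref{nonadditive-functors-prop} and its relatives describe finitely presentable objects of diagram categories only over \emph{finite} diagrams; for $\boZ$\+indexed complexes the finitely presentable objects are bounded complexes, and upgrading the colimit presentations of the individual admissible short exact sequences to a single directed presentation of the doubly infinite acyclic complex requires an additional accessibility argument. That part can be fixed, but the $\varprojlim$ step cannot; to complete the proof you should replace it by the deconstructibility and Hill-lemma machinery of~\cite{Pgen}, applied inside $\sK$ as the paper does.
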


\begin{exs} \label{first-periodicity-examples}
 (1)~Let $R$ be a ring and $\sC=\Modrfl R$ be the category of flat
right $R$\+modules.
 Then $\sC$ is a finitely accessible category, and the finitely
presentable objects of $\sC$ are the finitely generated projective
right $R$\+modules.
 Endowed with the exact structure inherited from the ambient abelian
category of modules $\Modr R$, the category $\sC$ becomes a locally
coherent exact category with the pure exact structure (see
Example~\ref{flat-modules-example}).

 In this context, the class $\sP=\overline{\Fil(\sC_\fp)}\subset\sC$
is the class of all projective $R$\+modules.
 Theorem~\ref{first-periodicity-theorem} claims that, for any short
exact sequence of $R$\+modules $0\rarrow C\rarrow P\rarrow C\rarrow0$
with a flat $R$\+module $C$ and a projective $R$\+module $P$,
the $R$\+module $C$ is also projective.
 This result is due to Benson and Goodearl~\cite[Theorem~2.5]{BG},
see also Neeman~\cite[Remark~2.15 and Theorem~8.6]{Neem}.

\smallskip
 (2)~Let $R$ be a ring and $\sC=\Modr R$ be the abelian category of
right $R$\+modules.
 Then $\sC$ is a locally finitely presentable category, and
the finitely presentable objects of $\sC$ are the cokernels of
morphisms of finitely generated free $R$\+modules.
 Endowed with the maximal locally coherent exact structure as per
Section~\ref{example-maximal-subsecn}, \,$\sC$ becomes a locally
coherent exact category.
 Assume additionally that the ring $R$ is right coherent; then
$\sC$ is a locally coherent abelian category, so the maximal
locally coherent exact structure on $\sC$ coincides with the abelian
exact structure by Corollary~\ref{locally-coherent-abelian-corollary}.

 In this context, the class $\sP=\overline{\Fil(\sC_\fp)}\subset\sC$
is known as the class of all \emph{fp\+projective} $R$\+modules.
 Theorem~\ref{first-periodicity-theorem} claims that, for any short
exact sequence of $R$\+modules $0\rarrow C\rarrow P\rarrow C\rarrow0$
with an fp\+projective $R$\+module $P$, the $R$\+module $C$ is also
fp\+projective.
 This result is due to \v Saroch and
\v St\!'ov\'\i\v cek~\cite[Example~4.3]{SarSt},
see also~\cite[Theorem~0.7 or Corollary~4.9]{BHP}.

 For an extension of example~(2) to arbitrary rings~$R$, see
Example~\ref{mlc-periodicity-module-example} below.
\end{exs}

 Let $\sK$ be an exact category, and let $\sA$, $\sB\subset\sK$ be two
classes of objects.
 Then $\sA^{\perp_1}\subset\sK$ denotes the class of all objects
$X\in\sK$ such that $\Ext_\sK^1(A,X)=0$ for all $A\in\sA$.
 Similarly, $\sA^{\perp_{\ge1}}\subset\sK$ is the class of all objects
$X\in\sK$ such that $\Ext_\sK^n(A,X)=0$ for all $A\in\sA$ and $n\ge1$.
 Dually, ${}^{\perp_1}\sB\subset\sK$ is the class of all objects
$Y\in\sK$ such that $\Ext_\sK^1(Y,B)=0$ for all $B\in\sB$, while
${}^{\perp_{\ge1}}\sB\subset\sK$ is the class of all objects
$Y\in\sK$ such that $\Ext_\sK^n(Y,B)=0$ for all $B\in\sB$ and $n\ge1$.

 The following classical result is known as the \emph{Eklof lemma}.

\begin{lem} \label{eklof-lemma}
 For any exact category\/ $\sK$ and any class of objects\/ $\sB
\subset\sK$, all objects filtered by objects from\/ ${}^{\perp_1}\sB
\subset\sK$ belong to\/ ${}^{\perp_1}\sB$, that is\/
$\Fil({}^{\perp_1}\sB)={}^{\perp_1}\sB$.
\end{lem}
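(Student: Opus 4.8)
The plan is to prove the two inclusions of $\Fil({}^{\perp_1}\sB)={}^{\perp_1}\sB$ separately. The inclusion ${}^{\perp_1}\sB\subseteq\Fil({}^{\perp_1}\sB)$ is trivial, since every object $F$ is filtered by itself via the two-term filtration $0\rarrow F$. For the reverse inclusion one may first pass to the idempotent completion of $\sK$: this affects neither the $\Ext^1$ groups between objects of $\sK$ nor the notion of filtration (the category is closed under extensions in its completion, as in Lemma~\ref{idempotent-completion-of-exact-category}), so there is no loss in assuming $\sK$ weakly idempotent-complete and hence having the full pullback/pushout calculus of~\cite{Bueh} available. Fix $B\in\sB$ and an object $F=F_\alpha$ equipped with an $\alpha$\+indexed filtration $(F_\beta)_{0\le\beta\le\alpha}$ whose successive cokernels $U_\beta=\coker(F_\beta\to F_{\beta+1})$ lie in ${}^{\perp_1}\sB$. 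It is enough to show that every admissible short exact sequence $0\rarrow B\rarrow X\overset p\rarrow F\rarrow0$ splits, for this yields $\Ext^1_\sK(F,B)=0$, and therefore $F\in{}^{\perp_1}\sB$.

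For each $\beta\le\alpha$ let $X_\beta$ be the pullback of $p$ along the canonical morphism $F_\beta\rarrow F$; since pullbacks of admissible epimorphisms exist and are admissible epimorphisms, one obtains admissible short exact sequences $0\rarrow B\rarrow X_\beta\rarrow F_\beta\rarrow0$, with $X_\alpha=X$, together with compatible morphisms $X_\beta\rarrow X$ over $F_\beta\rarrow F$. I would then build, by transfinite recursion, compatible morphisms $s_\beta\:F_\beta\rarrow X$ with $ps_\beta$ equal to the canonical morphism $F_\beta\rarrow F$; the morphism $s_\alpha$ will then split~$p$. Take $s_0=0$. At a limit ordinal~$\gamma$, smoothness gives $F_\gamma=\varinjlim_{\beta<\gamma}F_\beta$, so the compatible family $(s_\beta)_{\beta<\gamma}$ induces $s_\gamma$ — this step involves only morphisms out of a colimit, so no completeness hypothesis on $\sK$ is required. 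The successor step rests on the following sublemma: \emph{given an admissible short exact sequence $0\rarrow F'\overset m\rarrow F''\rarrow U\rarrow0$ with $U\in{}^{\perp_1}\sB$, an admissible short exact sequence $0\rarrow B\rarrow Y\overset q\rarrow F''\rarrow0$ with $B\in\sB$, and a morphism $s'\:F'\rarrow Y$ with $qs'=m$, there is a morphism $s''\:F''\rarrow Y$ with $qs''=\id_{F''}$ and $s''m=s'$.} Granting the sublemma, in the successor step one lifts $s_\beta$ through the pullback $X_{\beta+1}$ to a morphism $F_\beta\rarrow X_{\beta+1}$ lying over $F_\beta\rarrow F_{\beta+1}$, applies the sublemma to $0\rarrow F_\beta\rarrow F_{\beta+1}\rarrow U_\beta\rarrow0$ and $0\rarrow B\rarrow X_{\beta+1}\rarrow F_{\beta+1}\rarrow0$, and composes the resulting section with $X_{\beta+1}\rarrow X$ to define $s_{\beta+1}$; the compatibility $s_{\beta+1}m_\beta=s_\beta$ and the relation $ps_{\beta+1}=(F_{\beta+1}\to F)$ then follow formally.

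To prove the sublemma, form the pushout of $m$ along $s'$, producing an admissible monomorphism $m'\:Y\rarrow Z$ with cokernel $q_Z\:Z\rarrow U$ and pushout morphism $s_0\:F''\rarrow Z$ (so $s_0m=m's'$ and $q_Zs_0=w$, with $w\:F''\rarrow U$ the cokernel of~$m$); as $q$ and $\id_{F''}$ agree after precomposition with~$m$, the pushout also delivers a retraction $r\:Z\rarrow F''$ of $s_0$ with $rm'=q$, so $r$ is a split epimorphism. A diagram chase with admissible subobjects — here weak idempotent-completeness is used — identifies $\ker r$ as sitting in an admissible short exact sequence $0\rarrow B\rarrow\ker r\overset a\rarrow U\rarrow0$ with $a$ induced by $q_Z$ (one checks $\ker r\cap\ker q_Z\cong B$ and that $\ker r$ together with $\ker q_Z=m'(Y)$ covers $Z$, whence $\ker r/B\cong Z/m'(Y)=U$). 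Because $B\in\sB$ and $U\in{}^{\perp_1}\sB$, this sequence splits; choosing a section $\varepsilon\:U\rarrow\ker r$ of~$a$ and putting $\delta=\iota\varepsilon w\:F''\rarrow Z$ (where $\iota\:\ker r\rarrow Z$ is the inclusion), one verifies $r\delta=0$, $q_Z\delta=w$, $\delta m=0$, so that $s_1:=s_0-\delta$ satisfies $rs_1=\id_{F''}$, $q_Zs_1=0$, $s_1m=m's'$. The vanishing $q_Zs_1=0$ forces $s_1$ to factor as $m's''$ for a unique $s''\:F''\rarrow Y$, and the other two relations translate into $qs''=\id_{F''}$ and $s''m=s'$. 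The main obstacle is precisely this sublemma, and within it the identification of $\ker r$ as an extension of $U$ by~$B$: this is the only point where the hypothesis $U\in{}^{\perp_1}\sB$ is brought to bear, and it requires the reduction to the weakly idempotent-complete setting together with the standard lemmas on admissible subobjects in exact categories. Everything else is formal.
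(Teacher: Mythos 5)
Your argument is correct, and it is essentially the standard proof of the Eklof lemma that the paper does not spell out but delegates to the references \cite{PR}, \cite{Sto-ICRA}, \cite{BHP}: reduce to showing that every admissible short exact sequence $0\rarrow B\rarrow X\rarrow F\rarrow 0$ splits, and build a splitting by transfinite induction along the filtration, extending a partial section over $F_\beta$ to one over $F_{\beta+1}$ by means of the vanishing of $\Ext^1_\sK(U_\beta,B)$. Your reduction to the weakly idempotent-complete case is legitimate (the idempotent completion changes neither the $\Ext^1$ groups between objects of $\sK$ nor the filtrations, and $\kappa$ here plays no role), and your sublemma is the correct obstruction-theoretic heart of the matter. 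The only step you leave as a ``diagram chase'' --- that the kernels of the termwise split/admissible epimorphism $(q,r,0)$ from $0\rarrow Y\rarrow Z\rarrow U\rarrow 0$ onto $0\rarrow F''\rarrow F''\rarrow 0\rarrow 0$ form a short exact sequence $0\rarrow B\rarrow\ker r\rarrow U\rarrow 0$ --- is exactly \cite[Corollary~3.6]{Bueh}, already invoked elsewhere in the paper, so no ad hoc chase with admissible subobjects is needed. Alternatively, the whole sublemma can be extracted from the contravariant long exact sequence $\Hom_\sK(F'',B)\rarrow\Hom_\sK(F',B)\rarrow\Ext^1_\sK(U,B)\rarrow\Ext^1_\sK(F'',B)\rarrow\Ext^1_\sK(F',B)$ associated with $0\rarrow F'\rarrow F''\rarrow U\rarrow 0$, which both splits the sequence over $F''$ and lets one correct the splitting to agree with the given partial section; this buys a slightly shorter write-up at the cost of quoting the $\Ext$-machinery for exact categories. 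Either way, the proposal is a complete and correct substitute for the citation given in the paper.
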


\begin{proof}
 The argument from~\cite[Lemma~4.5]{PR} applies;
see also~\cite[Proposition~5.7]{Sto-ICRA} for an earlier approach
and~\cite[Lemma~1.1]{BHP} for further references.
\end{proof}

 A pair of classes of objects $(\sA,\sB)$ in an exact category $\sK$
is said to be a \emph{cotorsion pair} if $\sB=\sA^{\perp_1}$ and
$\sA={}^{\perp_1}\sB$.
 A cotorsion pair $(\sA,\sB)$ in $\sK$ is said to be \emph{complete}
if, for every object $K\in\sK$ there exist (admissible) short exact
sequences
\begin{gather*}
 0\lrarrow B'\lrarrow A\lrarrow K\lrarrow0 \\
 0\lrarrow K\lrarrow B\lrarrow A'\lrarrow0
\end{gather*}
in $\sK$ with $A$, $A'\in\sA$ and $B$, $B'\in\sB$.

 A class of objects $\sA\subset\sK$ is said to be \emph{generating}
if every object $K\in\sK$ is the codomain of an admissible epimorphism
$A\rarrow K$ with $A\in\sA$.
 Dually, a class of objects $\sB\subset\sK$ is said to be
\emph{cogenerating} if every object $K\in\sK$ is the domain of
an admissible monomorphism $K\rarrow B$ with $B\in\sB$.
 Notice that in any complete cotorsion pair $(\sA,\sB)$ the class
$\sA$ is generating and the class $\sB$ is cogenerating.
 The following lemma is standard.

\begin{lem} \label{hereditary-lemma}
 Let $(\sA,\sB)\subset\sK$ be a cotorsion pair such that the class
$\sA$ is generating and the class $\sB$ is cogenerating.
 Then the following conditions are equivalent:
\begin{enumerate}
\item the class\/ $\sA$ is closed under kernels of admissible
epimorphisms in\/~$\sK$;
\item the class\/ $\sB$ is closed under cokernels of admissible
monomorphisms in\/~$\sK$;
\item $\Ext^2_\sK(A,B)=0$ for all $A\in\sA$ and $B\in\sB$;
\item $\Ext^n_\sK(A,B)=0$ for all $A\in\sA$, \,$B\in\sB$, and $n\ge1$.
\end{enumerate}
\end{lem}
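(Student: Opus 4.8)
The plan is to prove the implications $(1)\Rightarrow(4)$, $(2)\Rightarrow(4)$, $(4)\Rightarrow(3)$, $(3)\Rightarrow(1)$, and $(3)\Rightarrow(2)$; together these give the full equivalence (for instance $(1)\Rightarrow(4)\Rightarrow(3)\Rightarrow(1)$, while $(2)\Rightarrow(4)$ and $(3)\Rightarrow(2)$ fold $(2)$ into the cycle). Throughout I~work with the Yoneda $\Ext$ groups of the exact category $\sK$, using three standard facts: the long exact sequences of $\Ext^\bullet_\sK(-,B)$ and $\Ext^\bullet_\sK(A,-)$ attached to an admissible short exact sequence; the Yoneda splicing products $\Ext^m_\sK(Y,Z)\times\Ext^n_\sK(X,Y)\rarrow\Ext^{m+n}_\sK(X,Z)$; and the compatibility $\varphi_*(\alpha)\circ\beta=\alpha\circ\varphi^*(\beta)$ for a morphism $\varphi\colon Y\rarrow Y'$ and classes $\alpha\in\Ext^n_\sK(X,Y)$, $\beta\in\Ext^m_\sK(Y',Z)$.

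The implication $(4)\Rightarrow(3)$ is the case $n=2$. For $(3)\Rightarrow(1)$: let $D\rarrow E$ be an admissible epimorphism with $D$, $E\in\sA$ and kernel~$K$; for any $B\in\sB$ the long exact sequence of $\Ext^\bullet_\sK(-,B)$ contains $\Ext^1_\sK(D,B)\rarrow\Ext^1_\sK(K,B)\rarrow\Ext^2_\sK(E,B)$, whose outer terms vanish (the left one since $(\sA,\sB)$ is a cotorsion pair and $D\in\sA$, the right one by~(3)), so $\Ext^1_\sK(K,B)=0$; as $B\in\sB$ was arbitrary, $K\in{}^{\perp_1}\sB=\sA$. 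The implication $(3)\Rightarrow(2)$ is dual, via the long exact sequence of $\Ext^\bullet_\sK(A,-)$.

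The core of the argument is $(1)\Rightarrow(4)$ (and, dually, $(2)\Rightarrow(4)$), which I~prove by induction on~$n$, the claim being $\Ext^n_\sK(A,B)=0$ for all $A\in\sA$, $B\in\sB$, $n\ge1$; the base case $n=1$ is the cotorsion-pair condition. For $n\ge2$, represent a given class by an admissible $n$\+extension and split off its rightmost short exact sequence $\xi_0\colon 0\rarrow C\rarrow E_0\rarrow A\rarrow0$, so that the class equals $\zeta\circ\xi_0$ with $\zeta\in\Ext^{n-1}_\sK(C,B)$. Since $\sA$ is generating, choose an admissible epimorphism $A'\rarrow E_0$ with $A'\in\sA$; then the composite $A'\rarrow E_0\rarrow A$ is an admissible epimorphism between objects of~$\sA$, so by~(1) its kernel~$K$ lies in~$\sA$, and the evident morphism of short exact sequences from $\eta_0\colon 0\rarrow K\rarrow A'\rarrow A\rarrow0$ to~$\xi_0$ (the identity on~$A$) exhibits $\xi_0$ as the pushforward $\varphi_*\eta_0$ along the induced map $\varphi\colon K\rarrow C$. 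By the compatibility of Yoneda products the class equals $\zeta\circ\varphi_*\eta_0=(\varphi^*\zeta)\circ\eta_0$, and $\varphi^*\zeta\in\Ext^{n-1}_\sK(K,B)$ vanishes by the inductive hypothesis (or, for $n=2$, by the cotorsion-pair condition), since $K\in\sA$; hence the class is zero. The implication $(2)\Rightarrow(4)$ is the mirror argument: split off the leftmost short exact sequence, use that $\sB$ is cogenerating together with~(2) to realize it as a pullback along a map $C\rarrow L$ with $L\in\sB$, and push that pullback through the Yoneda product.

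I~expect the main difficulty to lie precisely in $(1)\Rightarrow(4)$ --- more exactly, in recognizing that it can be carried out with no ambient supply of projective or injective objects. A~naive dimension shift along an $\sA$\+resolution $\cdots\rarrow A_1\rarrow A_0\rarrow A$ stalls: although~(1) returns the syzygies to~$\sA$, the terms $A_i$ need not be acyclic for $\Ext^{\ge2}_\sK(-,B)$, so one only obtains embeddings $\Ext^n_\sK(A,B)\hookrightarrow\Ext^n_\sK(A_0,B)$ in place of vanishing; the bookkeeping with Yoneda products above is what sidesteps this. A~minor point to keep track of is that every short exact sequence produced along the way --- pullbacks, pushouts, kernels of composites of admissible epimorphisms --- remains admissible in~$\sK$, which is immediate from the exact-category axioms.
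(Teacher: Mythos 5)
Your proof is correct. The paper itself gives no argument for this lemma --- it simply cites \cite[Lemma~6.17]{Sto-ICRA} --- and your self-contained proof is essentially the standard one found there: the easy implications $(3)\Rightarrow(1)$, $(3)\Rightarrow(2)$ via the three-term piece of the long exact sequence, and the substantive implications $(1)\Rightarrow(4)$, $(2)\Rightarrow(4)$ by splitting off the outermost short exact sequence of a Yoneda $n$\+extension and using the generating (resp.\ cogenerating) hypothesis together with closure under kernels (resp.\ cokernels) to rewrite the class through an object of $\sA$ (resp.\ $\sB$). Your closing remark correctly identifies why a naive dimension shift does not suffice and why the Yoneda-product bookkeeping is the right substitute in the absence of projectives or injectives.
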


\begin{proof}
 See, e.~g., \cite[Lemma~6.17]{Sto-ICRA}.
\end{proof}

 A cotorsion pair $(\sA,\sB)$ satisfying the equivalent conditions
of Lemma~\ref{hereditary-lemma} is said to be \emph{hereditary}.

\begin{proof}[Proof of Theorem~\ref{first-periodicity-theorem}]
 Consider the exact category $\sE=\sC_\fp$, and let $\rho\:\sE
\rarrow\sK$ be its canonical embedding into an abelian category~$\sK$.
 According to Proposition~\ref{locally-FP-infty-prop}, \,$\sK$ is
a locally finitely presentable Grothendieck category, and the full
subcategory $\rho(\sE)\subset\sK$ consists of finitely presentable
objects.
 Following Theorem~\ref{canonical-embedding-theorem}, \,$\sC\simeq
\varinjlim\rho(\sE)$ is naturally a full subcategory in $\sK$,
and the locally coherent exact structure on $\sC$ is inherited from
the abelian exact structure on~$\sK$.

 We will obtain the assertion of
Theorem~\ref{first-periodicity-theorem} as a (rather special)
particular case of~\cite[Theorem~A in Section~0.2 or
Corollary~7.3]{Pgen} for $\sS=\sE=\sC_\fp\subset\sK$ and
$\kappa=\aleph_0$.
 We have $\sC=\varinjlim\sS\subset\sK$, and the class $\sC$ is
deconstructible in $\sK$ by~\cite[Proposition~A(i\+-ii)]{Pgen} together
with Proposition~\ref{locally-FP-infty-prop} above, or alternatively,
because $\sC$ is deconstructible in itself by
Corollary~\ref{loc-coh-are-grothendieck-type-cor} and closed under
directed colimits and extensions in $\sK$ by
Theorem~\ref{canonical-embedding-theorem}.
 Comparing the notation in Theorem~\ref{first-periodicity-theorem}
with the one in~\cite[Corollary~7.3]{Pgen}, we have $\sA=\sP$.

 Following~\cite[Theorem~5.16]{Sto-ICRA} or~\cite[Theorem~4.3]{Pgen},
we have a complete cotorsion pair $(\sA,\sB)$ in~$\sC$.
 By~\cite[Lemma~6.1]{PS6} and
Proposition~\ref{canonical-embedding-properties}(d,e), the cotorsion
pair $(\sA,\sB)$ is hereditary in~$\sC$.
 Alternatively, one can construct a complete cotorsion pair
$(\sA,\sA^{\perp_1})$ in $\sK$, observe that it is hereditary
by~\cite[Lemma~1.3]{BHP} with 
Proposition~\ref{canonical-embedding-properties}(d,e), and restrict it
to a hereditary complete cotorsion pair $(\sA,\>\sB=
\sA^{\perp_1}\cap\sC)$ in $\sC$ using~\cite[Lemma~2.2]{Pgen}.
 Notice that $\sA^{\perp_1}=\sS^{\perp_1}$ by Lemma~\ref{eklof-lemma},
and that the class $\sC$ is closed under the kernels of epimorphisms
in $\sK$ by Theorem~\ref{canonical-embedding-theorem}.

 The assertion that $(\sA,\sB)$ is a hereditary cotorsion pair in $\sC$
implies $\sA'=\sA$ and $\sB'=\sB$ in the notation
of~\cite[Corollary~7.3]{Pgen}.
 Applying~\cite[Corollary~7.3]{Pgen}, we obtain the desired assertion
of Theorem~\ref{first-periodicity-theorem}.
\end{proof}

\begin{rem}
 One can observe that the main argument in the proof
of~\cite[proof of Theorem~A or Theorem~7.1]{Pgen} happens
\emph{within} the exact category $\sC$, so it may appear that
the construction of the embedding into a locally finitely presentable
abelian category $\sK$ is unnecessary for the proof above.
 The problem is, however, that the Hill lemma~\cite[Theorem~2.1]{Sto0}
for the countable cardinal $\kappa=\aleph_0$ is only known for
locally finitely presentable Grothendieck categories, and \emph{not}
for exact categories of Grothendieck type.
 The Hill lemma, restated as~\cite[Proposition~2.6 or~6.7]{Pgen},
plays a key role in the proof of~\cite[Theorem~2.9(b) or~7.1(b)]{Pgen}.
 For this reason, the results of Section~\ref{canonical-embedding-secn}
are important for our proof of Theorem~\ref{first-periodicity-theorem}.
\end{rem}

 Here is our second periodicity theorem.

\begin{thm} \label{second-periodicity-theorem}
 Let\/ $\sC$ be a locally coherent exact category and\/ $\sC\rarrow\sK$
be its natural embedding into a locally finitely presentable abelian
category described in Theorem~\ref{canonical-embedding-theorem}.
 Put\/ $\sB=(\sC_\fp)^{\perp_1}\subset\sK$ and\/ $\sD=\sC^{\perp_1}
\subset\sK$.
 Let\/ $0\rarrow B\rarrow D\rarrow B\rarrow0$ be a short exact
sequence in\/ $\sK$ with $B\in\sB$ and $D\in\sD$.
 Then one has\/ $B\in\sD$.
\end{thm}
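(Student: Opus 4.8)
The plan is to move the entire problem into the abelian category $\sK$ of Section~\ref{canonical-embedding-secn} — into which $\sC$ is fully embedded, with its exact structure inherited, by Theorem~\ref{canonical-embedding-theorem} — and there to deduce it from the periodicity of the given sequence together with a derived-inverse-limit computation, in the spirit of the cotorsion periodicity theorem of~\cite{BCE}.

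First I would record the relevant cotorsion pairs in $\sK$. Write $\sE=\sC_\fp$; replacing $\sE$ by its idempotent completion changes neither $\sK$ nor $\sC\simeq\varinjlim\rho(\sE)$, so one may assume $\sE$ weakly idempotent-complete and apply Proposition~\ref{canonical-embedding-properties}(d,e). The cotorsion pair $(\sA,\sB)$ in $\sK$ generated by the set $\rho(\sE)$, with $\sA=\overline{\Fil(\rho(\sE))}$, is complete (being generated by a set in a Grothendieck category) and hereditary by~\cite[Lemma~1.3]{BHP} (or~\cite[Lemma~4.1]{PS6}) together with Proposition~\ref{canonical-embedding-properties}(d,e); hence $\Ext^n_\sK(E,B)=0=\Ext^n_\sK(E,D)$ for all $n\ge1$ and all $E\in\sC_\fp$, using $B\in\sB$ and $D\in\sD\subseteq(\sC_\fp)^{\perp_1}=\sB$. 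On the other side, by Theorem~\ref{canonical-embedding-theorem} the full subcategory $\sC\subset\sK$ is closed under directed colimits, extensions, direct summands, and kernels of epimorphisms, and it is deconstructible in~$\sK$ (Corollary~\ref{loc-coh-are-grothendieck-type-cor}, since $\sC$\+filtrations are $\sK$\+filtrations) and contains the generating set $\rho(\sE)$; so $(\sC,\sD)$ with $\sD=\sC^{\perp_1}$ is a complete cotorsion pair, hereditary by Lemma~\ref{hereditary-lemma}, whence $\Ext^n_\sK(C,D)=0$ for all $n\ge1$ and all $C\in\sC$.

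Now fix $C\in\sC$ and write $C=\varinjlim_{i\in I}E_i$ with $E_i\in\sC_\fp$ and $I$ a directed poset. Since directed colimits are exact in the Grothendieck category $\sK$ and the higher $\Ext$-groups $\Ext^n_\sK(E_i,B)$ and $\Ext^n_\sK(E_i,D)$ vanish for $n\ge1$, the groups $\Ext^p_\sK(C,B)$ and $\Ext^p_\sK(C,D)$ are computed by the derived inverse limits $\varprojlim^p_{i\in I}\Hom_\sK(E_i,B)$ and $\varprojlim^p_{i\in I}\Hom_\sK(E_i,D)$ respectively (resolve $B$, resp.\ $D$, by injectives and use $\Hom_\sK(C,J)=\varprojlim_i\Hom_\sK(E_i,J)$ together with the hyper-$\varprojlim$ spectral sequence). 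The second of these vanishes for $p\ge1$, because $\Ext^p_\sK(C,D)=0$ for $p\ge1$; that is, the tower $(\Hom_\sK(E_i,D))_{i\in I}$ is $\varprojlim$\+acyclic. Applying $\Hom_\sK(E_i,{-})$ to the periodic sequence $0\rarrow B\overset{\iota}{\rarrow}D\overset{\pi}{\rarrow}B\rarrow0$ and using $\Ext^1_\sK(E_i,B)=0$ produces a short exact sequence of towers
\[
 0\rarrow(\Hom_\sK(E_i,B))_{i\in I}\rarrow(\Hom_\sK(E_i,D))_{i\in I}\rarrow(\Hom_\sK(E_i,B))_{i\in I}\rarrow0 ,
\]
and the long exact sequence of the functors $\varprojlim^\bullet$, together with the $\varprojlim$\+acyclicity of the middle term, yields isomorphisms $\varprojlim^p_i\Hom_\sK(E_i,B)\cong\varprojlim^{p+1}_i\Hom_\sK(E_i,B)$ for all $p\ge1$; in other words $\Ext^1_\sK(C,B)\cong\Ext^2_\sK(C,B)\cong\cdots$. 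Since $I$ is a set, $\varprojlim^p_i$ vanishes for all sufficiently large~$p$ (Mitchell: the cohomological dimension of a set-sized directed poset is finite), so all of these groups are zero. As $C\in\sC$ was arbitrary, $\Ext^1_\sK(C,B)=0$ for every $C\in\sC$, i.e.\ $B\in\sC^{\perp_1}=\sD$.

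The technical core — and the step I expect to be the most delicate — is the identification of $\Ext^\bullet_\sK(C,B)$ and $\Ext^\bullet_\sK(C,D)$ with derived inverse limits of the towers $\bigl(\Hom_\sK(E_i,{-})\bigr)_{i\in I}$: this rests on the exactness of directed colimits in $\sK$ and on the hereditariness of \emph{both} cotorsion pairs, so that the higher $\Ext$-groups of the objects $E_i\in\sC_\fp$ against $B$ and $D$ vanish — the latter being exactly where Proposition~\ref{canonical-embedding-properties}(d,e) and the existence of the canonical embedding $\sC\hookrightarrow\sK$ are used (and where the hypothesis $B\in\sB$ enters). Once this is in place, the periodicity of the sequence $0\rarrow B\rarrow D\rarrow B\rarrow0$ is precisely what turns the a~priori weaker fact ``$D\in\sD$'' into the desired ``$B\in\sD$''. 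This is the exact-category analogue of the cotorsion periodicity theorem of~\cite{BCE}; alternatively, having set up the situation as in the first two paragraphs, one can invoke the general cotorsion periodicity statement of~\cite{Pgen}, the counterpart of~\cite[Corollary~7.3]{Pgen} used in the proof of Theorem~\ref{first-periodicity-theorem}.
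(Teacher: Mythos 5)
Your proof is correct, but it takes a genuinely different route from the paper's. The paper's own proof of Theorem~\ref{second-periodicity-theorem} is essentially a two-line citation: it applies \cite[Theorem~B(ii) and Theorem~1.1]{Pgen} to $\sS=\sC_\fp\subset\sK$ and $\sT=\varnothing$, checking only (via Proposition~\ref{canonical-embedding-properties}(c,e)) that $\sC_\fp$ is closed under extensions and kernels of epimorphisms in~$\sK$. You instead unpack the substance of that cited theorem: you set up the two hereditary complete cotorsion pairs $(\sA,\sB)$ and $(\sC,\sD)$ in $\sK$, identify $\Ext^p_\sK(C,X)$ with $\varprojlim^p_i\Hom_\sK(E_i,X)$ for $C=\varinjlim_i E_i$ whenever $\Ext^{\ge1}_\sK(E_i,X)=0$, read off $\varprojlim$\+acyclicity of the tower $(\Hom_\sK(E_i,D))_i$ from $D\in\sD$, use the periodic sequence to obtain the shift isomorphisms $\Ext^p_\sK(C,B)\cong\Ext^{p+1}_\sK(C,B)$ for $p\ge1$, and kill everything with Mitchell's bound on the cohomological dimension of $\varprojlim$ over a set-indexed directed poset. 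This is precisely the strategy of \cite{BCE} and of the proof of \cite[Theorem~B]{Pgen}, so the underlying mathematics coincides with the cited source; what your version buys is a self-contained argument in which one sees exactly where each hypothesis enters ($B\in\sB$ gives the short exact sequence of towers, $D\in\sD$ gives the acyclic middle term, and Theorem~\ref{canonical-embedding-theorem} supplies the closure properties making $(\sC,\sD)$ a hereditary complete cotorsion pair), while the paper's citation is shorter and covers the more general statement with nonempty~$\sT$. Two small points worth tightening if you write this out in full: the identification of $\Ext^p_\sK(C,{-})$ with $\varprojlim^p$ is cleanest via the Roos/Bousfield--Kan resolution $\dotsb\rarrow\bigoplus_{i_0\le i_1}E_{i_0}\rarrow\bigoplus_{i_0}E_{i_0}\rarrow C\rarrow0$ (exact because directed colimits are exact in the Grothendieck category~$\sK$), whose terms are $\Hom_\sK({-},X)$\+acyclic by the hereditariness of $(\sA,\sB)$; and the equality $\sC={}^{\perp_1}(\sC^{\perp_1})$ needed for $(\sC,\sD)$ to be a cotorsion pair should be justified by noting that $\sC$ is closed under transfinite extensions and direct summands in $\sK$ and contains a generating set, so $\overline{\Fil(\sC)}=\sC$.
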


\begin{proof}
 This is a (rather special) particular case of~\cite[Theorem~B
in Section~0.3 or Theorem~1.2]{Pgen} for $\sS=\sC_\fp\subset\sK$
and $\sT=\varnothing$.
 Notice that the class $\sS=\sE=\sC_\fp$ is closed under extensions
and kernels of epimorphisms in $\sK$ by
Proposition~\ref{canonical-embedding-properties}(c,e),
so~\cite[Proposition~B]{Pgen} is applicable.
\end{proof}

\begin{ex} \label{second-periodicity-example}
 Let $R$ be a ring and $\sC=\Modrfl R$ be the category of flat
right $R$\+modules endowed with the pure exact structure, as in
Example~\ref{first-periodicity-examples}(1).
 Then $\sK=\Modr R$ is the abelian category of right $R$\+modules,
and the embedding $\sC\rarrow\sK$ from
Theorem~\ref{canonical-embedding-theorem} agrees with the identity
inclusion $\Modrfl R\rarrow\Modr R$.
 The full subcategory $\sB=(\sC_\fp)^{\perp_1}\subset\sK$ coincides
with the whole abelian module category, $\sB=\sK=\Modr R$, while
the full subcategory $\sD=\sC^{\perp_1}\subset\sK$ is the class of
all \emph{cotorsion} right $R$\+modules, $\sD=\Modrcot R$.

 Theorem~\ref{second-periodicity-theorem} claims that, for any
short exact sequence of $R$\+modules $0\rarrow B\rarrow D\rarrow B
\rarrow0$ with a cotorsion $R$\+module $D$, the $R$\+module $B$ is
also cotorsion.
 This result is due to Bazzoni, Cort\'es-Izurdiaga,
and Estrada~\cite[Theorem~1.2(2) or Proposition~4.8(2)]{BCE}.
\end{ex}

\Section{Maximal Locally Coherent Exact Structure~II}

 In Example~\ref{first-periodicity-examples}(2) we spelled out
what Theorem~\ref{first-periodicity-theorem} says in the context
of the abelian category $\sC=\Modr R$ of right modules over
a right coherent ring~$R$ (and the abelian exact structure on~$\sC$).
 The aim of this section is to extend this discussion to arbitrary
rings $R$ and the maximal locally coherent exact structure on
$\sC=\Modr R$.
 This will lead us to a new version of fp\+projective periodicity
theorem for modules over an arbitary ring, and more generally,
for objects of an arbitrary locally finitely presentable abelian
category.

 Let $\sC$ be a locally $\kappa$\+presentable Grothendieck category
(where $\kappa$~is a regular cardinal).
 Following~\cite[Remark~4.11]{BHP}, we will say that an object
$P\in\sC$ is \emph{$\kappa$\+p-projective} if $P$ is a direct summand
of an object filtered by $\kappa$\+presentable objects in~$\sC$. 
 In the case of the cardinal $\kappa=\aleph_0$, the term
\emph{fp\+projective objects} is used~\cite[Section~2]{BHP}.

\begin{lem} \label{fp-projective-kernel-lemma}
 Let\/ $\sC$ be a locally $\kappa$\+presentable Grothendieck category
and\/ $0\rarrow P\rarrow D\rarrow E\rarrow0$ be a short exact sequence
in\/ $\sC$ with a $\kappa$\+p-projective object~$P$.
 Then\/ $0\rarrow P\rarrow D\rarrow E\rarrow0$ is an admissible short
exact sequence in the maximal locally $\kappa$\+coherent exact
structure on\/~$\sC$.
\end{lem}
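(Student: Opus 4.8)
Write $q\:D\rarrow E$ for the epimorphism in the given short exact sequence. I would prove directly that $q$ is an admissible epimorphism in the maximal locally $\kappa$\+coherent exact structure on~$\sC$, by checking the criterion of Corollary~\ref{maximal-admissible-epimorphisms-cor}. This suffices: the kernel of an admissible epimorphism in the exact category is computed as its categorical kernel in~$\sC$, namely~$P$, so the admissible short exact sequence attached to~$q$ is exactly $0\rarrow P\rarrow D\rarrow E\rarrow0$.

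The first step is a reduction to the case where $P$ is \emph{filtered} by $\kappa$\+presentable objects, not merely a direct summand of such an object. Writing $\overline P=P\oplus P'$ with $\overline P$ filtered by $\kappa$\+presentable objects, the sequence $0\rarrow\overline P\rarrow D\oplus P'\rarrow E\rarrow0$ is exact in~$\sC$ and, as an object of $\bC^3(\sC)$, is the direct sum of $0\rarrow P\rarrow D\rarrow E\rarrow0$ with the split short exact sequence $0\rarrow P'\overset{\id}\rarrow P'\rarrow0\rarrow0$. Since direct summands of admissible short exact sequences are admissible in any exact category (cf.\ the proof of Lemma~\ref{idempotent-completion-of-exact-category}), it is enough to treat~$\overline P$. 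So from now on $P=F_\alpha$ comes with a smooth $\alpha$\+indexed filtration $(F_\beta)_{0\le\beta\le\alpha}$ by subobjects of~$P$, with $F_0=0$, monomorphisms $F_\beta\hookrightarrow F_{\beta+1}$, and $\kappa$\+presentable cokernels~$U_\beta$.

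Next, fix $S\in\sC_{<\kappa}$ and a morphism $g\:S\rarrow E$; the goal is the square in Corollary~\ref{maximal-admissible-epimorphisms-cor}(2). Form the pullback $K=D\times_E S$ in~$\sC$; then $\pi\:K\rarrow S$ is an epimorphism with kernel~$P$, and adjoining one further step $F_{\alpha+1}:=K$, with cokernel $K/P\cong S$, to the filtration of~$P$ exhibits~$K$ as filtered by the $\kappa$\+presentable objects $\{U_\beta\}_{0\le\beta<\alpha}\cup\{S\}$. I would now invoke the Hill lemma~\cite[Theorem~2.1]{Sto0} (restated as~\cite[Propositions~2.6 and~6.7]{Pgen}), applied to this filtration of~$K$, to obtain the associated family~$\mathcal H$ of subobjects of~$K$ with the following standard properties: $\mathcal H$ is closed under arbitrary sums and intersections; every $\kappa$\+generated subobject of~$K$ is contained in a member of~$\mathcal H$ that is itself filtered by fewer than~$\kappa$ of the cokernels; and any member of~$\mathcal H$ contained in a member of the latter sort is again filtered by fewer than~$\kappa$ of the cokernels. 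A member of~$\mathcal H$ of this last sort is $\kappa$\+presentable, since $\sC_{<\kappa}$ is closed under extensions~\cite[Lemma~A.4]{Sto0} and under $\kappa$\+small colimits~\cite[Proposition~1.16]{AR}. Now $K/P\cong S$ is $\kappa$\+generated and $K$ is the directed union of its $\kappa$\+generated subobjects, so there is a $\kappa$\+generated subobject $X\subseteq K$ with $X+P=K$; pick a $\kappa$\+presentable member $T\in\mathcal H$ containing~$X$. Then $T+P=K$, so $\pi|_T\:T\rarrow S$ is an epimorphism; moreover $T\cap P\in\mathcal H$ is contained in the $\kappa$\+presentable member~$T$, hence $T\cap P$ is $\kappa$\+presentable, and $\ker(\pi|_T)=T\cap P$ lies in~$\sC_{<\kappa}$. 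Thus the commutative square with top row $q\:D\rarrow E$, bottom row $\pi|_T\:T\rarrow S$, right edge~$g$, and left edge the composition $T\hookrightarrow K\rarrow D$ is the one required, which completes the verification that $q$ is an admissible epimorphism.

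The step I expect to be the main obstacle is precisely this application of the Hill lemma: one must produce a single member~$T$ of the Hill family that is at once $\kappa$\+presentable, surjective onto the quotient~$S$, and meets~$P$ in a $\kappa$\+presentable subobject, and it is the closure of~$\mathcal H$ under sums and intersections together with its ``local $\kappa$\+smallness'' that makes this possible. It is also the point at which the Grothendieck hypothesis is used: the Hill lemma is available for locally $\kappa$\+presentable Grothendieck categories, but not in this form for exact categories of Grothendieck type (compare the Remark following Theorem~\ref{first-periodicity-theorem}).
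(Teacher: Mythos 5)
Your proposal is correct and follows essentially the same route as the paper's own proof: the same reduction from a direct summand to an actually filtered object $P$, the same verification of the criterion of Corollary~\ref{maximal-admissible-epimorphisms-cor} via the pullback $D\times_E S$, the same extension of the filtration of $P$ by the single top quotient $S$, and the same appeal to the Hill lemma to extract the $\kappa$\+presentable subobject $T$. The only difference is that you spell out the Hill-lemma bookkeeping (choosing $T\supseteq X$ with $T+P=K$ and $T\cap P\in\sC_{<\kappa}$) that the paper leaves implicit.
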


\begin{proof}
 Let $Q\in\sC$ be an object that the object $P\oplus Q$ is filtered
by $\kappa$\+presentable ones.
 Consider the short sequence $0\rarrow P\oplus Q\rarrow D\oplus Q
\rarrow E\rarrow0$ constructed as the direct sum of the short
sequences $0\rarrow P\rarrow D\rarrow E\rarrow0$ and $0\rarrow Q
\rarrow Q\rarrow0\rarrow0$.
 If the sequence $0\rarrow P\oplus Q\rarrow D\oplus Q\rarrow E\rarrow0$
is admissible exact, then so is the sequence $0\rarrow P\rarrow D
\rarrow E\rarrow0$ (by the pushout axiom).
 Hence we can assume without loss of generality that the object $P$
is filtered by $\kappa$\+presentable objects.

 It suffices to show that the morphism $D\rarrow E$ is an admissible
epimorphism in the maximal locally $\kappa$\+coherent exact structure
on~$\sC$.
 For this purpose, we will apply the criterion provided by
Corollary~\ref{maximal-admissible-epimorphisms-cor}.
 Given a $\kappa$\+presentable object $S$ and a morphism $S\rarrow E$
in~$\sC$, we need to construct a commutative square diagram
$$
 \xymatrix{
  0 \ar[r] & P \ar@{>->}[r] & D \ar@{->>}[r] & E \ar[r] & 0 \\
  && T \ar@{..>}[u] \ar@{..>>}[r] & S \ar[u]
 }
$$
with an epimorphism $T\rarrow S$ such that both $T$ and
$\ker(T\twoheadrightarrow S)$ are $\kappa$\+presentable objects
in~$\sC$.

 Denote by $H$ the pullback of the pair of morphisms $D\rarrow E$
and $S\rarrow E$ in~$\sC$.
 Then we have a short exact sequence $0\rarrow P\rarrow H
\rarrow S\rarrow0$ in~$\sC$.
 A given filtration of the object $P$ by $\kappa$\+presentable
objects then can be extended to such a filtration of the object $H$
by adding a single top quotient object~$S$ (since the object $S$ is
$\kappa$\+presentable).
 Applying the Hill lemma~\cite[Theorem~2.1]{Sto0}, the desired
$\kappa$\+presentable object $T$ can be now found as
a suitable subobject in~$H$.
\end{proof}

\begin{cor} \label{fp-projective-filtrations-cor}
 Let\/ $\sC$ be a locally $\kappa$\+presentable Grothendieck category.
 Then the class of all objects of\/ $\sC$ filtered by
$\kappa$\+presentable objects in the abelian exact structure on\/ $\sC$
coincides with the class of all objects of\/ $\sC$ filtered by
$\kappa$\+presentable objects in the maximal locally $\kappa$\+coherent
exact structure on\/~$\sC$.
\end{cor}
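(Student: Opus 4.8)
The plan is to establish the two inclusions of classes separately, using that the maximal locally $\kappa$\+coherent exact structure on $\sC$ is contained in the abelian exact structure. Indeed, by Theorem~\ref{lc-exact-structures-bijective-correspondence} together with Lemma~\ref{maximal-on-kappa-presentables}, the admissible short exact sequences of the maximal structure are the $\kappa$\+directed colimits of short exact sequences of $\kappa$\+presentable objects, and such colimits are exact in the locally $\kappa$\+presentable abelian category~$\sC$ \cite[Proposition~1.59]{AR}; so every admissible short exact sequence of the maximal structure is a short exact sequence in~$\sC$, every admissible monomorphism there is a monomorphism in~$\sC$, and the third term of an admissible short exact sequence of the maximal structure is the cokernel computed in~$\sC$.

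Granting this, one inclusion is immediate: a filtration $(F_\beta)_{0\le\beta\le\alpha}$ in the maximal locally $\kappa$\+coherent exact structure is, verbatim, also a filtration in the abelian exact structure on~$\sC$ --- smoothness and the requirement $F_0=0$ refer to~$\sC$ alone, the steps become abelian monomorphisms, and the successive quotients are unchanged --- so any object filtered by $\kappa$\+presentable objects in the maximal structure is filtered by $\kappa$\+presentable objects in the abelian structure. For the reverse inclusion I would take a filtration $(F_\beta)_{0\le\beta\le\alpha}$ of $F=F_\alpha$ by $\kappa$\+presentable objects in the abelian exact structure and check that the same chain is a filtration in the maximal locally $\kappa$\+coherent exact structure. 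Since smoothness and $F_0=0$ again transfer for free, the whole task reduces to showing that each $F_\beta\rarrow F_{\beta+1}$ is an admissible monomorphism in the maximal structure with cokernel $\coker(F_\beta\to F_{\beta+1})$.

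The one point that needs an idea --- and the main (if modest) obstacle --- is the following observation: the truncated chain $(F_\gamma)_{0\le\gamma\le\beta}$ is itself a filtration of $F_\beta$ by $\kappa$\+presentable objects, hence $F_\beta$ is $\kappa$\+p-projective, so Lemma~\ref{fp-projective-kernel-lemma} applies to the short exact sequence $0\rarrow F_\beta\rarrow F_{\beta+1}\rarrow\coker(F_\beta\to F_{\beta+1})\rarrow0$ in~$\sC$ (with $P=F_\beta$) and shows it to be admissible exact in the maximal locally $\kappa$\+coherent exact structure. Consequently each step of the chain is an admissible monomorphism of the maximal structure with the desired $\kappa$\+presentable cokernel, and, together with the smoothness established above, this exhibits $F$ as filtered by $\kappa$\+presentable objects in the maximal locally $\kappa$\+coherent exact structure, completing the argument. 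All the weight is thus carried by Lemma~\ref{fp-projective-kernel-lemma}; the remainder is routine bookkeeping about filtrations, the only subtlety being that "being filtered" is inherited by truncating the filtration, which lets the lemma be invoked at every successor step.
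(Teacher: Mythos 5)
Your proof is correct and follows essentially the same route as the paper: the whole weight is carried by Lemma~\ref{fp-projective-kernel-lemma} applied at each successor step of an abelian filtration (the paper phrases this as an induction, while you note that the truncated abelian filtration already makes $F_\beta$ \,$\kappa$\+p-projective, which is the same idea). The easy inclusion, via exactness of $\kappa$\+directed colimits, is handled as the paper implicitly does.
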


\begin{proof}
 It follows from Lemma~\ref{fp-projective-kernel-lemma} that
any filtration by $\kappa$\+presentable objects in the abelian exact
structure on $\sC$ is also a filtration in the maximal locally
$\kappa$\+coherent exact structure on~$\sC$.
\end{proof}

\begin{thm} \label{mlc-periodicity-theorem}
 Let\/ $\sC$ be a locally finitely presentable abelian category, and
let\/ $0\rarrow C\rarrow P\rarrow C\rarrow0$ be an admissible short
exact sequence in the maximal locally coherent exact structure
on\/~$\sC$.
 Assume that the object $P$ is fp\+projective.
 Then the object $C$ is fp\+projective as well.
\end{thm}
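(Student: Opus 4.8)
The plan is to deduce this directly from Theorem~\ref{first-periodicity-theorem}, applied to the exact category obtained by endowing $\sC$ with its maximal locally coherent exact structure. First I would note that, since $\sC$ is locally finitely presentable abelian, it is in particular a finitely accessible additive category, so by the discussion in Section~\ref{example-maximal-subsecn} it carries the maximal locally coherent exact structure; write $\sC'$ for $\sC$ regarded as this exact category. By construction $\sC'$ is a locally coherent exact category, so Theorem~\ref{first-periodicity-theorem} is applicable to it.

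The next step is to identify the class $\sP = \overline{\Fil(\sC'_\fp)}$ associated to $\sC'$ in Theorem~\ref{first-periodicity-theorem} with the class of fp\+projective objects of~$\sC$. The full subcategory of finitely presentable objects of $\sC'$ is the same full subcategory $\sC_\fp\subset\sC$ as for the abelian category $\sC$ (finite presentability of an object does not depend on the exact structure), equipped with the exact structure inherited from $\sC'$, which by Lemma~\ref{maximal-on-kappa-presentables} is the maximal exact structure on $\sC_\fp$. The class $\sP$ is defined using filtrations computed in the exact category $\sC'$; but a locally finitely presentable abelian category is automatically Grothendieck, so Corollary~\ref{fp-projective-filtrations-cor} applies (with $\kappa=\aleph_0$) and tells us that an object of $\sC$ is filtered by finitely presentable objects in the maximal locally coherent exact structure if and only if it is filtered by finitely presentable objects in the abelian exact structure. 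Hence $\sP$ coincides with the class of all direct summands of objects filtered by finitely presentable objects in the abelian category~$\sC$, which is exactly the class of fp\+projective objects of $\sC$ as defined at the beginning of this section.

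It remains to run the conclusion. By hypothesis $0\rarrow C\rarrow P\rarrow C\rarrow0$ is an admissible short exact sequence in $\sC'$, and $P\in\sP$ since $P$ is fp\+projective. Theorem~\ref{first-periodicity-theorem}, applied to the locally coherent exact category $\sC'$, then yields $C\in\sP$, that is, $C$ is fp\+projective, which is the desired statement.

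There is no genuine obstacle here: the whole proof is a bookkeeping reduction, and all the substance already resides in Theorem~\ref{first-periodicity-theorem} and in Corollary~\ref{fp-projective-filtrations-cor}. The single point that deserves a moment's care is the identification of $\sP$ with the class of fp\+projective objects of $\sC$ — one must be certain that replacing the abelian exact structure by the (in general strictly smaller) maximal locally coherent exact structure neither enlarges nor shrinks the class of objects admitting a filtration by finitely presentable subquotients, and this is precisely what Corollary~\ref{fp-projective-filtrations-cor}, resting in turn on Lemma~\ref{fp-projective-kernel-lemma} and the Hill lemma, guarantees.
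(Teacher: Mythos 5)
Your proposal is correct and follows exactly the paper's own (very brief) proof: apply Theorem~\ref{first-periodicity-theorem} to the maximal locally coherent exact structure on $\sC$ and invoke Corollary~\ref{fp-projective-filtrations-cor} with $\kappa=\aleph_0$ to identify $\overline{\Fil(\sC_\fp)}$ with the fp\+projective objects. You merely spell out the bookkeeping in more detail, and the one point you flag as needing care is precisely the point the paper delegates to that corollary.
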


\begin{proof}
 Apply Theorem~\ref{first-periodicity-theorem} to the maximal locally
coherent exact structure on $\sC$, and take
Corollary~\ref{fp-projective-filtrations-cor} (for $\kappa=\aleph_0$)
into account.
\end{proof}

\begin{ex} \label{mlc-periodicity-module-example}
 In particular, let $R$ be an arbitrary ring and $0\rarrow C\rarrow P
\rarrow C\rarrow0$ be an admissible short exact sequence in the maximal
locally coherent exact structure on the category $\sC=\Modr R$.
 Theorem~\ref{mlc-periodicity-theorem} tells us that if the $R$\+module
$P$ is fp\+projective, then so is the $R$\+module~$C$.
\end{ex}

 Notice that the assertions of Theorem~\ref{mlc-periodicity-theorem}
and Example~\ref{mlc-periodicity-module-example} are certainly
\emph{not} true with the assumption of admissible exactness in
the maximal locally coherent exact structure replaced by exactness
in the abelian category~$\sC$ (unless one assumes local coherence,
which makes the two exact structures coincide).
 This is explained in~\cite[Corollary~4.6(2)\,$\Rightarrow$\,(4)]{BHP}.

\begin{rem}
 The proof of Theorem~\ref{mlc-periodicity-theorem} via the proof
of Theorem~\ref{first-periodicity-theorem}, applied in the context
of Example~\ref{mlc-periodicity-module-example}, may appear to be
confusing, so let us provide some explanation.
 Given the module category $\sC=\Modr R$ with its maximal locally
coherent exact structure, we consider the full subcategory
$\sE=\Modrfp R\subset\Modr R$ of finitely presentable $R$\+modules,
endowed with the maximal exact structure, which coincides with
the exact structure inherited from the abelian exact structure of
$\Modr R$ (as per Lemma~\ref{maximal-on-kappa-presentables}).
 Then we consider the abelian category $\sK$ of all left exact
functors $\sE^\sop\rarrow\Ab$.
 What category is that?

 The category of contravariant functors $(\Modrfp R)^\sop\rarrow\Ab$
taking cokernels to kernels is naturally equivalent to $\sC=\Modr R$,
as one can easily see.
 Any functor $(\Modrfp R)^\sop\rarrow\Ab$ taking cokernels to kernels
has the form $S\longmapsto\Hom_R(S,M)$, where $M\in\Modr R$.
 So what is the difference between the categories $\sC$ and~$\sK$\,?
 The difference is that a functor $K\:(\Modrfp R)^\sop\rarrow\Ab$
belonging to the category $\sK$ \emph{need not} take cokernels to
kernels.
 The left exactness condition imposed on objects of the category $\sK$
is weaker, and only requires the functor $K$ to take cokernels
\emph{of injective morphisms of finitely presentable $R$\+modules}
to kernels in~$\Ab$.
 The difference manifests itself when the ring $R$ is not right
coherent.
\end{rem}

\appendix

\bigskip
\section*{Appendix.  Generalities on Accessible Categories}
%{\\ Comma-Categories with Equations, and Diagram Categories}
\medskip
\setcounter{section}{1}
\setcounter{thm}{0}

 Throughout this appendix, $\kappa$~is a regular cardinal.
 We use the book~\cite{AR} as the background reference source on
accessible categories.
 In particular, we refer to~\cite[Definition~1.4, Theorem~1.5,
Corollary~1.5, Definition~1.13(1), and Remark~1.21]{AR} for
an important discussion of \emph{$\kappa$\+directed posets}
vs.\ \emph{$\kappa$\+filtered small categories}, and accordingly,
$\kappa$\+directed vs.\ $\kappa$\+filtered colimits (for a regular
cardinal~$\kappa$).

 Let $\sA$ be a category with $\kappa$\+directed (equivalently,
$\kappa$\+filtered) colimits.
 An object $S\in\sA$ is said to be \emph{$\kappa$\+presentable}
if the functor $\Hom_\sA(S,{-})\:\sA\rarrow\Sets$ preserves
$\kappa$\+directed colimits.
 We will use the notation $\sA_{<\kappa}\subset\sA$ for the full
subcategory of all $\kappa$\+presentable objects of~$\sA$.
 Given a class of objects $\sS\subset\sA$, we denote by
$\varinjlim_{(\kappa)}\sS\subset\sA$ the class of all
$\kappa$\+directed colimits of objects from $\sS$ in~$\sA$.
 In the case of the cardinal $\kappa=\aleph_0$, we will use
the notation $\varinjlim\sS$ instead of $\varinjlim_{(\aleph_0)}\sS$.

 The category $\sA$ is said to be
\emph{$\kappa$\+accessible}~\cite[Definition~2.1]{AR} if
there is a set of $\kappa$\+presentable objects $\sS\subset\sA$
such that every object of $\sA$ is a $\kappa$\+directed colimit
of objects from~$\sS$, i.~e., $\sA=\varinjlim_{(\kappa)}\sS$.
 If this is the case, then the $\kappa$\+presentable objects of $\sA$
are precisely all the retracts of the objects from~$\sS$.

 In particular, $\aleph_0$\+presentable objects are known as 
\emph{finitely presentable}~\cite[Definition~1.1]{AR}, and
$\aleph_0$\+accessible categories are called \emph{finitely
accessible}~\cite[Remark~2.2(1)]{AR}.

 The following proposition is well-known.

\begin{prop} \label{accessible-subcategory}
 Let\/ $\sA$ be a $\kappa$\+accessible category and\/
$\sT\subset\sA_{<\kappa}$ be a set of $\kappa$\+presentable objects.
 Then the class of objects\/ $\varinjlim_{(\kappa)}\sT\subset\sA$
is closed under $\kappa$\+directed colimits in\/~$\sA$.
 An object $B\in\sA$ belongs to\/ $\varinjlim_{(\kappa)}\sT$ if and
only if, for every object $S\in\sA_{<\kappa}$, any morphism $S\rarrow B$
in\/ $\sA$ factorizes through an object from\/~$\sT$.

 The full subcategory\/ $\varinjlim_{(\kappa)}\sT\subset\sA$ is
$\kappa$\+accessible, and its $\kappa$\+presentable objects are
precisely those objects of\/ $\varinjlim_{(\kappa)}\sT$ that are
$\kappa$\+presentable in\/~$\sA$.
 The intersection\/ $\sA_{<\kappa}\cap\varinjlim\sT$ consists precisely
of all the retracts of objects from\/~$\sT$ in\/~$\sA$.

 Assume that the category\/ $\sA$ is additive and the set of objects\/
$\sT$ is closed under finite direct sums in\/~$\sA$.
 Then so is the class of objects\/ $\varinjlim_{(\kappa)}\sT$.
 When\/ $\kappa=\aleph_0$, the class of objects\/
$\varinjlim_{(\kappa)}\sT$ is closed under all coproducts in\/~$\sA$.
\end{prop}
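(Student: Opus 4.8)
Write $\sB=\varinjlim_{(\kappa)}\sT$ for brevity. The plan is to prove the factorization criterion first; the closure of $\sB$ under $\kappa$\+directed colimits and the remaining assertions then follow with little extra work.

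The implication ``$B\in\sB\ \Rightarrow$\ factorization property'' is immediate from the definition of $\kappa$\+presentability: if $B=\varinjlim_{i\in I}T_i$ with $I$ a $\kappa$\+directed poset and $T_i\in\sT$, then any morphism into $B$ from a $\kappa$\+presentable object of $\sA$ factorizes through one of the~$T_i$. For the converse I would use the standard fact (a direct consequence of $\kappa$\+accessibility; cf.~\cite[Section~2.1]{AR}) that for every object $B\in\sA$ the comma category $\sA_{<\kappa}\downarrow B$ is $\kappa$\+filtered and $B$ is the colimit of the forgetful functor $\sA_{<\kappa}\downarrow B\rarrow\sA$, \,$(S\to B)\mapsto S$ --- which one checks by factoring morphisms out of $\kappa$\+presentable objects through the terms of a fixed $\kappa$\+directed presentation of $B$ by objects of a generating set. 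Granting this, and assuming the factorization property for $B$, the point is that $B$ is \emph{also} the colimit of the forgetful functor $\sT\downarrow B\rarrow\sA$ and that $\sT\downarrow B$ is $\kappa$\+filtered; this gives $B\in\sB$. Both facts follow once one shows that the inclusion $\sT\downarrow B\hookrightarrow\sA_{<\kappa}\downarrow B$ is a final functor: nonemptiness of the relevant comma categories is exactly the factorization property, while their connectedness --- and, separately, the $\kappa$\+filteredness of $\sT\downarrow B$ --- is obtained by first forming a cocone in the ambient $\kappa$\+filtered category $\sA_{<\kappa}\downarrow B$ and then pushing its apex back into $\sT$ by one more application of the factorization property (here the fullness of $\sT\downarrow B$ in $\sA_{<\kappa}\downarrow B$ is used to reinterpret the resulting morphisms as morphisms of $\sT\downarrow B$). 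I expect this cofinality bookkeeping to be the one genuinely nontrivial step.

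Given the criterion, the closure of $\sB$ under $\kappa$\+directed colimits in $\sA$ is formal: a morphism from a $\kappa$\+presentable object into $\varinjlim_jB_j$, with all $B_j\in\sB$, factorizes through some~$B_j$ and hence, by the criterion applied to $B_j$, through an object of~$\sT$. Also, any retract in $\sA$ of an object $T\in\sT$ satisfies the criterion --- a morphism into the retract from a $\kappa$\+presentable object, followed by the section into $T$ and then the retraction, recovers the original morphism --- hence lies in $\sB$; so the essentially small set $\overline\sT$ of all such retracts is contained in $\sB$. Since the objects of $\overline\sT$ are $\kappa$\+presentable in $\sA$ and the inclusion $\sB\hookrightarrow\sA$ is full and preserves $\kappa$\+directed colimits, they are $\kappa$\+presentable in $\sB$; and every object of $\sB$ is a $\kappa$\+directed colimit of objects of $\sT\subseteq\overline\sT$. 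This exhibits $\sB$ as a $\kappa$\+accessible category. Finally, if $X\in\sB$ is $\kappa$\+presentable in $\sA$, then $\id_X$ factorizes through a term of a $\kappa$\+directed presentation $X=\varinjlim_iT_i$, so $X$ is a retract of some $T\in\sT$; together with the previous observation this identifies $\sB\cap\sA_{<\kappa}$ with $\overline\sT$ and shows that the $\kappa$\+presentable objects of $\sB$ are precisely its objects that are $\kappa$\+presentable in~$\sA$.

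For the additive assertions, assume $\sT$ closed under finite direct sums. For $B_1,B_2\in\sB$, writing $B_l=\varinjlim_{i\in I_l}T^l_i$ gives $B_1\oplus B_2=\varinjlim_{(i_1,i_2)\in I_1\times I_2}(T^1_{i_1}\oplus T^2_{i_2})$ over the $\kappa$\+directed poset $I_1\times I_2$ with all terms in $\sT$, so $\sB$ is closed under finite direct sums. When $\kappa=\aleph_0$, a coproduct $\coprod_{\alpha\in A}B_\alpha$ in $\sA$ is the directed colimit, over the finite subsets $F\subseteq A$, of the finite direct sums $\bigoplus_{\alpha\in F}B_\alpha$, each of which lies in $\sB$ by the previous sentence; hence $\coprod_{\alpha\in A}B_\alpha\in\sB$ by the closure under directed colimits already established.
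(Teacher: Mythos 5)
Your proof is correct and takes essentially the route the paper indicates: the paper's own proof merely cites \cite{Len}, \cite{CB}, \cite{Kra}, and \cite[Proposition~1.2]{Pacc}, while pointing out that the one nontrivial step is the ``if'' direction of the factorization criterion, to be proved by showing that the canonical diagram of morphisms from objects of $\sT$ into $B$ is $\kappa$\+filtered with colimit $B$ --- which is exactly the cofinality argument you carry out in detail. The derivation of the remaining assertions from the criterion matches the standard treatment the paper defers to.
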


\begin{proof}
 In the context of additive categories and $\kappa=\aleph_0$, this
result, going back to~\cite[Proposition~2.1]{Len}, can be found
in~\cite[Section~4.1]{CB} and~\cite[Proposition~5.11]{Kra}.
 (The terminology ``locally finitely presented categories'' was used
in~\cite{CB,Kra} for what we call finitely accessible categories.)

 The nontrivial part is to prove the ``if'' implication in the second
assertion: if any morphism $S\rarrow B$ factorizes through an object
from $\sT$, then $B\in\varinjlim_{(\kappa)}\sT$.
 Here one has to use the assumption that $B$ is a $\kappa$\+directed
colimit of objects from $\sA_{<\kappa}$ in $\sA$, and show that
the canonical diagram of morphisms into $B$ from objects of $\sT$ is
$\kappa$\+filtered and has $B$ as the colimit.
 We refer to~\cite[Proposition~1.2]{Pacc} for further details.
\end{proof}

\subsection{Comma-categories with equations}
\label{appendix-comma-equations-subsecn}
 We start with the following simple lemma, which will be used
in Section~\ref{appendix-diagram-categories-subsecn}.

\begin{lem} \label{product-accessible}
 Let $\kappa$ be a regular cardinal, $I$ be a set of the cardinality
smaller than~$\kappa$, and $(\sK_i)_{i\in I}$ be an $I$\+indexed
family of $\kappa$\+accessible categories.
 Then the Cartesian product\/ $\sK=\prod_{i\in I}\sK_i$ is
a $\kappa$\+accessible category again.
 The $\kappa$\+presentable objects of\/ $\sK$ are precisely all
the collections of objects $(S_i\in\sK_i)_{i\in I}$ where the object
$S_i$ is $\kappa$\+presentable in\/ $\sK_i$ for every $i\in I$.
\end{lem}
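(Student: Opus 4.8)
The plan is to reduce the lemma to two facts about the category $\Sets$ of sets: that a $\kappa$\+directed colimit in a product category is computed coordinatewise, and that $\kappa$\+directed colimits commute with $\kappa$\+small limits in $\Sets$ — in particular with products indexed by a set of cardinality less than~$\kappa$. (The latter is well known; the case of finite limits was already invoked in the proof of Proposition~\ref{simple-diagrams-accessible}.) First I would note that $\sK=\prod_{i\in I}\sK_i$ has $\kappa$\+directed colimits, computed in each coordinate separately, so there is nothing to check for the existence of colimits.

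Next I would prove the characterization of $\kappa$\+presentable objects. For the ``if'' part, let $(S_i)_{i\in I}$ be a tuple with each $S_i$ $\kappa$\+presentable in $\sK_i$, and let $(A_i)_{i\in I}=\varinjlim_\xi(A_{i,\xi})_{i\in I}$ be a $\kappa$\+directed colimit in $\sK$. Then $\Hom_\sK((S_i)_i,(A_i)_i)=\prod_{i\in I}\Hom_{\sK_i}(S_i,\varinjlim_\xi A_{i,\xi})=\prod_{i\in I}\varinjlim_\xi\Hom_{\sK_i}(S_i,A_{i,\xi})$, and since $|I|<\kappa$ this $\kappa$\+small product of $\kappa$\+directed colimits over a fixed poset equals $\varinjlim_\xi\prod_{i\in I}\Hom_{\sK_i}(S_i,A_{i,\xi})=\varinjlim_\xi\Hom_\sK((S_i)_i,(A_{i,\xi})_i)$, as required. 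For the ``only if'' part, fix $j\in I$ and a $\kappa$\+directed colimit $A_j=\varinjlim_{\xi\in\Xi}A_{j,\xi}$ in $\sK_j$; form the $\Xi$\+indexed diagram $B_\xi$ in $\sK$ whose $j$th coordinate is $A_{j,\xi}$ and whose coordinate $i\ne j$ is the constant object~$S_i$ (with identity transition maps). Its colimit is the tuple equal to $A_j$ in coordinate~$j$ and to $S_i$ in the other coordinates, a colimit of a constant diagram over a nonempty connected poset being the constant value. Now $\Hom_\sK((S_i)_i,B_\xi)=\Hom_{\sK_j}(S_j,A_{j,\xi})\times Q$, where $Q=\prod_{i\ne j}\Hom_{\sK_i}(S_i,S_i)$, the transition maps acting only on the first factor; taking the colimit over the directed poset $\Xi$ (which commutes with the product by the constant factor~$Q$) and using that $(S_i)_i$ is $\kappa$\+presentable, one finds that the canonical map $\bigl(\varinjlim_\xi\Hom_{\sK_j}(S_j,A_{j,\xi})\bigr)\times Q\rarrow\Hom_{\sK_j}(S_j,A_j)\times Q$ is a bijection, and it has the form $f\times\id_Q$. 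Since $Q\ne\varnothing$ (it contains $(\id_{S_i})_{i\ne j}$), the map $f=\bigl(\varinjlim_\xi\Hom_{\sK_j}(S_j,A_{j,\xi})\rarrow\Hom_{\sK_j}(S_j,A_j)\bigr)$ is a bijection, so $S_j$ is $\kappa$\+presentable.

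Finally, for $\kappa$\+accessibility of $\sK$, choose for each $i$ a set $\sS_i$ of $\kappa$\+presentable objects of $\sK_i$ such that every object of $\sK_i$ is a $\kappa$\+directed colimit of objects from $\sS_i$, and put $\sS=\prod_{i\in I}\sS_i$; by the characterization just proved, every object of $\sS$ is $\kappa$\+presentable in $\sK$. Given $(A_i)_{i\in I}\in\sK$, write $A_i=\varinjlim_{\eta\in\Xi_i}S_{i,\eta}$ with $\Xi_i$ a $\kappa$\+directed poset and all $S_{i,\eta}\in\sS_i$. Since $|I|<\kappa$ and $\kappa$ is regular, the product poset $\Xi=\prod_{i\in I}\Xi_i$ is again $\kappa$\+directed, and each projection $\Xi\rarrow\Xi_i$ is cofinal (a surjective monotone map between directed posets is a cofinal functor). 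Hence the $\Xi$\+indexed diagram $\xi=(\xi_i)_{i\in I}\longmapsto(S_{i,\xi_i})_{i\in I}$ in $\sK$ has colimit $\bigl(\varinjlim_\xi S_{i,\xi_i}\bigr)_{i\in I}=(A_i)_{i\in I}$, which exhibits $(A_i)_{i\in I}$ as a $\kappa$\+directed colimit of objects from $\sS$. This proves that $\sK$ is $\kappa$\+accessible. I expect the ``only if'' half of the characterization to be the only point requiring care — because of the need to carry along the possibly infinite constant factor $Q$ — while the rest is a routine combination of the coordinatewise computation of colimits with the commutation of $\kappa$\+directed colimits and $\kappa$\+small limits in $\Sets$.
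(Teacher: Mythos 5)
Your proof is correct. Note that the paper itself gives no argument for this lemma: it only points to~\cite[proof of Proposition~2.67]{AR} (flagging the erroneously omitted hypothesis $|I|<\kappa$ there) and to~\cite[Proposition~2.1]{Pacc} for details, so what you have written is a self-contained substitute for those references rather than a divergence from the paper's method. All three components check out: the ``if'' half is the standard commutation of $\kappa$\+directed colimits with $\kappa$\+small products in $\Sets$; in the ``only if'' half you correctly isolate the subtle point, namely that the constant factor $Q=\prod_{i\ne j}\Hom_{\sK_i}(S_i,S_i)$ must be carried along and is nonempty, so that bijectivity of $f\times\id_Q$ yields bijectivity of~$f$; and in the accessibility step the product poset $\prod_{i\in I}\Xi_i$ is indeed $\kappa$\+directed precisely because $|I|<\kappa$ and $\kappa$~is regular, with the surjective monotone projections cofinal. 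The only cosmetic remark is that you could say explicitly that a $\kappa$\+directed poset is nonempty and connected (which you use when computing the colimit of the constant coordinates), but this is immediate from the definition.
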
 

\begin{proof}
 This assertion can be found~\cite[proof of Proposition~2.67]{AR},
with the difference that the assumption that the cardinality of $I$
is smaller than~$\kappa$ is erroneously missing in~\cite{AR}.
 We refer to~\cite[Proposition~2.1]{Pacc} for the details.
\end{proof}

 Let $\sK$, $\sL$, and $\sM$ be three categories, and let
$F\:\sK\rarrow\sM$ and $G\:\sL\rarrow\sM$ be two functors.
 In this context, the \emph{comma-category} $\sC=F\downarrow\nobreak G$
is the category of triples $(K,L,\theta)$, where $K\in\sK$ and $L\in\sL$
are objects, while $\theta\:F(K)\rarrow G(L)$ is a morphism in~$\sM$.

 It is easy to see that if $\kappa$\+directed colimits exist in $\sK$,
$\sL$, and $\sM$, and are preserved by the functor $F$, then
$\kappa$\+directed colimits also exist in $\sC$, and are preserved by
the natural forgetful functors $\sC\rarrow\sK$ and $\sC\rarrow\sL$.

\begin{prop} \label{comma-accessible}
 Assume that the categories $\sK$, $\sL$, and\/ $\sM$ are
$\kappa$\+accessible, and the functors $F$ and $G$ preserve
$\kappa$\+directed colimits.
 Assume further that the functor $F$ takes $\kappa$\+presentable
objects to $\kappa$\+presentable objects.
 Then the category\/ $\sC$ is also $\kappa$\+accessible, and
the $\kappa$\+presentable objects of\/ $\sC$ are precisely all
the triples $(S,T,\sigma)\in\sC$ where the object $S$ is
$\kappa$\+presentable in\/ $\sK$ and the object $T$ is
$\kappa$\+presentable in\/~$\sL$.
\end{prop}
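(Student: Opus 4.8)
The plan is to produce an explicit set of $\kappa$\+presentable generators of $\sC$. By the remark preceding the proposition, $\kappa$\+directed colimits exist in $\sC$ and are preserved by the two forgetful functors $\sC\rarrow\sK$ and $\sC\rarrow\sL$; concretely they are computed componentwise, and since $F$ and $G$ preserve $\kappa$\+directed colimits such a colimit is also computed after applying $F$ and~$G$. Hence, by the definition of a $\kappa$\+accessible category, it suffices to exhibit a \emph{set}~$\sG$ of $\kappa$\+presentable objects of $\sC$ such that every object of $\sC$ is a $\kappa$\+directed colimit of objects from $\sG$; the description of $\sC_{<\kappa}$ will then follow from the standard fact that the $\kappa$\+presentable objects of a $\kappa$\+accessible category are exactly the retracts of the objects of any such generating set. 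I would take $\sG$ to be a set of representatives of the isomorphism classes of triples $(S,T,\sigma)$ with $S\in\sK_{<\kappa}$, $T\in\sL_{<\kappa}$, and $\sigma\:F(S)\rarrow G(T)$ a morphism in $\sM$; this is a set because $\sK_{<\kappa}$ and $\sL_{<\kappa}$ are essentially small.

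First I would check that every triple $(S,T,\sigma)$ as above is $\kappa$\+presentable in $\sC$. For a fixed $(S,T,\sigma)$, the set of morphisms to a variable object $(K,L,\theta)$ is the fibre product in $\Sets$ of the two maps $\Hom_\sK(S,K)\rarrow\Hom_\sM(F(S),G(L))\larrow\Hom_\sL(T,L)$ sending $f\mapsto\theta\circ F(f)$ and $g\mapsto G(g)\circ\sigma$. As $(K,L,\theta)$ ranges over a $\kappa$\+directed diagram (with colimit computed componentwise), $\Hom_\sK(S,{-})$ preserves the colimit because $S$ is $\kappa$\+presentable, $\Hom_\sL(T,{-})$ preserves it because $T$ is $\kappa$\+presentable, and $\Hom_\sM(F(S),G({-}))$ preserves it because $G$ preserves $\kappa$\+directed colimits while $F(S)$ is $\kappa$\+presentable in $\sM$ --- this is precisely where the hypothesis that $F$ carries $\kappa$\+presentable objects to $\kappa$\+presentable objects is used. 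Since finite limits commute with $\kappa$\+directed colimits in $\Sets$, the functor $\Hom_\sC((S,T,\sigma),{-})$ preserves $\kappa$\+directed colimits.

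Next I would represent an arbitrary $(K,L,\theta)\in\sC$ as a $\kappa$\+directed colimit of objects from $\sG$. Fix $\kappa$\+directed colimit presentations $K=\varinjlim_{i\in I}S_i$ and $L=\varinjlim_{j\in J}T_j$ with $S_i\in\sK_{<\kappa}$ and $T_j\in\sL_{<\kappa}$; then $F(K)=\varinjlim_i F(S_i)$ and $G(L)=\varinjlim_j G(T_j)$. Let $P$ be the poset of triples $(i,j,\sigma)$, where $\sigma\:F(S_i)\rarrow G(T_j)$ and the composite $F(S_i)\overset\sigma\rarrow G(T_j)\rarrow G(L)$ coincides with $F(S_i)\rarrow F(K)\overset\theta\rarrow G(L)$, ordered by $(i,j,\sigma)\le(i',j',\sigma')$ when $i\le i'$, $j\le j'$, and the evident square relating $\sigma$ and $\sigma'$ via the transition maps commutes. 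The rule $(i,j,\sigma)\mapsto(S_i,T_j,\sigma)$ is then a $P$\+indexed diagram valued in $\sG$, and I claim that $P$ is $\kappa$\+directed with colimit $(K,L,\theta)$. Given fewer than $\kappa$ elements of $P$, one picks an upper bound $i'$ for the first coordinates in $I$ and an upper bound for the second coordinates in $J$; since $F(S_{i'})$ is $\kappa$\+presentable and $G(L)=\varinjlim_j G(T_j)$, the map $F(S_{i'})\rarrow G(L)$ factors through some $G(T_{j'})$ (which we may take above the chosen upper bound), and, again invoking $\kappa$\+presentability of the various $F(S_{i_a})$, one enlarges $j'$ further so that the resulting $\sigma'$ becomes compatible with all the given data, producing a common upper bound. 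The projections $P\rarrow I$ and $P\rarrow J$ have cofinal image (for each $i$ the same factorization produces a triple $(i,j,\sigma)\in P$, and for each $j$ a triple $(i,j',\sigma)\in P$ with $j'\ge j$), so $\varinjlim_P S_i=K$ and $\varinjlim_P T_j=L$; as $\kappa$\+directed colimits in $\sC$ are componentwise and each $\sigma$ is chosen compatibly with $\theta$, we get $\varinjlim_P(S_i,T_j,\sigma)=(K,L,\theta)$. Finally, since the forgetful functors to $\sK$ and $\sL$ preserve retracts and retracts of $\kappa$\+presentable objects are $\kappa$\+presentable, a retract in $\sC$ of an object of $\sG$ has $\kappa$\+presentable components, so $\sC_{<\kappa}$ consists exactly of the triples with $\kappa$\+presentable components, as asserted.

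The main obstacle is the verification in the previous paragraph that $P$ is $\kappa$\+directed and computes the right colimit: one must combine the $\kappa$\+directed colimit presentations in the three categories with the single structure morphism $\theta$ in a coherent way, repeatedly factoring maps out of the $\kappa$\+presentable objects $F(S_i)$ through the objects $G(T_j)$ and equalizing competing factorizations at a later index. This is the only place where the hypothesis $F(\sK_{<\kappa})\subseteq\sM_{<\kappa}$ enters essentially, and the bookkeeping should be modeled on the proofs of the analogous statements in \cite{AR} and \cite{Pacc}.
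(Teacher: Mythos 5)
Your proposal is correct, and it is essentially the argument the paper invokes: the paper's proof consists of a citation to~\cite[proof of Theorem~2.43]{AR}, and what you have written out --- exhibiting the triples $(S,T,\sigma)$ with $\kappa$\+presentable components as a generating set of $\kappa$\+presentable objects via the fibre-product description of $\Hom_\sC$, and then presenting an arbitrary $(K,L,\theta)$ as a $\kappa$\+directed colimit over the poset of compatible triples --- is precisely the standard inserter/comma-category accessibility argument from that reference. Both the use of the hypothesis $F(\sK_{<\kappa})\subseteq\sM_{<\kappa}$ and the cofinality bookkeeping are in order.
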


\begin{proof}
 This is~\cite[proof of Theorem~2.43]{AR}.
 Notice that~\cite[Theorem~4.1]{Pacc} is \emph{not} applicable here,
as the latter theorem requires two cardinals $\lambda<\kappa$, which
we do not have (and do not need) in the context of this proposition.
\end{proof}

 Consider the comma-category $\sC=F\downarrow G$ as above, and
denote by $\Pi_\sK\:\sC\rarrow\sK$ and $\Pi_\sL\:\sC\rarrow\sL$
the two natural forgetful functors.
 Assume further that we are given a category $\sN$ and two functors
$P\:\sK\rarrow\sN$ and $Q\:\sL\rarrow\sN$.
 Consider the two compositions of functors $P\Pi_\sK\:\sC\rarrow\sN$
and $Q\Pi_\sL\:\sC\rarrow\sN$, and suppose given a pair of parallel
natural transformations $\phi$, $\psi\:P\Pi_\sK
\rightrightarrows Q\Pi_\sL$.

 Denote by $\sE\subset\sC$ the full subcategory consisting of all
the objects $C\in\sC$ such that $\phi_C=\psi_C$.
 We will call the category $\sE$ the \emph{comma-category with
equations}.

 Clearly, if $\kappa$\+directed colimits exist in the four categories
$\sK$, $\sL$, $\sM$, and $\sN$, and are preserved by the two functors
$F$ and $P$, then the full subcategory $\sE$ is closed under
$\kappa$\+directed colimits in~$\sC$.

\begin{thm} \label{comma-category-with-equations-theorem}
 Assume that the four categories\/ $\sK$, $\sL$, $\sM$, and\/ $\sN$ are
$\kappa$\+accessible, and the four functors $F$, $G$, $P$, $Q$ preserve
$\kappa$\+directed colimits.
 Assume further that the functors $F$ and $P$ take
$\kappa$\+presentable objects to $\kappa$\+presentable objects.
 Then the category\/ $\sE$ is also $\kappa$\+accessible, and
the $\kappa$\+presentable objects of\/ $\sE$ are precisely all
the triples $(U,V,\upsilon)\in\sE\subset\sC$ where the object $U$ is
$\kappa$\+presentable in\/ $\sK$ and the object $V$ is
$\kappa$\+presentable in\/~$\sL$.
\end{thm}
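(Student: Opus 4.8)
The plan is to reduce everything to Proposition~\ref{comma-accessible} and Proposition~\ref{accessible-subcategory}, exploiting the observation recorded just before the theorem that $\sE$ is closed under $\kappa$\+directed colimits inside the $\kappa$\+accessible category $\sC=F\downarrow G$. First I would let $\sT$ denote a set of representatives of the isomorphism classes of those objects $(S,T,\sigma)$ of $\sC$ which lie in $\sE$ and for which $S$ is $\kappa$\+presentable in $\sK$ and $T$ is $\kappa$\+presentable in $\sL$; this is a set because $\sK_{<\kappa}$ and $\sL_{<\kappa}$ are essentially small. By Proposition~\ref{comma-accessible} these objects are $\kappa$\+presentable in $\sC$, hence also $\kappa$\+presentable in $\sE$ (a full subcategory closed under $\kappa$\+directed colimits). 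It then suffices to establish the single equality $\sE=\varinjlim_{(\kappa)}\sT$ inside $\sC$: granting it, Proposition~\ref{accessible-subcategory} applied to $\sA=\sC$ and the set $\sT\subset\sC_{<\kappa}$ shows at once that $\sE$ is $\kappa$\+accessible and that its $\kappa$\+presentable objects are precisely those objects of $\sE$ that are $\kappa$\+presentable in $\sC$, which by Proposition~\ref{comma-accessible} are exactly the triples in $\sE$ with both the $\sK$\+ and the $\sL$\+component $\kappa$\+presentable.

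The inclusion $\varinjlim_{(\kappa)}\sT\subseteq\sE$ is immediate, since $\sE$ contains $\sT$ and is closed under $\kappa$\+directed colimits. For the reverse inclusion I would invoke the factorization criterion of Proposition~\ref{accessible-subcategory}: given $C=(A,B,\theta)\in\sE$ and an arbitrary morphism $X\rarrow C$ from a $\kappa$\+presentable object $X$ of $\sC$, one must factor it through an object of $\sT$. Write $X=(S_0,T_0,\sigma_0)$ with $S_0\in\sK_{<\kappa}$, $T_0\in\sL_{<\kappa}$ (Proposition~\ref{comma-accessible}), the morphism being given by $a\colon S_0\rarrow A$ and $b\colon T_0\rarrow B$. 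Naturality of $\phi$ and $\psi$ along $X\rarrow C$ together with $\phi_C=\psi_C$ yields $Q(b)\circ\phi_X=Q(b)\circ\psi_X$ in $\sN$. Now present $B$ as a $\kappa$\+directed colimit $\varinjlim_q T_q$ of $\kappa$\+presentable objects of $\sL$, factor $b$ through some structure map $\beta_{q_1}\colon T_{q_1}\rarrow B$, and — using that $P(S_0)$ is $\kappa$\+presentable in $\sN$ (this is where the hypothesis that $P$ preserves $\kappa$\+presentables enters) while $Q$ preserves the colimit $\varinjlim_q Q(T_q)=Q(B)$ — pass to some $q_2\ge q_1$ for which the two composites $P(S_0)\rightrightarrows Q(T_{q_1})\rarrow Q(T_{q_2})$ through $\phi_X$ and through $\psi_X$ already agree; denote by $b_2\colon T_0\rarrow T_{q_2}$ the induced map, so that $\beta_{q_2}\circ b_2=b$.

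Setting $\sigma'=G(b_2)\circ\sigma_0\colon F(S_0)\rarrow G(T_{q_2})$, I would then verify that $(\id_{S_0},b_2)$ is a morphism $X\rarrow(S_0,T_{q_2},\sigma')$ in $\sC$, and that $(S_0,T_{q_2},\sigma')$ in fact lies in $\sE$: naturality of $\phi$ and $\psi$ along this morphism identifies $\phi_{(S_0,T_{q_2},\sigma')}$ with $Q(b_2)\circ\phi_X$ and $\psi_{(S_0,T_{q_2},\sigma')}$ with $Q(b_2)\circ\psi_X$, and these coincide by construction. Since $S_0$ and $T_{q_2}$ are $\kappa$\+presentable, $(S_0,T_{q_2},\sigma')\in\sT$. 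Finally $(a,\beta_{q_2})$ is a morphism $(S_0,T_{q_2},\sigma')\rarrow C$ — the required compatibility reduces, via $\beta_{q_2}\circ b_2=b$ and $G(\beta_{q_2})\circ\sigma'=G(b)\circ\sigma_0$, to the compatibility of the original $X\rarrow C$ — and its composite with $(\id_{S_0},b_2)$ equals the given morphism $X\rarrow C$. This produces the desired factorization, so $C\in\varinjlim_{(\kappa)}\sT$, and the equality $\sE=\varinjlim_{(\kappa)}\sT$ follows.

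The main obstacle is exactly this last maneuver of \emph{forcing the equation} $\phi=\psi$ strictly: the $\kappa$\+presentable approximations of $C$ furnished by $\kappa$\+accessibility of $\sC$ need not satisfy $\phi=\psi$ on the nose, and one has to enlarge the data so as to land in $\sE$. What makes it work — and explains the asymmetric hypotheses, with $F$ and $P$, but not $G$ or $Q$, required to preserve $\kappa$\+presentable objects — is that it suffices to move the $\sL$\+component $T_0$ outward along a colimit while leaving $S_0$ fixed; the $\kappa$\+presentability of $P(S_0)$ in $\sN$ then detects the stage at which $\phi$ and $\psi$ become equal. Everything else (that $\sE$ is closed under $\kappa$\+directed colimits, that the composites and squares above commute) is routine diagram-checking.
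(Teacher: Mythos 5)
Your proposal is correct and follows essentially the same route as the paper: reduce to the factorization criterion of Proposition~\ref{accessible-subcategory} (via Proposition~\ref{comma-accessible}), then, keeping the $\sK$\+component fixed, push the $\sL$\+component outward along a $\kappa$\+directed presentation of the target's $\sL$\+component and use the $\kappa$\+presentability of $P(S_0)$ in $\sN$ to locate a stage where $\phi$ and $\psi$ already agree. Your explicit verification that the resulting object lies in $\sE$ and that the two triangles commute is exactly the routine checking the paper leaves implicit in its final sentence.
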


\begin{proof}
 In view of Propositions~\ref{accessible-subcategory}
and~\ref{comma-accessible}, it suffices to check that, for every
pair of objects $(K,L,\theta)\in\sE$ and $(S,T,\sigma)\in
\sC_{<\kappa}$, and any morphism $(f,g)\:(S,T,\sigma)\rarrow
(K,L,\theta)$ in\/ $\sC$, there exists an object
$(U,V,\upsilon)\in\sC_{<\kappa}\cap\sE$ such that the morphism $(f,g)$
factorizes through the object $(U,V,\upsilon)$ in the category~$\sC$.

 Indeed, let $L=\varinjlim_{\xi\in\Xi}V_\xi$ be a representation of
the object $L\in\sL$ as a $\kappa$\+directed colimit of
$\kappa$\+presentable objects $V_\xi$, indexed by some
$\kappa$\+directed poset~$\Xi$.
 Then, since the object $T\in\sL$ is $\kappa$\+presentable,
there exists an index $\xi_0\in\Xi$ such that the morphism
$g\:T\rarrow L$ factorizes through the morphism $V_\xi\rarrow L$.

 Since $(K,L,\theta)\in\sE$ and $(f,g)$ is a morphism in~$\sC$,
the two compositions
$$
 \xymatrix{
  P(S) \ar@<2pt>[rr]^{\phi_{(S,T,\sigma)}}
  \ar@<-2pt>[rr]_{\psi_{(S,T,\sigma)}}
  && Q(T) \ar[r] & Q(V_{\xi_0}) \ar[r] & Q(L)
 }
$$
are equal to each other in the category~$\sN$.
 As $Q(L)=\varinjlim_{\xi\in\Xi}Q(V_\xi)$ in $\sN$ and
$P(S)$ is a $\kappa$\+presentable object in $\sN$, it follows that
there exists an index $\xi_1\in\Xi$, \,$\xi_1\ge\xi_0$, such that
the two compositions
$$
 \xymatrix{
  P(S) \ar@<2pt>[rr]^{\phi_{(S,T,\sigma)}}
  \ar@<-2pt>[rr]_{\psi_{(S,T,\sigma)}}
  && Q(T) \ar[r] & Q(V_{\xi_0}) \ar[r] & Q(V_{\xi_1})
 }
$$
are equal to each other in~$\sN$.

 It remains to put $U=S$ and $V=V_{\xi_1}$, and let $\upsilon\:F(U)
\rarrow G(V)$ be the composition $F(S)\overset\sigma\rarrow G(T)
\rarrow G(V_{\xi_1})$ in the category~$\sM$.
\end{proof}

\subsection{Finitely presented rigid diagram categories}
\label{appendix-diagram-categories-subsecn}
 We start with restating a result going back to~\cite[Expos\'e~I]{SGA4}.
 Given a small category $\sD$ and a category $\sA$, we denote by
$\Fun(\sD,\sA)$ the category of functors $\sD\rarrow\sA$.

\begin{prop} \label{nonadditive-functors-prop}
 Let\/ $\sA$ be a $\kappa$\+accessible category and\/ $\sD$ be
a finite category with no nonidentity endomorphisms of objects.
 Then the category\/ $\Fun(\sD,\sA)$ is $\kappa$\+accessible.
 The $\kappa$\+presentable objects of\/ $\Fun(\sD,\sA)$ are
the functors\/ $\sD\rarrow\sA_{<\kappa}$.
\end{prop}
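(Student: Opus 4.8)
The plan is to first reduce $\sD$ to a convenient shape and then induct on the number of its objects, at each step stripping off a maximal object and recognizing $\Fun(\sD,\sA)$ as a comma-category with equations built from strictly smaller data, so that Theorem~\ref{comma-category-with-equations-theorem} and Lemma~\ref{product-accessible} apply.

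First I would replace $\sD$ by a skeleton; this changes $\Fun(\sD,\sA)$ only up to an equivalence preserving $\kappa$\+directed colimits and $\kappa$\+presentable objects, and a skeleton of $\sD$ is again finite with no nonidentity endomorphisms. A short verification shows that such a category is \emph{direct}: a directed cycle of morphisms would, upon composing around it, either produce a nonidentity endomorphism or split into isomorphisms between distinct objects, both of which are excluded; hence the relation ``there is a morphism from $a$ to $b$'' is a partial order on the finite object set, and the objects can be enumerated as $d_1,\dots,d_n$ so that every nonidentity morphism strictly increases the index. So it suffices to prove the assertion, by induction on~$n$, for $\sD$ of this form; for $n\le1$ the category $\Fun(\sD,\sA)$ is trivial or equal to $\sA$, and there is nothing to do.

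For the inductive step, put $d_0=d_n$, so that $\id_{d_0}$ is the only morphism out of $d_0$, and let $\sD^-\subset\sD$ be the full subcategory on $d_1,\dots,d_{n-1}$; by the induction hypothesis $\Fun(\sD^-,\sA)$ is $\kappa$\+accessible with $\kappa$\+presentable objects the functors $\sD^-\rarrow\sA_{<\kappa}$. Let $A$ be the finite set of all morphisms of $\sD$ with target $d_0$ and source in~$\sD^-$. To give a functor $F\:\sD\rarrow\sA$ is the same as to give a functor $F^-\in\Fun(\sD^-,\sA)$, an object $X=F(d_0)\in\sA$, and morphisms $x_\alpha\:F^-(\operatorname{dom}\alpha)\rarrow X$ for $\alpha\in A$, subject to the relations $x_\alpha\circ F^-(\beta)=x_{\alpha\beta}$ for every composable pair $\beta$ in $\sD^-$ and $\alpha\in A$. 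Forgetting the relations, the category of triples $(F^-,X,(x_\alpha))$ is the comma-category $\Pi\downarrow\Delta$ of the functor $\Pi\:\Fun(\sD^-,\sA)\rarrow\sA^A$, \,$F^-\mapsto(F^-(\operatorname{dom}\alpha))_{\alpha\in A}$, and the diagonal $\Delta\:\sA\rarrow\sA^A$; here $\sA^A$ is $\kappa$\+accessible by Lemma~\ref{product-accessible} since $|A|<\kappa$, both functors preserve $\kappa$\+directed colimits, and $\Pi$ preserves $\kappa$\+presentability because the evaluation functors do. Each relation, indexed by a composable pair $(\beta,\alpha)$, is then imposed by equating two natural transformations from the evaluation-type functor $(F^-,X)\mapsto(F^-(\operatorname{dom}(\alpha\beta)))$ to the diagonal $(F^-,X)\mapsto(X)$, valued in a product of finitely many copies of $\sA$, whose component at $(\beta,\alpha)$ is $x_\alpha\circ F^-(\beta)$ in one case and $x_{\alpha\beta}$ in the other. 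Applying Theorem~\ref{comma-category-with-equations-theorem} to this comma-category with equations identifies $\Fun(\sD,\sA)$ with a $\kappa$\+accessible category whose $\kappa$\+presentable objects are exactly the triples with $F^-$ $\kappa$\+presentable in $\Fun(\sD^-,\sA)$ and $X$ $\kappa$\+presentable in $\sA$ — that is, by the induction hypothesis, precisely the functors $F$ with $F(d_i)\in\sA_{<\kappa}$ for all~$i$, which completes the induction.

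The main obstacle will be the bookkeeping in the last step: writing out explicitly the auxiliary functors $P$, $Q$ and the parallel natural transformations $\phi$, $\psi$ of Theorem~\ref{comma-category-with-equations-theorem} in this setting, checking their naturality in the comma-category variable, and verifying that the ``domain side'' functors send $\kappa$\+presentable objects to $\kappa$\+presentable objects — which, via Lemma~\ref{product-accessible}, reduces to the fact that the evaluation functors $\Fun(\sD^-,\sA)\rarrow\sA$ preserve $\kappa$\+presentable objects, true by the induction hypothesis. The skeleton reduction, the acyclicity of the resulting category, and the base case are routine.
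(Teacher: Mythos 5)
Your proof is correct, but it is not the route the paper takes for this particular statement: the paper's own proof of Proposition~\ref{nonadditive-functors-prop} is a citation (to \cite[Expos\'e~I, Proposition~8.8.5]{SGA4}, \cite{Mey}, and \cite[Theorem~1.3]{Hen}), and the induction via Theorem~\ref{comma-category-with-equations-theorem} is only mentioned there as an alternative and is actually written out for the $k$\+linear sibling, Proposition~\ref{k-linear-functors-prop}. Your argument is essentially that written-out argument transported to the nonadditive setting, with two small variations. First, you peel off a single maximal object $d_n$ at each step, whereas the paper splits the object set into an arbitrary nonempty lower/upper partition; yours is the special case where the upper block is a singleton, which slightly simplifies the bookkeeping (the category $\sL$ is just $\sA$ and $G$ is a diagonal functor). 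Second, since a finite category is not literally ``finitely presented by generators and relations'' in the paper's rigid sense, you index the comma-category data by \emph{all} morphisms into $d_n$ and impose \emph{all} composition identities $x_\alpha\circ F^-(\beta)=x_{\alpha\beta}$ as equations; this is the right substitute and the sets involved are finite, so Lemma~\ref{product-accessible} applies. The preliminary reduction to a skeleton and the observation that a skeletal finite category without nonidentity endomorphisms is direct (so that a maximal object exists and admits no nonidentity outgoing morphisms) are correct and necessary for your decomposition to make sense. The points you flag as remaining bookkeeping --- naturality of $\phi$ and $\psi$ in the comma-category variable (which reduces to naturality of the transformation $F^-\to G^-$ together with the comma-square condition at the index $\alpha\beta\in A$), and the fact that the domain-side functors $\Pi$ and $P$ send $\kappa$\+presentables to $\kappa$\+presentables (from the inductive description of $\Fun(\sD^-,\sA)_{<\kappa}$ plus Lemma~\ref{product-accessible}) --- all check out, so the hypotheses of Theorem~\ref{comma-category-with-equations-theorem} are met and the induction closes. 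What your approach buys is a self-contained proof independent of the external references; what the paper's citation buys is brevity and, in the case of \cite{Hen}, a more general statement for arbitrary finite diagram shapes of this kind.
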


\begin{proof}
 In the case of $\kappa=\aleph_0$, this is the result
of~\cite[Expos\'e~I, Proposition~8.8.5]{SGA4} or~\cite[page~55]{Mey}.
 For an arbitrary regular cardinal~$\kappa$, the assertion of
the proposition is a particular case of~\cite[Theorem~1.3]{Hen}.
\end{proof}

 The aim of this section is to extend the result of
Proposition~\ref{nonadditive-functors-prop} from arbitrary nonadditive
to \emph{$k$\+linear} functors, where $k$~is a commutative ring.
 The related assertion is stated as
Proposition~\ref{k-linear-functors-prop} below.
 Both Propositions~\ref{nonadditive-functors-prop}
and~\ref{k-linear-functors-prop} are provable by induction on
the number of objects in the category $\sD$ using
Theorem~\ref{comma-category-with-equations-theorem} for
the induction step.
 We will spell out the proof of
Proposition~\ref{k-linear-functors-prop}, as this is the result
we use in the main body of this paper.

 A \emph{$k$\+linear category} $\sA$ is a category enriched in
$k$\+modules.
 So, for any two objects $A$ and $B\in\sA$, the set $\Hom_\sA(A,B)$
has a $k$\+module structure, and the composition maps are
$k$\+bilinear.
 Let us describe the class of small $k$\+linear categories $\sD$ to
which our result applies.

 Suppose given a totally ordered, finite set of objects~$\{a\}$.
 For every pair of objects $a<b$, suppose given a \emph{generating
set} of morphisms $\Gen(a,b)$.
 For $a\ge b$, put $\Gen(a,b)=\varnothing$.
 Then, just as in~\cite[Section~6]{Pacc}, one can construct
the $k$\+linear category $\sB$ on the given set of objects
\emph{freely generated} by the given sets of morphisms.
 For every pair of objects~$a$, $b$, the free $k$\+module
$\Hom_\sB(a,b)$ is spanned by the set of all formal compositions
$g_n\dotsm g_1$, \,$n\ge0$, where $g_i\in\Gen(c_i,c_{i+1})$,
\ $c_1=a$, \,$c_{n+1}=b$.
 Notice that one has $\Hom_\sB(a,a)=k$ for all objects~$a$,
and $\Hom_\sB(a,b)=0$ if $a>b$.

 Furthermore, suppose given a \emph{set of defining relations}
$\Rel(a,b)\subset\Hom_\sB(a,b)$ for every pair of objects $a<b$.
 Let $\sJ$ be the two-sided ideal of morphisms in $\sB$ generated
by all the sets $\Rel(a,b)$.
 Consider the $k$\+linear quotient category $\sD=\sB/\sJ$ of
the $k$\+linear category $\sA$ by the ideal of morphisms~$\sJ$.

 We will say that a $k$\+linear category $\sD$ is a \emph{finitely
presented rigid category} if it has the form $\sD=\sB/\sJ$ as per
the construction above, where the set of all objects~$a$,
the set of all generators $\coprod_{a,b}\Gen(a,b)$, and the set
of all relations $\coprod_{a,b}\Rel(a,b)$ are all finite.
 The word ``rigid'' here refers to the assumption that all
the generating morphisms go in one direction with respect to
a given total order on the set of objects.

 Given a small $k$\+linear category $\sD$ and a $k$\+linear
category $\sA$, we denote by $\Fun_k(\sD,\sA)$ the $k$\+linear
category of $k$\+linear functors $\sD\rarrow\sA$.
 The following proposition is a $k$\+linear version of
Proposition~\ref{nonadditive-functors-prop}.

\begin{prop} \label{k-linear-functors-prop}
 Let\/ $\sA$ be a $\kappa$\+accessible $k$\+linear category and\/
$\sD$ be a finitely presented rigid $k$\+linear category.
 Then the category\/ $\Fun_k(\sD,\sA)$ is $\kappa$\+accessible.
 The $\kappa$\+presentable objects of\/ $\Fun_k(\sD,\sA)$ are
the $k$\+linear functors\/ $\sD\rarrow\sA_{<\kappa}$.
\end{prop}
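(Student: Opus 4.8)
The plan is to argue by induction on the number $m$ of objects of the finitely presented rigid category $\sD=\sB/\sJ$, using a fixed total order $a_1<\dots<a_m$ on its objects as in the construction above, with Theorem~\ref{comma-category-with-equations-theorem} supplying the inductive step. For $m=0$ the category $\Fun_k(\sD,\sA)$ is terminal, hence $\kappa$\+accessible with its unique (trivially $\kappa$\+presentable) object, and the statement holds vacuously. For the inductive step I would peel off the top object $a_m$: let $\sD'\subset\sD$ be the full $k$\+linear subcategory on $a_1,\dots,a_{m-1}$. Since every generating morphism strictly increases the index, no formal composition of generators can pass through the sink $a_m$; this shows $\Hom_\sD(a_i,a_j)=\Hom_{\sD'}(a_i,a_j)$ for $i,j<m$ and that the relevant piece of $\sJ$ coincides with the corresponding ideal of the sub-presentation, so $\sD'$ is again a finitely presented rigid category (on $m-1$ objects), and the inductive hypothesis gives that $\Fun_k(\sD',\sA)$ is $\kappa$\+accessible with $\kappa$\+presentable objects the functors $\sD'\rarrow\sA_{<\kappa}$.

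Next I would identify $\Fun_k(\sD,\sA)$ with a comma-category with equations over the pair $\sK=\Fun_k(\sD',\sA)$ and $\sL=\sA$. A $k$\+linear functor $\sD\rarrow\sA$ amounts to a functor $X'\:\sD'\rarrow\sA$, an object $M\in\sA$ (its value on $a_m$), and morphisms $\theta_g\:X'(a_i)\rarrow M$ indexed by the finite set $\mathcal G=\coprod_{i<m}\Gen(a_i,a_m)$, subject to the defining relations that land in $a_m$. Accordingly I set $\sM=\prod_{g\in\mathcal G}\sA$ (which is $\kappa$\+accessible by Lemma~\ref{product-accessible}), let $F\:\sK\rarrow\sM$ send $X'$ to the family of its values on the sources of the generators, and let $G\:\sL\rarrow\sM$ be the diagonal. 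Both preserve $\kappa$\+directed colimits, and $F$ preserves $\kappa$\+presentability by the inductive hypothesis, so by Proposition~\ref{comma-accessible} the comma-category $\sC=F\downarrow G$ — whose objects are precisely the triples $(X',M,(\theta_g)_g)$ — is $\kappa$\+accessible, with $\kappa$\+presentable objects those for which $X'\in\sK_{<\kappa}$ and $M\in\sA_{<\kappa}$.

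To cut down to the relations, note that each $r\in\Rel(a_i,a_m)$ with $i<m$ may be written (again using that $a_m$ is a sink) as a $k$\+linear combination $\sum_j\lambda_j\,g_j w_j$ of formal composites with $g_j\in\mathcal G$ and $w_j$ a word in the generators among $a_1,\dots,a_{m-1}$, so the condition it imposes on $(X',M,\theta)$ is the equation $\sum_j\lambda_j\,\theta_{g_j}\circ X'(\overline{w_j})=0$ in $\Hom_\sA(X'(a_i),M)$. I would bundle these finitely many equations by taking $\sN=\prod_r\sA$, with $P\:\sK\rarrow\sN$ the corresponding product of evaluation functors (preserving $\kappa$\+directed colimits and $\kappa$\+presentability) and $Q\:\sL\rarrow\sN$ the diagonal, and $\phi,\psi\:P\Pi_\sK\rightrightarrows Q\Pi_\sL$ the pair of natural transformations whose $r$\+th components are the left-hand side above and $0$. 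The full subcategory $\sE\subset\sC$ on which $\phi$ and $\psi$ agree is then exactly $\Fun_k(\sD,\sA)$, and Theorem~\ref{comma-category-with-equations-theorem} shows $\sE$ is $\kappa$\+accessible with $\kappa$\+presentable objects those $(X',M,\theta)$ having $X'\in\sK_{<\kappa}$ and $M\in\sA_{<\kappa}$, i.e.\ (by induction) precisely the $k$\+linear functors $\sD\rarrow\sA_{<\kappa}$. This closes the induction. (Relations among endomorphisms of $a_m$ lie in $\Hom_\sB(a_m,a_m)=k\cdot\id_{a_m}$ and only constrain the object $M$; they are handled by the same device, or one may assume at the outset that there are none.)

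The step I expect to be the main obstacle is verifying that the prescribed components $\phi_C$ genuinely assemble into a natural transformation of functors on the comma-category $\sC$: this requires combining naturality of a morphism $X'\rarrow X''$ in $\Fun_k(\sD',\sA)$ with respect to the morphisms $\overline{w_j}$ of $\sD'$ with the compatibility square built into the definition of a morphism in $\sC$. The remaining points — that peeling off the top object of a finitely presented rigid category yields another one, and that the diagonal and evaluation functors have the required colimit- and presentability-preservation properties — are routine but order-sensitive bookkeeping.
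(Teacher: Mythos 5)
Your proof is correct and follows essentially the same route as the paper's: induction on the number of objects of $\sD$, realizing $\Fun_k(\sD,\sA)$ as a comma-category with equations built from the functor categories of the two pieces (with $\sM$ and $\sN$ products of copies of $\sA$ indexed by the crossing generators and relations, $\psi=0$), and applying Theorem~\ref{comma-category-with-equations-theorem} together with Lemma~\ref{product-accessible}. The only difference is that you always peel off the single top object $a_m$, whereas the paper splits the objects into two arbitrary nonempty groups; your argument is an inessential specialization of the same inductive step.
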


\begin{proof}
 The argument proceeds by induction on the number of objects in~$\sD$.
 If the set of objects of $\sD$ is a singleton, then one has
$\Fun_k(\sD,\sA)\simeq\sA$, and there is nothing to prove.
 Otherwise, we separate all the objects of $\sD$ into two nonempty
groups, the upper and the lower one, so that one has $a'<a''$ for
any object~$a'$ from the lower group and any object~$a''$ from
the upper one.
 Let $\sD'$ and $\sD''\subset\sD$ be the full subcategories on
the lower and the upper groups of objects in $\sD$, respectively.
 Then both $\sD'$ and $\sD''$ are also finitely presented rigid
categories; indeed, the sets of generators and relations of $\sD'$
are just the sets of all generators and relations of $\sD$ on objects
from $\sD'$, and similarly for~$\sD''$.

 The claim is that the category $\sE=\Fun(\sD,\sA)$ can be produced
from the categories $\sK=\Fun(\sD',\sA)$ and $\sL=\Fun(\sD'',\sA)$
by the construction of the comma-category with equations from
Section~\ref{appendix-comma-equations-subsecn}.
 Specifically, let $\sM$ be the product of copies of the category $\sA$
indexed by the finite set $\coprod_{a'\in\sD',a''\in\sD''}\Gen(a',a'')$.
 Let $F\:\sK\rarrow\sM$ be the functor assigning to a functor
$A'\:\sD'\rarrow\sA$ the collection of objects whose component indexed
by a generator $g\in\Gen(a',a'')$ is the object $A'(a')\in\sA$.
 Similarly, let $G\:\sL\rarrow\sM$ be the functor assigning to a functor
$A''\:\sD''\rarrow\sA$ the collection of objects whose component indexed
by a generator $g\in\Gen(a',a'')$ is the object $A''(a'')\in\sA$.
 Then the comma-category $\sC=F\downarrow G$ is naturally equivalent
to the category of functors $\Fun_k(\widetilde\sB,\sA)$, where
$\widetilde\sB$ is the finitely presented rigid category whose objects
are all the objects of $\sD$, generating morphisms are all
the generating morphisms of $\sD$, and the set of defining relations is
$\coprod_{a',b'\in\sD'}\Rel(a',b')\sqcup\coprod_{a'',b''\in\sD''}
\Rel(a'',b'')$.

 Finally, let $\sN$ be the product of copies of the category $\sA$
indexed by the finite set $\coprod_{a'\in\sD',a''\in\sD''}\Rel(a',a'')$.
 Let $P\:\sK\rarrow\sN$ be the functor assigning to a functor
$A'\:\sD'\rarrow\sA$ the collection of objects whose component indexed
by a relation $r\in\Rel(a',a'')$ is the object $A'(a')\in\sA$.
 Similarly, let $Q\:\sL\rarrow\sN$ be the functor assigning to a functor
$A''\:\sD''\rarrow\sA$ the collection of objects whose component indexed
by a relation $r\in\Rel(a',a'')$ is the object $A''(a'')\in\sA$.
 Let $\phi\:P\Pi_\sK\rarrow Q\Pi_\sL$ be the natural transformation
assigning to a functor $A\in\sC=\Fun_k(\widetilde\sB,\sA)$ the morphism
in the category $\sN$ whose component indexed by a relation
$r\in\Rel(a',a'')$ is the morphism $A(r)\:A(a')\rarrow A(a'')$ for
all $a'\in\sD'$, \,$a''\in\sD''$.
 Let $\psi\:P\Pi_\sK\rarrow Q\Pi_\sL$ be the zero natural
transformation.
 Then the comma-category with relations $\sE$ is naturally equivalent
to the category of functors $\Fun_k(\sD,\sA)$.

 The induction assumption tells us that the categories $\sK$ and $\sL$
are $\kappa$\+accessible, and provides descriptions of their full
subcategories of $\kappa$\+presentable objects.
 Lemma~\ref{product-accessible} tells us that the categories $\sM$ and
$\sN$ are $\kappa$\+accessible, and provides descriptions of their
full subcategories of $\kappa$\+presentable objects.
 So Theorem~\ref{comma-category-with-equations-theorem} is applicable,
proving that the category $\sE=\Fun_k(\sD,\sA)$ is $\kappa$\+accessible,
and providing the desired description of its full subcategory of
$\kappa$\+presentable objects.
\end{proof}

\bigskip

\end{document}